\newtheorem{theorem}{Theorem}[section]
\newtheorem{lemma}[theorem]{Lemma}
\newtheorem{proposition}[theorem]{Proposition}
\newtheorem{corollary}[theorem]{Corollary}
\theoremstyle{definition}
\newtheorem{definition}[theorem]{Definition}
\theoremstyle{remark}
\newtheorem{remark}[theorem]{Remark}
\renewcommand{\vec}[1]{\boldsymbol{#1}}
\numberwithin{equation}{section}
\renewcommand{\epsilon}{\eps}
\newcommand{\N}{{\mathbb N}}
\newcommand{\R}{{\mathbb R}}
\newcommand{\dvg}{{\rm div}}
\newcommand{\eps}{\varepsilon}
\newcommand{\pnorm}[2][]{\if #1'' \left|#2\right|_p \else \left|#2\right|_{#1} \fi}
\newcommand{\loc}{{\rm loc}}
\renewcommand{\theta}{\vartheta}
\numberwithin{equation}{section}
\begin{document}

\title[Capacity \& Perimeter from $\alpha$-Hermite Bounded Variation]
{Capacity \& Perimeter from $\alpha$-Hermite Bounded Variation}


\author[Jizheng Huang]{Jizheng Huang}
\address{ School of Science, Beijing University of Posts and Telecommunications, Beijing 100876, P.R.China. }

\email{hjzheng@163.com}


\author[Pengtao Li]{Pengtao\ Li}
\address{College of Mathematics, Qingdao University, Qingdao, Shandong 266071, China}

\email{ptli@qdu.edu.cn}

\author{Yu Liu}
\address{School of Mathematics and Physics, University of Science and Technology Beijing, Beijing 100083, China}
\email{liuyu75@pku.org.cn}


\subjclass[2000]{Primary 42B35, 47A60, 32U20}

\date{}

\dedicatory{}

\keywords{$\alpha$-Hermite bounded variation, capacity, perimeter.}

\begin{abstract}
 Let $\mathcal{H}_{\alpha}=\Delta-(\alpha-1)|x|^{\alpha}$ be an $[1,\infty)\ni\alpha$-Hermite operator for the hydrogen atom located at the origin in $\mathbb R^d$.
 In this paper, we are motivated by the classical case $\alpha=1$ to investigate the space of functions with $\alpha$-{\it Hermite Bounded Variation} and its functional capacity and geometrical perimeter.
\end{abstract}

\maketitle


 \vspace{0.1in}

\tableofcontents \pagenumbering{arabic}

\section*{Introduction}\label{s0}

A function of bounded variation, simply a
BV-function, is a real-valued function whose total variation is
finite.  In the multi-variable setting, a function defined on an
open subset $\Omega\subseteq \mathbb R^{d}, d\geq2$, is said to have bounded
variation provided that its distributional derivative is a
vector-valued finite Radon measure over the subset $\Omega$. Precisely,
\begin{definition}
A function $u\in L^{1}(\Omega)$ whose partial derivatives in the
sense of distributions are measures with finite total variation
$\|Du\|$ in $\Omega$ is called a function of bounded variation,
where
\begin{eqnarray*}
\|Du\|:=\sup\Big\{\int_{\Omega}u\text{ div }\nu dx:\
\nu=(\nu_{1},\ldots, \nu_{d})\in C^{\infty}_{0}(\Omega; \mathbb
R^{d}), |\nu(x)|\leq 1, x\in\Omega\Big\}<\infty.
\end{eqnarray*}
The class of all such functions will be denoted by $BV(\Omega)$. The norm of $BV(\Omega)$ is defined as
$$\|u\|_{BV}:=\|u\|_{L^{1}(\Omega)}+\|Du\|.$$
\end{definition}
Note that the
BV-functions form an algebra of discontinuous functions whose first
derivative exists almost everywhere. So it is frequently and naturally utilized to define generalized solutions of
nonlinear problems involving functional analysis, ordinary and
partial differential equations, mathematical physics and
engineering. For instance, when working with minimization problems,
reflexivity or the weak compactness property of the function space
$W^{1,p}(\mathbb R^{d})$, $p>1$, usually plays an important role.
For the case of the space $W^{1,1}(\mathbb R^{d})$, one possible way
to deal with this lack of reflexivity is to consider the space
$BV(\mathbb R^{d})$.  As a wider class of functions,  the space
$BV(\mathbb R^{d})$ provides tools, such as lower semicontinuity of
the total variation measure, which can be used to overcome the
problems caused by the lack of reflexivity in the arguments. We
refer the reader to \cite{ambr}, \cite{BM}, \cite{Bramanti},
\cite{Carbonaro} and \cite{LS}.

In the study of  the pointwise behavior of a Sobolev function, the
notion of capacity plays a crucial role. The functional capacities
are of fundamental importance in various branches of mathematics
such as analysis, geometry, mathematical physics, partial
differential equations, and probability theory, see \cite{Costea},
\cite{Landis}, \cite{mazya5} and \cite{JXYZ} for the details.
 In recent years, the capacity related to bounded variation functions attracts the attentions of many researchers and  a
 lot of progress have been obtained. We refer to \cite{Ziemer} for the information of the classical BV-capacity in $\mathbb R^{d}$.
  In 2010,  Hakkarainen and Kinnunen \cite{HK} studied basic properties of the BV-capacity and the Sobolev capacity  in
   a complete metric space equipped with a doubling measure and supporting a weak Poincar\'{e} inequality. The relation  between
   the variational Sobolev 1-capacity and versions of variational BV-capacity in a complete metric space was further investigated by
     Hakkarainen  and Shanmugalingam \cite{HS}. In \cite{xiao2}, J. Xiao introduced the BV-type capacity on Gaussian spaces $\mathbb G^{d}$,
      and as an application, the Gaussian BV-capacity was used to the trace theory of Gaussian BV-space. On the generalized Grushin plane,   Liu \cite{liu} obtained some sharp trace and  isocapacity
inequalities by the BV capacity.
       For further information on this topic, we refer to \cite{HS}, \cite{La}, \cite{Xiao},  \cite{T. Wang}, \cite{xiao3} and  the references therein.

In this paper, for $\alpha\in [1,\infty)$,   let
$$\mathcal{H}_{\alpha}=\Delta-(\alpha-1)|x|^{\alpha}$$ be the $\alpha$-Hermite
operator for the hydrogen atom located at the origin in\, $ \mathbb
R^d,\, d\geq 2$, and $\Omega\subset \mathbb R^{d}$ be an open set.
The $\alpha$-Hermite operator is self-adjoint on the set of
infinitely differentiable functions with compact support, and it can
be factorized as
 \begin{equation*}\label{a0}
   \mathcal{H}_{\alpha}=\frac{1}{2}\sum_{i=1}^d(A^{+}_{i,\alpha}A^{-}_{i,\alpha}+A^{-}_{i,\alpha}A^{+}_{i,\alpha}),
\end{equation*}
 where
 $$
   A^{+}_{i,\alpha}=\partial_{x_i}+\sqrt{\alpha-1}x_i|x|^{(\alpha-2)/2},\qquad A^{-}_{i,\alpha}=\partial_{x_i}-\sqrt{\alpha-1}x_i|x|^{(\alpha-2)/2},
   \quad i=1,2,\ldots,d. $$
When $\alpha=1$, $A^{+}_{i,\alpha}$ and $A^{-}_{i,\alpha}$ are
exactly the classical partial derivatives $\partial_{x_i}$.  In the analysis
associated to $\mathcal{H}_{\alpha}$, the operators
$A^{\pm}_{i,\alpha}, 1\leq|i|\leq d$, play the same role as $\partial_{x_i}$ in the Euclidean
analysis. Refer to \cite{Bongioanni,Harboure,
Stempak,Stinga,Thangavelu} for the case of $-\mathcal{H}_2$. We call
$A^{\pm}_{i,\alpha}, 1\leq i\leq d$, the generalized derivatives
associated to $\mathcal{H}_{\alpha}$ and denote by
$$\nabla_{\mathcal{H}_{\alpha}}=(A^{-}_{d,\alpha},\ldots, A^{-}_{1,\alpha}, A_{1,\alpha},\ldots,A_{d,\alpha})$$
the generalized gradient. The partial differential equations defined
by the generalized gradient associated with $-{\mathcal{H}_2}$ can
be found in \cite{Molahajloo,Sjogren,Thangavelu1,Wong}.

We use $\mathcal{BV}_{\mathcal{H}_{\alpha}}(\Omega)$ to represent
the class of all functions with $\alpha$-Hermite Bounded Variation
(in short, $\alpha$-HBV) on $\Omega$. In Section \ref{sec-2.1}, we
investigate some basic properties of
$\mathcal{BV}_{\mathcal{H}_{\alpha}}(\Omega)$, e.g., the lower
semicontinuity (Lemma \ref{lem1.1}), the completeness  (Lemma
\ref{lemma2.3}) and approximation via $C^{\infty}_{c}$-functions
(Theorem \ref{propos4}). Section \ref{sec-2.2} is devoted to the
perimeter $P_{\mathcal{H}_{\alpha}}(\cdot)$ induced by
$\mathcal{BV}_{\mathcal{H}_{\alpha}}(\Omega)$, see (\ref{eq-2.1})
below. In Theorem \ref{thm2.6}, we obtain a coarea formula for
$\alpha$-HBV functions. As an application, we deduce that the
Sobolev type inequality:
$$\|f\|_{L^{{d}/{(d-1)}}}\lesssim \| \nabla_{\mathcal{H}_{\alpha}} f\|( \mathbb{R}^d)\ \ \ \ \ \ \forall\ f\in \mathcal{BV}_{\mathcal{H}_{\alpha}}( \mathbb{R}^d) $$
is equivalent to the following isoperimetric inequality:
$$|E|^{1-{1}/{d}}\lesssim \| \nabla_{\mathcal{H}_{\alpha}}
1_E\|(\mathbb{R}^d),$$ where $E$ is a bounded set with finite
$\alpha$-Hermite perimeter in $ \mathbb{R}^d$.

Recall that an elementary property of $P_{\mathcal{H}_{1}}(\cdot)$ is
\begin{equation}\label{eq-1.2}
P_{\mathcal{H}_{1}}(E)=P_{\mathcal{H}_{1}}(E^{c})\ \ \ \ \forall\ E
\subseteq \mathbb{R}^d.
\end{equation}
Unfortunately, we point out that, even for the convex set $E$, (\ref{eq-1.2}) is invalid for the general Hermite perimeter $P_{\mathcal{H}_{\alpha}}(\cdot)$.
 By the aid of Corollary \ref{coro2.5}, we construct a counterexample to show that there exists a convex set $E$ such that
  $P_{\mathcal{H}_{\alpha}}(E^c)=\infty$ while $P_{\mathcal{H}_{\alpha}}(E)<\infty$ (see (\ref{eq-converse-example})). In order to cover this shortage of $P_{\mathcal{H}_{\alpha}}(\cdot)$, we introduce a restricted version
 $\widetilde{P}_{\mathcal{H}_{\alpha}}(\cdot)$ such that the identity (\ref{eq-1.2}) holds true, see Lemma \ref{lem-5.1}.

In Section \ref{sec-3}, we introduce  the $\alpha$-HBV capacity
denoted by $\mathrm{cap}(E,
\mathcal{BV}_{\mathcal{H}_{\alpha}}(\mathbb R^{d}))$ for a set
$E\subseteq \mathbb R^{d}$. In Section \ref{sec-3.1}, we investigate
the measure-theoretic nature of $\mathrm{cap}(\cdot,
\mathcal{BV}_{\mathcal{H}_{\alpha}}(\mathbb R^{d}))$. Theorem
\ref{thm8} indicates that $\mathrm{cap}(\cdot, \mathcal{BV}(\mathbb
R^{d}))$ is not only an outer measure (obeying (i), (ii)\ \&\ (iv)),
but also a Choquet capacity (satisfying (i), (ii), (v)\ \&\ (vi)).
Denote by $[\mathcal{BV}_{\mathcal{H}_{\alpha}}(\mathbb
R^{d})]^{\ast}$ the dual space of
$\mathcal{BV}_{\mathcal{H}_{\alpha}}(\mathbb R^{d})$. In Section
\ref{s4}, we prove that a nonnegative Radon measure $\mu$ satisfying
one of the following two conditions:
$$\begin{cases}
\left|\int_{\mathbb R^d}f\,d\mu\right|\lesssim
\big(\|f\|_{L^1}+\parallel \nabla_{\mathcal{H}_{\alpha}}
f\parallel({\mathbb R^d}) \big) \quad\forall\, f\in
\mathcal{BV}_{\mathcal{H}_{\alpha}}(\mathbb R^d);\\
\mu(B)\lesssim  {\mathrm{cap}}(B,
\mathcal{BV}_{\mathcal{H}_{\alpha}}(\mathbb R^d))\ \forall\
\hbox{Borel\ set}\, B\subseteq \mathbb R^d.
\end{cases}$$
can be treated as a member of
$[\mathcal{BV}_{\mathcal{H}_{\alpha}}(\mathbb R^{d})]^{\ast}$.
Moreover, the above result derives a dual definition of
$\mathrm{cap}(\cdot, \mathcal{BV}_{\mathcal{H}_{\alpha}})$, see
Lemma \ref{l31} and Theorem \ref{t31}, respectively. Section
\ref{s5} is devoted to the trace and $\alpha$-HBV isocapacity
inequalities in $\mathbb R^d$. In Theorem \ref{thm18}, we obtain the
trace/restriction theorem arising from the end-point
$\alpha$-Hermite Sobolev space
$W^{1,1}_{\mathcal{H}_{\alpha}}(\mathbb R^{d})$. Further, assuming
that $\mu$ is a Lebesgue measure in Theorem \ref{thm18}, we derive
an imbedding result for the $\alpha$-Hermite operator
$\mathcal{H}_\alpha$. Let
$$\mathfrak C(f):=\Big\{\int^{\infty}_0
\Big[\mathrm{cap}(\{x\in  \mathbb R^d: |f(x)|\ge t\},
\mathcal{BV}_{\mathcal{H}_{\alpha}}(\mathbb
R^d))\Big]^{{{d}/{(d-1)}}}dt^{{{d}/({d-1})}}\Big\}^{{{(d-1)}/{d}}}.$$
In Theorem \ref{thm1}, we establish the following two equivalent
relations:
\begin{itemize}
\item[(i)] For any compactly supported $L^{{d}/{(d-1)}}$-function $f$, the analytic inequality:
$$\|f\|_{d/(d-1)}\lesssim \mathfrak C(f) \Longleftrightarrow |M|^{{{(d-1)}/{d}}}\lesssim
\mathrm{cap}(M, \mathcal{BV}_{\mathcal{H}_{\alpha}}(\mathbb R^d)),$$
where $M$ is any compact set  in $ \mathbb R^d$.
\item[(ii)] For any $f\in C^\infty_c(\mathbb R^d)$,
$$\mathfrak C(f)\lesssim  \|f\|_{\mathcal{BV}_{\mathcal{H}_{\alpha}}(\mathbb R^d)}\Longleftrightarrow\mathrm{cap}(M, \mathcal{BV}_{\mathcal{H}_{\alpha}}
(\mathbb R^d))\lesssim |M|+ P_{\mathcal{H}_{\alpha}}(M),$$ where $M$
is any connected compact set  in $\mathbb R^d$ with smooth boundary,
and  $P_{\mathcal{H}_{\alpha}}(M)$ is the $\alpha$-Hermite perimeter
of $M$.
\end{itemize}

In Section \ref{sec-5}, we want to investigate the $\alpha$-Hermite
mean curvature of a set with finite $\alpha$-Hermite perimeter. For
the special case, i.e., the Laplace operator $\mathcal{H}_{1}$, sets
of finite perimeter were introduced by E. De Giorgi in the 1950s,
and were applied to
 the research on some classical problems of the calculus of variations, such as the Plateau problem and the isoperimetric problem, see \cite{GCP},
 \cite{Giu} and \cite{Mas}. Barozzi-Gonzalez-Tamanini \cite{Barozzi} proved that every set $E$ of finite perimeter $P_{\mathcal{H}_{1}}(E)$
  in $\mathbb R^{d}$ has mean curvature in $L^1(\mathbb R^n)$. A natural question is that if the result of \cite{Barozzi} holds
  for $P_{\mathcal{H}_{\alpha}}(E), \alpha\in (1,\infty)$. We point out that, in the proof of main theorem of \cite{Barozzi},
  the identity (\ref{eq-1.2}) is required. The counterexample (\ref{eq-converse-example}) and Lemma \ref{lem-5.1} reveal that the restricted
  $\alpha$-Hermite perimeter, $\widetilde{P}_{\mathcal{H}_{\alpha}}(\cdot)$, is an appropriate substitute for $P_{\mathcal{H}_{1}}(\cdot)$ in
  general Hermite settings.
 In Theorem \ref{thm5-1}, we generalize the result of \cite{Barozzi} to $\widetilde{P}_{\mathcal{H}_{\alpha}}$ and prove that
 every set $E$ with $\widetilde{P}_{\mathcal{H}_{\alpha}}(E)<\infty$
  in $\mathbb R^{d}$ has mean curvature in $L^{1}(\mathbb R^{d})$. For the special case $\alpha=1$, Theorem \ref{thm5-1} coincides with \cite[page 314, Theorem]{Barozzi}, see Remark \ref{remark-5.1}.

\bigskip

\noindent{\it Some notations}:
\begin{itemize}
\item ${\mathsf U}\approx{\mathsf V}$ indicates that
there is a constant $c>0$ such that $c^{-1}{\mathsf V}\le{\mathsf
U}\le c{\mathsf V}$, whose right inequality is also written as
${\mathsf U}\lesssim{\mathsf V}$. Similarly, one writes ${\mathsf
V}\gtrsim{\mathsf U}$ for ${\mathsf V}\ge c{\mathsf U}$.

\item For convenience, the positive constant $C$
may change from one line to another and this usually depends on the spatial dimension $d$, the indices $p$,
and other fixed parameters.

\item Throughout this article, we use ${C}(\mathbb{R}^{d})$ to denote the spaces of all continuous functions on $\mathbb{R}^{d}$.
 Let $k\in\mathbb{N}\cup \{\infty\}$. The symbol  ${C}^{k}(\mathbb{R}^{d})$ denotes the class of all functions
  $f:\ \mathbb{R}^{d}\rightarrow \mathbb{R}$ with $k$ continuous partial derivatives. Denote by ${C}^{k}_{c}(\mathbb{R}^{d})$
  the class of all functions $f\in {C}^{k}(\mathbb{R}^{d})$ with compact
  support. The symbol $C^k(\mathbb R^d;\mathbb R^{2d})$ denotes the
  class of the functions $\varphi:\ \mathbb{R}^{d}\rightarrow \mathbb{R}^{2d}$, $\varphi=(\varphi_1,\varphi_2,\ldots,\varphi_{2d})$
  with $\varphi_i\in {C}^{k}(\mathbb{R}^{d})$ for $i=1,2,\ldots,
  2d$. The symbol $C^k_c(\Omega;\mathbb R^{2d})$ denotes the
  class of the functions $\varphi:\ \Omega\rightarrow \mathbb{R}^{2d}$, $\varphi=(\varphi_1,\varphi_2,\ldots,\varphi_{2d})$
  with $\varphi_i\in {C}^{k}(\Omega)$ for $i=1,2,\ldots,
  2d$.
\end{itemize}


\section{$\alpha$-HBV functions}\label{sec-2}
\subsection{Basic properties of $\mathcal{BV}_{\mathcal{H}_{\alpha}}(\Omega)$}\label{sec-2.1}
The divergence of a vector valued
function $$\varphi=(\varphi_1,\varphi_2,\ldots,\varphi_{2d})\in
C^1(\mathbb R^d;\mathbb R^{2d})$$ is
$$\mathrm{div}_{\mathcal{H}_{\alpha}}\varphi=A^{+}_{d,\alpha}\varphi_1+\cdots +A^{+}_{1,\alpha} \varphi_d+A^{-}_{1,\alpha} \varphi_{d+1}+\cdots
+A^{-}_{d,\alpha}\varphi_{2d}.$$  By a simple computation, we have
$$\mathrm{div}_{\mathcal{H}_{\alpha}}(\nabla_{\mathcal{H}_{\alpha}}
u)=2(\Delta-(\alpha-1)|x|^{\alpha})u.$$
Let $\Omega\subseteq \mathbb{R}^d$ be an open set.
  The  $\alpha$-Hermite variation of $f\in  {L}^1(\Omega)$ is defined by
\begin{equation*}
\parallel \nabla_{\mathcal{H}_{\alpha}} f\parallel(\Omega)=\sup_{\varphi\in
\mathcal{F}}\Big\{\int_{\Omega}f(x)
\mathrm{div}_{\mathcal{H}_{\alpha}}\varphi (x)dx\Big\},
\end{equation*}
where   $\mathcal{F}(\Omega)$ denotes the class of all functions
$$\varphi=(\varphi_1,\varphi_2,\ldots,\varphi_{2d})\in
C^1_c(\Omega;\mathbb R^{2d})$$ satisfying $$\parallel
\varphi\parallel_{\infty}=\sup_{x\in \Omega}(\mid
\varphi_1(x)\mid^2+\cdots+\mid \varphi_{2d}(x)\mid^2)^{1/2}\le 1.$$
An $L^1$ function $f$ is said to have the  $\alpha$-Hermite bounded
variation on $\Omega$ if $$\parallel \nabla_{\mathcal{H}_{\alpha}}
f\parallel(\Omega) < \infty,$$ and the collection of all such
functions is denoted by
$\mathcal{BV}_{\mathcal{H}_{\alpha}}(\Omega)$, which  is a Banach
space with the norm
$$\parallel f\parallel_{\mathcal{BV}_{\mathcal{H}_{\alpha}}(\Omega)} = \parallel
f\parallel_{L^1}+\parallel \nabla_{\mathcal{H}_{\alpha}}
f\parallel(\Omega),$$  see Lemma \ref{lemma2.3} below.   A
function $f\in L^1_{\loc}(\Omega,\mathbb{R})$ is said to be of
locally Hermite variation and we write $f\in
\mathcal{BV}_{H,\loc}(\Omega)$ if
\[
\parallel \nabla_{\mathcal{H}_{\alpha}} f\parallel(U)<\infty 
\]
holds true for every open set $U\subset \Omega$.

    Let $\Omega \subset \R^d$ be a  bounded open set and $E\subset \Omega$ be a Borel set.
    Then using \cite[Theorem 1.38]{EG}, it is easy to check that
    \begin{align*}
    \parallel \nabla_{\mathcal{H}_{\alpha}} f\parallel(E):=\inf\Big\{ \parallel \nabla_{\mathcal{H}_{\alpha}} f\parallel(U):\  E\subset U,\ U\subset\Omega\ \textrm{open}\Big\}
    \end{align*}
    extends $\parallel\nabla_{\mathcal{H}_{\alpha}} f\parallel(\cdot)$ to a Radon measure in $\Omega$.

In what follows,  we will collect   some properties of the space
$\mathcal{BV}_{\mathcal{H}_{\alpha}}(\Omega)$.  In \cite{Bongioanni}, the authors investigated the
  Sobolev spaces associated with $-\mathcal{H}_2$. The Sobolev spaces associated with $\mathcal{H}_{\alpha}$ in $\mathbb{R}^d$ can be similarly
  defined  as
  follows and they have same properties as the case of $-\mathcal{H}_2$.
\begin{definition}\label{defn1}Suppose $\Omega$ is an open set in
$\mathbb{R}^d$.
 Let $1\leq q<\infty$. The $\alpha$-Hermite Sobolev space $W^{1,q}_{\mathcal{H}_{\alpha}}(\Omega)$ associated with $\mathcal{H}_{\alpha}$ is
 defined as the set of all functions $f\in L^q(\Omega)$ such that
 $$A^{\pm}_{j,\alpha} f\in L^q(\Omega),\qquad 1\leq j\leq d.$$
 The norm of $f\in W^{1,q}_{\mathcal{H}_{\alpha}}(\Omega)$ is defined as
 $$\|f\|_{W^{1,q}_{\mathcal{H}_{\alpha}}}:=\sum_{1\leq j\leq d}\|A^{+}_{j,\alpha} f\|_{L^q}+\sum_{1\leq j\leq d}\|A^{-}_{j,\alpha} f\|_{L^q}+\|f\|_{L^q}. $$
 \end{definition}

\begin{lemma}\label{lem1.1}
\item{{\rm (i)}}   If $f\in W^{1,1}_{\mathcal{H}_{\alpha}}(\Omega)$, then
\begin{equation}\label{eq1.1}\parallel \nabla_{\mathcal{H}_{\alpha}} f \parallel(\Omega)=\int_{\Omega} |\nabla_{\mathcal{H}_{\alpha}} f(x)|dx.\end{equation}

\item{{\rm(ii)}} The $\alpha$-Hermite variation has the following lower semicontinuity:
if $$f,f_k\in \mathcal{BV}_{\mathcal{H}_{\alpha}}(\Omega), k\in
\mathbb{N},\ \ \text{ satisfy}\ \ f_k\rightarrow f \ \ \text{in}\ \
L^1_{\mathrm{loc}}(\Omega),
$$ then
\begin{equation}\label{equation1}\lim\inf_{k\rightarrow\infty}\parallel \nabla_{\mathcal{H}_{\alpha}}
f_k\parallel(\Omega)\ge \parallel \nabla_{\mathcal{H}_{\alpha}}
f\parallel(\Omega).
\end{equation}
\end{lemma}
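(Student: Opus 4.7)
The plan for part (i) is the classical identification of the weak gradient with the BV total variation, adapted to the $\alpha$-Hermite setting. The first step is the integration-by-parts identity
\[
\int_\Omega u\, A^{\pm}_{i,\alpha}\varphi\,dx = -\int_\Omega (A^{\mp}_{i,\alpha} u)\,\varphi\,dx, \qquad u\in W^{1,1}_{\mathcal{H}_\alpha}(\Omega),\ \varphi\in C^1_c(\Omega),
\]
which follows from the classical product rule together with the fact that the first-order weight $\sqrt{\alpha-1}\,x_i|x|^{(\alpha-2)/2}$ is continuous and locally bounded on $\Omega$ (its magnitude near the origin is $\lesssim |x|^{\alpha/2}$, which vanishes as $|x|\to 0$ whenever $\alpha\geq 1$). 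Applying this componentwise matches the divergence structure of $\mathrm{div}_{\mathcal{H}_\alpha}\varphi$ to the gradient structure of $\nabla_{\mathcal{H}_\alpha} f$ (keeping track of the reversal of indices in the definition of $\nabla_{\mathcal{H}_\alpha}$), so that
\[
\int_\Omega f\, \mathrm{div}_{\mathcal{H}_\alpha}\varphi\,dx \;=\; -\int_\Omega \nabla_{\mathcal{H}_\alpha} f\cdot \varphi\,dx.
\]
The pointwise bound $|\varphi|\leq 1$ and Cauchy--Schwarz then yield the easy direction $\|\nabla_{\mathcal{H}_\alpha} f\|(\Omega)\leq \int_\Omega|\nabla_{\mathcal{H}_\alpha} f|\,dx$.

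For the reverse inequality I would construct near-optimal test fields. Put $\psi_\varepsilon:=-\nabla_{\mathcal{H}_\alpha} f/(|\nabla_{\mathcal{H}_\alpha} f|+\varepsilon)\in L^\infty(\Omega;\mathbb R^{2d})$, which satisfies $|\psi_\varepsilon|\leq 1$. Fix a compact $K\Subset\Omega$ and mollify $\psi_\varepsilon\,\chi_K$ by a standard $C^\infty_c$ kernel of width $\delta<\mathrm{dist}(K,\partial\Omega)$ to obtain $\varphi_{\varepsilon,\delta,K}\in C^1_c(\Omega;\mathbb R^{2d})$ with $\|\varphi_{\varepsilon,\delta,K}\|_\infty\leq 1$. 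Testing these in the defining supremum, pairing against $\nabla_{\mathcal{H}_\alpha} f$ via the identity above, and sending $\delta\to 0$, $K\nearrow\Omega$, $\varepsilon\to 0$ in turn (with dominated/monotone convergence driving each limit) produces $\int_\Omega|\nabla_{\mathcal{H}_\alpha} f|\,dx\leq \|\nabla_{\mathcal{H}_\alpha} f\|(\Omega)$ and hence \eqref{eq1.1}.

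For part (ii) the argument is a direct Fatou-type lower-semicontinuity. Fix $\varphi\in \mathcal F(\Omega)$ and let $K=\mathrm{supp}\,\varphi$. Since the coefficients of $A^{\pm}_{i,\alpha}$ are continuous and locally bounded, $\mathrm{div}_{\mathcal{H}_\alpha}\varphi\in L^\infty(\Omega)$ with support contained in $K$, so the $L^1_{\mathrm{loc}}$-convergence $f_k\to f$ gives
\[
\int_\Omega f\,\mathrm{div}_{\mathcal{H}_\alpha}\varphi\,dx \;=\; \lim_{k\to\infty}\int_\Omega f_k\,\mathrm{div}_{\mathcal{H}_\alpha}\varphi\,dx \;\leq\; \liminf_{k\to\infty}\|\nabla_{\mathcal{H}_\alpha} f_k\|(\Omega);
\]
taking the supremum over $\varphi\in\mathcal F(\Omega)$ on the left yields \eqref{equation1}.

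The only point requiring some care is the reverse inequality in (i): the three-parameter limit $\delta\to 0$, $K\nearrow\Omega$, $\varepsilon\to 0$ must be organised so that (a) each approximant $\varphi_{\varepsilon,\delta,K}$ genuinely lies in $\mathcal F(\Omega)$, (b) the Hermite weight appearing inside $\mathrm{div}_{\mathcal{H}_\alpha}\varphi_{\varepsilon,\delta,K}$ does not spoil the passage to the limit, and (c) the limit reconstructs $|\nabla_{\mathcal{H}_\alpha} f|$ rather than some weighted variant. Once these bookkeeping items are in place, the remainder reduces to standard Euclidean BV manipulations, and the proof of part (ii) is then straightforward.
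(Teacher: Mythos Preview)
Your proposal is correct and follows essentially the same approach as the paper. For (i), the paper also establishes the integration-by-parts identity and then, for the reverse inequality, approximates the normalized field $\nabla_{\mathcal{H}_\alpha} f/|\nabla_{\mathcal{H}_\alpha} f|$ (set to $0$ where the gradient vanishes) directly by a sequence $\varphi_n\in C_c^\infty(\Omega;\mathbb R^{2d})$ with $\|\varphi_n\|_\infty\le 1$ and applies dominated convergence, whereas you organize the same approximation via the three parameters $(\varepsilon,\delta,K)$; for (ii) the arguments are identical.
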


\begin{proof}(i) If $f\in W^{1,1}_{\mathcal{H}_{\alpha}}(\Omega)$, then
$\nabla_{\mathcal{H}_{\alpha}} f\in L^1(\Omega)$. For every $\varphi\in C_c^{1}(\Omega; \mathbb R^{2d})$ with $\|\varphi\|_{L^\infty(\Omega)}\leq 1$,
we have
\begin{align*}
\left|\int_{\Omega}  f(x) \dvg_{\mathcal{H}_{\alpha}} \varphi(x)  dx\right|
& = \left|\int_{\Omega} \nabla_{\mathcal{H}_{\alpha}} f(x)\cdot \varphi(x)  dx\right|\leq \int_{\Omega}|\nabla_{\mathcal{H}_{\alpha}} f(x)|dx.
\end{align*}
    By taking the supremum over $\varphi$, we
    conclude that  $f\in  \mathcal{BV}_{\mathcal{H}_{\alpha}}(\Omega)$ and
    \begin{align}\label{pp}
   \parallel \nabla_{\mathcal{H}_{\alpha}} f \parallel(\Omega)\leq  \int_{\Omega}|\nabla_{\mathcal{H}_{\alpha}} f(x)| dx.
    \end{align}
  Define $\varphi\in L^\infty(\Omega;\mathbb R^{2d})$ as follows:
    $$
   \varphi(x):=
    \begin{cases}
     \frac{\nabla_{\mathcal{H}_{\alpha}} f(x)}{|\nabla_{\mathcal{H}_{\alpha}} f(x)|}, &
    \text{if $x\in\Omega$ and $\nabla_{\mathcal{H}_{\alpha}} f(x)\neq 0$,} \\
    0, & \text{otherwise.}
    \end{cases}
    $$
It is easy to see  that $\|\varphi\|_\infty \leq 1$.\ We choose a
sequence
    $\{\varphi_n\}_{n\in\N} \subset C^\infty_c(\Omega; \R^{2d})$ such that
    $\varphi_n\to \varphi$   as $n\to\infty$,
    with $\|\varphi_n\|_{L^\infty(\Omega)} \leq 1$ for all $n \in \N$.\
Combining the definition of $\parallel \nabla_{\mathcal{H}_{\alpha}} f \parallel(\Omega)$ with integration by parts derives that for every $n\geq 1$,
\begin{eqnarray*}
\parallel \nabla_{\mathcal{H}_{\alpha}} f \parallel(\Omega) & \geq& \sum_{i=1}^{d}\int_{\Omega} \big(\partial_{x_i}f(x)
 +\sqrt{\alpha-1}x_i|x|^{(\alpha-2)/2} f(x) \big)\varphi^{(d-i+1)}_n(x)dx\\
  &&+\sum_{i=1}^{d}\int_{\Omega} \big(\partial_{x_i}f(x)
 - \sqrt{\alpha-1}x_i|x|^{(\alpha-2)/2} f(x) \big)\varphi^{(i+d)}_n(x)dx\\
 &=& \int_{\Omega}  \nabla_{\mathcal{H}_{\alpha}} f(x)\cdot \varphi_n(x) dx.
\end{eqnarray*}
    By the dominated convergence theorem and the definition of $\varphi$,
    we have
    $$
    \parallel \nabla_{\mathcal{H}_{\alpha}} f \parallel(\Omega)\geq\int_{\Omega} |\nabla_{\mathcal{H}_{\alpha}} f(x)|dx
    $$ via letting $n\to \infty$,
    which is the opposite of inequality \eqref{pp}.


(ii) Fix $\varphi\in C^\infty_c(\Omega; \R^{2d})$ with
$\|\varphi\|_{L^\infty(\Omega)}\leq 1$. It follows from the definition  of
$\parallel \nabla_{\mathcal{H}_{\alpha}} f_k \parallel(\Omega)$ that
\begin{align*}
& \parallel \nabla_{\mathcal{H}_{\alpha}} f_{k}
\parallel(\Omega)\geq \int_{\Omega} f_k(x)
\dvg_{\mathcal{H}_{\alpha}}\varphi(x)  dx.
\end{align*}
By the convergence of $\{f_k\}_{k\in\N}$ in $L^1_{\loc}(\mathbb
R^d)$ to $f$ and Fatou's lemma, we get
\begin{align*}
& \liminf_{k\to\infty}\parallel \nabla_{\mathcal{H}_{\alpha}} f_k
\parallel(\mathbb R^d)\geq \int_{\Omega} f(x) \dvg_{\mathcal{H}_{\alpha}}\varphi(x)
dx.
\end{align*}
Therefore, (ii) can be proved  by the definition of $ \parallel
\nabla_{\mathcal{H}_{\alpha}} f \parallel(\Omega)$ and the
arbitrariness of such functions $\varphi$.
\end{proof}

The following lemma gives the  structure theorem   for
$\alpha$-Hermite BV functions.
\begin{lemma}\label{Struttura}
Let $\Omega \subset \R^d$ be a  bounded open set.  There exists a
unique $\R^{2d}$-valued finite Radon measure
$\mu_{\mathcal{H}_{\alpha},u}$ such that
\begin{equation*}
\begin{aligned}
\int_{\Omega}u(x) \dvg_{\mathcal{H}_{\alpha}} \varphi(x)  dx =
\int_{\Omega} \varphi(x)\cdot d \mu_{\mathcal{H}_{\alpha},u}  (x)
\end{aligned}
\end{equation*}
for every $\varphi\in C^{\infty}_c(\Omega; \R^{2d})$ and
\[
\parallel
\nabla_{\mathcal{H}_{\alpha}}
u\parallel(\Omega)=|\mu_{\mathcal{H}_{\alpha},u}|(\Omega).
\]
\end{lemma}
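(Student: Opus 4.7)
The plan is to read off the measure $\mu_{\mathcal{H}_\alpha,u}$ from a Riesz-type representation applied to the linear functional defined by integration against $u$. Concretely, I would introduce
\[
L_u(\varphi):=\int_{\Omega}u(x)\,\mathrm{div}_{\mathcal{H}_\alpha}\varphi(x)\,dx,\qquad \varphi\in C_c^\infty(\Omega;\mathbb{R}^{2d}),
\]
which is plainly linear in $\varphi$. The first and central observation is that the defining supremum of $\|\nabla_{\mathcal{H}_\alpha}u\|(\Omega)$ furnishes an immediate sup-norm bound: for any $\varphi\in C_c^\infty(\Omega;\mathbb{R}^{2d})$ with $\varphi\not\equiv 0$, the rescaling $\widetilde\varphi:=\varphi/\|\varphi\|_\infty$ belongs to the test class $\mathcal{F}(\Omega)$, so that
\[
|L_u(\varphi)|\le \|\nabla_{\mathcal{H}_\alpha}u\|(\Omega)\,\|\varphi\|_\infty.
\]
Since $\Omega$ is bounded and $u\in L^1(\Omega)$, the assumption $u\in\mathcal{BV}_{\mathcal{H}_\alpha}(\Omega)$ guarantees that the right-hand side is finite.

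Next I would extend $L_u$. By density of $C_c^\infty(\Omega;\mathbb{R}^{2d})$ in $C_0(\Omega;\mathbb{R}^{2d})$ with respect to the sup norm, the above bound lets me continue $L_u$ uniquely to a bounded linear functional on $C_0(\Omega;\mathbb{R}^{2d})$ of norm at most $\|\nabla_{\mathcal{H}_\alpha}u\|(\Omega)$. The Riesz representation theorem for vector-valued continuous functions vanishing at infinity then yields a unique $\mathbb{R}^{2d}$-valued finite Radon measure $\mu_{\mathcal{H}_\alpha,u}$ on $\Omega$ such that
\[
L_u(\varphi)=\int_{\Omega}\varphi(x)\cdot d\mu_{\mathcal{H}_\alpha,u}(x)
\]
for every $\varphi\in C_c^\infty(\Omega;\mathbb{R}^{2d})$, which is exactly the displayed identity. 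Uniqueness of $\mu_{\mathcal{H}_\alpha,u}$ is inherited from the uniqueness in Riesz.

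Finally I would verify $\|\nabla_{\mathcal{H}_\alpha}u\|(\Omega)=|\mu_{\mathcal{H}_\alpha,u}|(\Omega)$ by a two-sided comparison. The inequality $\|\nabla_{\mathcal{H}_\alpha}u\|(\Omega)\le|\mu_{\mathcal{H}_\alpha,u}|(\Omega)$ comes from testing the Riesz identity against $\varphi\in\mathcal{F}(\Omega)$ and using $|\int\varphi\cdot d\mu_{\mathcal{H}_\alpha,u}|\le\|\varphi\|_\infty|\mu_{\mathcal{H}_\alpha,u}|(\Omega)$. The converse inequality follows from the standard characterization
\[
|\mu_{\mathcal{H}_\alpha,u}|(\Omega)=\sup\Bigl\{\int_{\Omega}\varphi\cdot d\mu_{\mathcal{H}_\alpha,u}:\ \varphi\in C_c(\Omega;\mathbb{R}^{2d}),\ \|\varphi\|_\infty\le 1\Bigr\},
\]
combined with a smooth truncation/convolution approximation to replace $C_c$ test vectors by elements of $\mathcal{F}(\Omega)$; this approximation must be carried out so that the sup-norm bound is preserved, which I regard as the one genuinely fussy point in the argument. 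Once this is in hand, taking the supremum against $\varphi\in\mathcal{F}(\Omega)$ gives $|\mu_{\mathcal{H}_\alpha,u}|(\Omega)\le\|\nabla_{\mathcal{H}_\alpha}u\|(\Omega)$, closing the loop. The main obstacle is thus purely technical: pushing the scalar Riesz-representation machinery through for the $\mathbb{R}^{2d}$-valued setting and matching the two notions of total variation via a careful sup-norm-preserving smoothing; no structural input beyond what is already in Section \ref{sec-2.1} is required.
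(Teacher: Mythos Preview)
Your proposal is correct and follows essentially the same route as the paper: bound the linear functional by $\|\nabla_{\mathcal{H}_\alpha}u\|(\Omega)\,\|\varphi\|_\infty$, extend to continuous compactly supported test fields, and invoke the vector-valued Riesz representation theorem. The only cosmetic difference is that the paper extends via Hahn--Banach to $(C_c(\Omega;\mathbb{R}^{2d}),\|\cdot\|_\infty)$ and then reads off $|\mu_{\mathcal{H}_\alpha,u}|(\Omega)=\|L\|=\|\nabla_{\mathcal{H}_\alpha}u\|(\Omega)$ directly from the norm-preservation in Hahn--Banach together with the Riesz identity $|\mu|(\Omega)=\|L\|$, whereas you extend by density and then argue the total-variation equality by a separate two-sided comparison; your route is slightly longer but entirely sound, and the ``fussy'' smoothing step you flag is exactly what the paper sidesteps by quoting the operator-norm identity from Riesz.
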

\begin{proof}
At first, we have
\[
\left|\int_{\Omega}u(x) \dvg_{\mathcal{H}_{\alpha}} \varphi(x)
dx\right|\leq \big(\parallel \nabla_{\mathcal{H}_{\alpha}}
u\parallel(\Omega)\big)\|\varphi\|_{L^\infty(\Omega)}\ \forall\
\varphi\in C^{\infty}_c(\Omega; \R^{2d}).
\]
Then, we use the Hahn-Banach theorem to conclude that there exists
  a linear and continuous extension $L$ of the functional
$\Psi:C^{\infty}_c(\Omega; \R^{2d})\to\mathbb R$ with
\[
\langle\Psi,\ \varphi\rangle= \int_{\Omega}u(x)
\dvg_{\mathcal{H}_{\alpha}} \varphi(x) \, dx
\]
to the normed space $(C_c(\Omega;
\R^{2d}),\|\cdot\|_{L^\infty(\Omega)})$ such that
$$
\|L\|=\|\Psi\|= \parallel \nabla_{\mathcal{H}_{\alpha}}
u\parallel(\Omega).
$$
Secondly, using the Riesz representation theorem (cf.\
\cite[Corollary 1.55]{ABF}),  we know that there exists a unique
$\R^{2d}$-valued finite Radon measure $\mu_{\mathcal{H}_{\alpha},
u}$ such that
\begin{align*}
L (\varphi) = \int_{\Omega}\varphi(x) \cdot
d\mu_{\mathcal{H}_{\alpha},u}(x) \ \ \   \ \forall\  \varphi\in
C_c(\Omega;　\R^{2d})
\end{align*}
and such that $|\mu_{\mathcal{H}_{\alpha},u}|(\Omega)=\|L\|$. Thus
$|\mu_{\mathcal{H}_{\alpha},u}|(\Omega)=\parallel
\nabla_{\mathcal{H}_{\alpha}} u\parallel(\Omega)$.

\end{proof}

\begin{lemma}\label{lemma2.3}
The space
$(\mathcal{BV}_{\mathcal{H}_{\alpha}}(\Omega),\|\cdot\|_{\mathcal{BV}_{\mathcal{H}_{\alpha}}(\Omega)})$
is a Banach space.
\end{lemma}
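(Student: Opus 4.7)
The plan is to verify that $\|\cdot\|_{\mathcal{BV}_{\mathcal{H}_{\alpha}}(\Omega)}$ is a norm and then establish completeness via the lower semicontinuity already proved in Lemma \ref{lem1.1}(ii).

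First I would check the norm axioms. Positive homogeneity and the triangle inequality for $\|\nabla_{\mathcal{H}_{\alpha}}\cdot\|(\Omega)$ follow directly from its definition as a supremum of linear functionals $f\mapsto\int_\Omega f\,\dvg_{\mathcal{H}_{\alpha}}\varphi\,dx$ over the admissible class $\mathcal{F}(\Omega)$; combined with the analogous (and standard) properties of $\|\cdot\|_{L^1(\Omega)}$, this gives that $\|\cdot\|_{\mathcal{BV}_{\mathcal{H}_{\alpha}}(\Omega)}$ is a seminorm, and the definiteness comes from the $L^1$-part.

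Next I would handle completeness. Let $\{f_k\}_{k\in\mathbb{N}}$ be Cauchy in $\mathcal{BV}_{\mathcal{H}_{\alpha}}(\Omega)$. Since $\|f_k-f_j\|_{L^1(\Omega)}\leq \|f_k-f_j\|_{\mathcal{BV}_{\mathcal{H}_{\alpha}}(\Omega)}$, the sequence is Cauchy in $L^1(\Omega)$, so there exists $f\in L^1(\Omega)$ with $f_k\to f$ in $L^1(\Omega)$, hence also in $L^1_{\mathrm{loc}}(\Omega)$. Fix $\varepsilon>0$ and choose $N$ so that $\|\nabla_{\mathcal{H}_{\alpha}}(f_k-f_j)\|(\Omega)<\varepsilon$ for all $k,j\geq N$. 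For any fixed $j\geq N$, the sequence $\{f_k-f_j\}_{k\geq N}$ converges to $f-f_j$ in $L^1_{\mathrm{loc}}(\Omega)$, so by Lemma \ref{lem1.1}(ii),
\begin{equation*}
\|\nabla_{\mathcal{H}_{\alpha}}(f-f_j)\|(\Omega)\leq \liminf_{k\to\infty}\|\nabla_{\mathcal{H}_{\alpha}}(f_k-f_j)\|(\Omega)\leq \varepsilon.
\end{equation*}
This shows first that $f-f_j\in \mathcal{BV}_{\mathcal{H}_{\alpha}}(\Omega)$, hence $f\in\mathcal{BV}_{\mathcal{H}_{\alpha}}(\Omega)$, and second that $\|\nabla_{\mathcal{H}_{\alpha}}(f-f_j)\|(\Omega)\to 0$ as $j\to\infty$. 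Combined with $\|f-f_j\|_{L^1(\Omega)}\to 0$, this yields $\|f-f_j\|_{\mathcal{BV}_{\mathcal{H}_{\alpha}}(\Omega)}\to 0$, proving completeness.

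I do not anticipate a real obstacle: the only subtle ingredient is the lower semicontinuity applied to the shifted sequence $f_k-f_j$, which is exactly the device that makes the completeness argument work in the classical BV setting and transplants verbatim here because Lemma \ref{lem1.1}(ii) is formulated for general $L^1_{\mathrm{loc}}$-convergent sequences. One should just be careful to apply semicontinuity with $j$ fixed and $k\to\infty$, and only afterwards let $j\to\infty$.
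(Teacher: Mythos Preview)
Your proposal is correct and follows essentially the same route as the paper: dismiss the norm axioms as routine, use $L^1$-completeness to obtain the limit $f$, and then apply the lower semicontinuity of Lemma~\ref{lem1.1}(ii) to the shifted sequence $f_k-f_j$ with $j$ fixed to conclude $\|\nabla_{\mathcal{H}_{\alpha}}(f-f_j)\|(\Omega)\to 0$. Your write-up is in fact slightly more explicit than the paper's about the order of the two limits and about why $f$ lands in $\mathcal{BV}_{\mathcal{H}_{\alpha}}(\Omega)$.
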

\begin{proof}
    It is easy to check  that $\|\cdot\|_{\mathcal{BV}_{\mathcal{H}_{\alpha}}(\Omega)}$ is a norm. In what follows, we need to  prove that the space is complete.
    Let $\{f_n\}_{n\in\N}\subset \mathcal{BV}_{\mathcal{H}_{\alpha}}(\Omega)$ be a Cauchy sequence, namely, for every $\eps>0$ there exists $n_0\in\N$ such that
    $$\parallel\nabla_{\mathcal{H}_{\alpha}} (f_k-f_n) \parallel(\Omega)<\eps \quad
    \forall \ n,k\geq n_0.    $$
    In particular, $\{f_n\}_{n\in\N}$ is a Cauchy sequence in the Banach space
    $L^1(\mathbb
R^d)$, which implies that there exists $f\in L^1(\Omega)$ with
$\|f_n-f\|_{L^1(\Omega)}\to 0$  as $n\to\infty$. Then by  Lemma
\ref{lem1.1}, we have
    $$
    \parallel\nabla_{\mathcal{H}_{\alpha}} (f-f_k) \parallel(\Omega)\leq \liminf_n  \parallel\nabla_{\mathcal{H}_{\alpha}}
     (f_k-f_n) \parallel(\Omega)\leq\eps \quad
    \text{$\forall\,k\geq n_0$.}
    $$
So $ \parallel\nabla_{\mathcal{H}_{\alpha}} (f_k-f)
\parallel(\Omega) \to 0$ as $k\to\infty$. This completes the proof.
    \end{proof}

Next we will list the following approximation result  for the
$\alpha$-Hermite variation.

\begin{theorem}\label{propos4}
If $u\in \mathcal{BV}_{\mathcal{H}_{\alpha}}(\Omega)$, there exists a
sequence of functions $\{u_{h}\}_{h\in\mathbb{N}}\in
C^\infty_c(\Omega)\cap \mathcal{BV}_{\mathcal{H}_{\alpha}}(\Omega)$
such that\,　$ \lim\limits_{h\rightarrow\infty}\parallel u_{h}-u\parallel_{L^1}=0$ and
$$\lim_{h\rightarrow\infty}\int_{\Omega}
|\nabla_{\mathcal{H}_{\alpha}} u_{h}(x)|dx=
\parallel \nabla_{\mathcal{H}_{\alpha}} u\parallel(\Omega).$$
\end{theorem}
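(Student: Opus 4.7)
The plan is to adapt the classical Meyers--Serrin/Anzellotti--Giaquinta partition-of-unity-plus-mollifier argument for $BV$ to the $\alpha$-Hermite setting. Two features of the operators $A^{\pm}_{j,\alpha}=\partial_{x_j}\pm b_j$, with $b_j(x):=\sqrt{\alpha-1}\,x_j|x|^{(\alpha-2)/2}$ a continuous function on $\mathbb R^d$, are crucial. First, because $b_j$ acts by multiplication, the Leibniz rule takes the clean form
$$A^{\pm}_{j,\alpha}(\eta u)=\eta\,A^{\pm}_{j,\alpha}u+u\,\partial_{x_j}\eta,$$
so only ordinary derivatives of the cutoffs appear. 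Second, for any continuous $b$ and standard Friedrichs mollifier $\rho_\varepsilon$, the commutator $[b,\rho_\varepsilon\ast]f:=b(\rho_\varepsilon\ast f)-\rho_\varepsilon\ast(bf)$ tends to $0$ in $L^1_{\mathrm{loc}}$ as $\varepsilon\to0$ whenever $f\in L^1_{\mathrm{loc}}$.

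Fix $\delta>0$. Using that $\|\nabla_{\mathcal{H}_\alpha}u\|$ is a Radon measure on $\Omega$ (Lemma \ref{Struttura}), I would choose an exhaustion $\{\Omega_k\}$ of $\Omega$ with $\overline{\Omega_k}\Subset\Omega_{k+1}$ and an index $k_0$ satisfying $\|\nabla_{\mathcal{H}_\alpha}u\|(\Omega\setminus\Omega_{k_0})<\delta$. Set $A_0:=\Omega_{k_0+1}$ and $A_i:=\Omega_{k_0+i+1}\setminus\overline{\Omega_{k_0+i-1}}$ for $i\ge 1$, take a smooth partition of unity $\{\eta_i\}$ subordinate to $\{A_i\}$, and for each $i$ select $\varepsilon_i>0$ small enough that $\mathrm{supp}(\rho_{\varepsilon_i}\ast(\eta_iu))\subset A_i$ and each of the quantities
$$\|\rho_{\varepsilon_i}\ast(\eta_iu)-\eta_iu\|_{L^1},\ \ \|\rho_{\varepsilon_i}\ast(u\,\partial_{x_j}\eta_i)-u\,\partial_{x_j}\eta_i\|_{L^1},\ \ \|[b_j,\rho_{\varepsilon_i}\ast](\eta_iu)\|_{L^1}$$
is bounded by $\delta/2^{i+1}$ for every $j=1,\dots,d$. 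Define $u_\delta:=\sum_i\rho_{\varepsilon_i}\ast(\eta_iu)$, which is a locally finite sum and thus lies in $C^\infty(\Omega)$.

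Convergence $u_\delta\to u$ in $L^1(\Omega)$ follows from $u=\sum_i\eta_iu$ and the first of the three estimates. For the variation, I would expand
$$A^{\pm}_{j,\alpha}u_\delta=\sum_i\Bigl[\rho_{\varepsilon_i}\ast(\eta_iA^{\pm}_{j,\alpha}u)+\rho_{\varepsilon_i}\ast(u\,\partial_{x_j}\eta_i)\pm[b_j,\rho_{\varepsilon_i}\ast](\eta_iu)\Bigr],$$
(interpreting the first summand distributionally by testing against $\varphi\in\mathcal F(\Omega)$), use $\sum_i\partial_{x_j}\eta_i\equiv0$ so that $\sum_iu\,\partial_{x_j}\eta_i\equiv 0$ and the second summand reduces to an $L^1$-error of total size $\le d\delta$, transfer the mollification onto $\varphi$ by Fubini, and exploit the uniformly bounded overlap of $\{A_i\}$ to conclude
$$\int_\Omega|\nabla_{\mathcal{H}_\alpha}u_\delta|\,dx\le \|\nabla_{\mathcal{H}_\alpha}u\|(\Omega)+C\delta.$$
Combined with the lower semicontinuity in Lemma \ref{lem1.1}(ii), this yields $\int|\nabla_{\mathcal{H}_\alpha}u_\delta|\,dx\to\|\nabla_{\mathcal{H}_\alpha}u\|(\Omega)$ as $\delta\to 0$. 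When $\Omega$ is bounded the $u_\delta$ already lie in $C^\infty_c(\Omega)$; otherwise a further cutoff $\chi_R u_\delta$ with $R\to\infty$ enforces the compact-support requirement at the cost of an $L^1$-negligible term in both $u$ and its $\alpha$-Hermite gradient, since $u_\delta\in L^1$ and $|u_\delta\,\nabla\chi_R|$ has $L^1$-norm $O(1/R)$.

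The main technical obstacle is the commutator bound $\|[b_j,\rho_{\varepsilon}\ast](\eta_iu)\|_{L^1}\to0$ as $\varepsilon\to 0$. Because $|b_j(x)|\lesssim|x|^{\alpha/2}$, the coefficient $b_j$ is bounded and uniformly continuous on each compact $A_i$, so Friedrichs' commutator lemma applies and a diagonal choice of $\varepsilon_i$ keeps the summed commutator error $O(\delta)$. Apart from this additional ingredient, the argument is structurally identical to the classical Meyers--Serrin approximation for $BV$.
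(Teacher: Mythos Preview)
Your proposal is correct and follows essentially the same Meyers--Serrin/Anzellotti--Giaquinta scheme as the paper: exhaustion of $\Omega$, partition of unity with bounded overlap, mollification of each piece, the cancellation $\sum_i\partial_{x_j}\eta_i\equiv 0$, and lower semicontinuity for the reverse inequality. The only substantive difference is how the potential term is handled: the paper expands $\int\phi_\varepsilon\,\mathrm{div}_{\mathcal H_\alpha}\varphi$ directly and bounds the error by the Lipschitz constant of $x\mapsto x_k|x|^{(\alpha-2)/2}$ on the bounded domain, whereas you package the same error as a Friedrichs commutator $[b_j,\rho_\varepsilon\ast]$ and invoke only continuity of $b_j$ on compacta---this is the same estimate in slightly different clothing, and your version has the minor advantage of not requiring Lipschitz regularity of $b_j$ (which in fact fails at the origin for $1<\alpha<2$).
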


\begin{proof}
We adopt the method similar to the proof of \cite[Theorem 5.3]{EG}.
Via the semicontinuity property of Lemma~\ref{lem1.1}, we only need
to verify that, for every $\eps
>0$, there exists a function $u_{\eps} \in C^{\infty}(\Omega)$ such
that
\begin{equation}\label{enough}
\int_{\Omega}|u(x)-u_\eps(x)| dx < \eps\ \quad \&\   \quad
\parallel \nabla_{\mathcal{H}_{\alpha}} u_\varepsilon\parallel(\Omega) < \parallel \nabla_{\mathcal{H}_{\alpha}} u\parallel(\Omega) + \eps.
\end{equation}

Given a positive integer $m$, let $\{\Omega_{j}\}_{j\in\N}$ be a
sequence of open sets  defined as
$$
\Omega_j := \Big\{ x \in \Omega \,\,|\,\, \textrm{dist}(x,\partial
\Omega) > \dfrac{1}{m+j}\Big\} \cap B(0,k+m), \qquad j \in\N,
$$
\noindent where $B(0,k+m)$ denotes the open ball of center $0$ and
radius $k+m$,   and   $\mathrm{dist}(x,\partial \Omega)$ represents
the Euclidean  distance from $x$ to $ \partial \Omega$.  Since
$\parallel \nabla_{\mathcal{H}_{\alpha}} u\parallel(\cdot)$ is a
Radon measure, given $\eps>0$ we can choose $m \in \N$ so large that
\begin{equation}\label{misura}
\parallel \nabla_{\mathcal{H}_{\alpha}} u\parallel(\Omega \setminus \Omega_0) < \eps.
\end{equation}
In fact, we find that the sequence of open sets $\{\Omega_{j}\}$
satisfy the following properties:
$$\left\{\begin{aligned}
&\Omega_{j} \subset \Omega_{j+1} \subset \Omega\quad  \forall\ j\in\mathbb N;\\
&\bigcup_{j=0}^\infty
\Omega_{j} = \Omega.
 \end{aligned}\right.$$

Set $ U_0 := \Omega_0$ and $U_j := \Omega_{j+1} \setminus
\overline{\Omega}_{j-1}$  for $j\geq 1$.  By
standard results (cf. \cite{EG}), there exists a partition of unity
related to the covering $\{U_j\}_{j\in\N}$, which means that there
exists $\{ f_{j} \}_{j\in\N} \in C^{\infty}_{c}(U_j)$ such that
$0\leq f_j \leq 1$ for every $j \geq 0$ and
$\sum\limits_{j=0}^\infty f_j = 1$ on $\Omega$.
 In particular, the following fact is valid:
\begin{equation}\label{partition}
\sum_{j=0}^{\infty} \nabla f_j = 0 \quad\,\, \textrm{on $\Omega$}.
\end{equation}

Let $\eta\in C^{\infty}_c(\R^d)$ be a radial nonnegative function
satisfying $\int_{\R^d} \eta(x) dx=1$ and ${\rm supp}(\eta)\subset
B(0,1)$. Given $\varepsilon>0$ and $u\in L^1(\Omega; \mathbb{R})$,
extended to zero out of $\Omega$, we define the usual regularization
\begin{align*}\label{mollif}
    u_\varepsilon(x):=\frac{1}{\varepsilon^d}\int_{\mathbb R^d}\eta\Big(\frac{x-y}{\varepsilon}\Big) u(y) dy=
    \frac{1}{\varepsilon^d}\int_{B(x,\eps)}\eta\Big(\frac{x-y}{\varepsilon}\Big) u(y) dy.
\end{align*}

For every $j \geq 0$, there exists $0<\eps_j<\eps$ such that
\begin{equation}\label{eq-ast}
\left\{\begin{aligned}
\hspace{-0.5cm}\mathrm{supp}\big((f_ju)_{\varepsilon_j}\big)\subseteq U_j;　\\
\int_{\Omega}\mid (f_ju)_{\varepsilon_j}-f_ju \mid dx<\varepsilon 2^{-(j+1)};　\ \ \ \\
\hspace{-3.5cm}\int_{\Omega}\mid (u\nabla
f_j)_{\varepsilon_j}-u\nabla f_j \mid dx<\varepsilon 2^{-(j+1)}.
 \end{aligned}\right.
 \end{equation}
Define $\phi_\varepsilon:=\sum\limits^\infty_{j=0}(uf_j)_{\varepsilon_j}$.
Since the sum is locally finite, then we conclude that
$\phi_\varepsilon\in C^\infty(\Omega)$ and $u=\sum\limits^\infty_{j=0}uf_j$
pointwise. By a direct computation, we can get
\begin{eqnarray*}
&&\int_{\Omega}\phi_\varepsilon(x)
\mathrm{div}_{\mathcal{H}_{\alpha}}
\varphi(x)dx\\
&&=\sum^\infty_{j=0}\int_{\Omega} \big((uf_j)\ast
\eta_{\varepsilon_j}\big)(x)\mathrm{div}_{\mathcal{H}_{\alpha}} \varphi(x)dx\\
&&=\sum^\infty_{j=0}\int_{\Omega}\int_{\Omega}\frac{1}{\varepsilon^d_j}
\eta\Big(\frac{x-y}{\varepsilon_j}\Big)
u(y)f_j(y) \mathrm{div}_{\mathcal{H}_{\alpha}} \varphi(x)dydx\\
&&=
\sum^\infty_{j=0}\int_{\Omega}\int_{\Omega}\frac{1}{\varepsilon^d_j}
\eta\Big(\frac{x-y}{\varepsilon_j}\Big) u(y)f_j(y)
\Big[A^{+}_d\varphi_1+\cdots+ A^{+}_1 \varphi_d+A^{-}_{1}
\varphi_{d+1}+\cdots+ A^{-}_{d}\varphi_{2d}\Big]dydx\\
&&:=I+II,
\end{eqnarray*}
where
$$I:=\sum^\infty_{j=0}\int_{\Omega}\int_{\Omega}\frac{1}{\varepsilon^d_j}
\eta\Big(\frac{x-y}{\varepsilon_j}\Big) u(y)f_j(y)
\Big[\sum^d_{k=1}\frac{\partial}{\partial
x_k}\varphi_{d-k+1}(x)+\sum^d_{k=1} \frac{\partial}{\partial
x_k}\varphi_{d+1+k}(x)\Big]dydx$$
and
\begin{eqnarray*}
II&:=&\sum^\infty_{j=0}\int_{\Omega}\int_{\Omega}\frac{1}{\varepsilon^d_j}
\eta\Big(\frac{x-y}{\varepsilon_j}\Big) u(y)f_j(y)
\Big[\sum^d_{k=1}
\sqrt{\alpha-1}x_k|x|^{(\alpha-2)/2}\varphi_{d-k+1}(x)\\
&&-\sum^d_{k=1}
 \sqrt{\alpha-1}x_k|x|^{(\alpha-2)/2}\varphi_{d+1+k}(x)\Big]dydx.
\end{eqnarray*}

As for $I$, let
$$I_{1}:=\sum^\infty_{j=0}\int_{\Omega}
 u(y)
 \Big[\sum^d_{k=1}\frac{\partial}{\partial
y_k}\Big(f_j(y)(\varphi_{d-k+1}\ast
\eta_{\varepsilon_j}(y))\Big)+\sum^d_{k=1} \frac{\partial}{\partial
y_k}\Big(f_j(y)(\varphi_{d+1+k}\ast
\eta_{\varepsilon_j}(y))\Big)\Big]dy$$
and
\begin{eqnarray*}
I_{2}&:=& -\sum^\infty_{j=0}\int_{\Omega}
 \Big\{\Big[\sum^d_{k=1}\big(u(y)\frac{\partial}{\partial
y_k}f_j(y)\big)\ast
\eta_{\varepsilon_j}(y))-u\frac{\partial}{\partial
y_k}f_j(y)\Big]\varphi_{d-k+1}(y)\\
 &&-\Big[\sum^d_{k=1}\big(u(y)
\frac{\partial}{\partial y_k}f_j(y)\big)\ast
\eta_{\varepsilon_j}(y))-u\frac{\partial}{\partial
y_k}f_j(y)\Big]\varphi_{d+1+k}(y)\Big\}dy.
\end{eqnarray*}
We can get
\begin{eqnarray*}
I&=&\sum^\infty_{j=0}\int_{\Omega}
 u(y)f_j(y)
 \big(\sum^d_{k=1}\frac{\partial}{\partial
y_k}(\varphi_{d-k+1}\ast \eta_{\varepsilon_j}(y))+\sum^d_{k=1}
\frac{\partial}{\partial y_k}(\varphi_{d+1+k}\ast
\eta_{\varepsilon_j}(y))\big)dy\\
&=& \sum^\infty_{j=0}\int_{\Omega}
 u(y)
 \big(\sum^d_{k=1}\frac{\partial}{\partial
y_k}\big(f_j(y)(\varphi_{d-k+1}\ast
\eta_{\varepsilon_j}(y))\big)+\sum^d_{k=1} \frac{\partial}{\partial
y_k}\big(f_j(y)(\varphi_{d+1+k}\ast
\eta_{\varepsilon_j}(y))\big)\big)dy\\
&& -\sum^\infty_{j=0}\int_{\Omega}
 u(y)
 \big(\sum^d_{k=1}\frac{\partial}{\partial
y_k}\big(f_j(y)\big)(\varphi_{d-k+1}\ast
\eta_{\varepsilon_j}(y))+\sum^d_{k=1} \frac{\partial}{\partial
y_k}\big(f_j(y)\big)(\varphi_{d+1+k}\ast
\eta_{\varepsilon_j}(y))\big)dy\\
&=& I_1+I_2,
\end{eqnarray*}
where we have used (\ref{partition}) in
the last equality. When $\|\varphi\|_{L^\infty}\le 1$, it holds that
$$\begin{cases}
|\big(f_j(y)\big)(\varphi_{d-k+1}\ast \eta_{\varepsilon_j}(y))|\leq
1,\\
|\big(f_j(y)\big)(\varphi_{d+k+1}\ast
\eta_{\varepsilon_j}(y))|\leq 1
\end{cases}$$
 for all $j\ge 0$ and
$k=1,2,\ldots,d$. Moreover,  it follows from (\ref{eq-ast}) that
$|I_2|<\varepsilon$.

For $II$, a direct computation gives
\begin{eqnarray*}
II&=&\sum^\infty_{j=0}\int_{\Omega}\int_{\Omega}\frac{1}{\varepsilon^d_j}
\eta\Big(\frac{x-y}{\varepsilon_j}\Big) u(y)f_j(y) (\sum^d_{k=1}
\sqrt{\alpha-1}y_k|y|^{(\alpha-2)/2}
\varphi_{d-k+1}(x)\\
&&-\sum^d_{k=1}
 \sqrt{\alpha-1}y_k|y|^{(\alpha-2)/2}\varphi_{d+1+k}(x))dydx\\
&&+\sum^\infty_{j=0}\int_{\Omega}\int_{\Omega}\frac{1}{\varepsilon^d_j}
\eta\Big(\frac{x-y}{\varepsilon_j}\Big) u(y)f_j(y) (\sum^d_{k=1}
\sqrt{\alpha-1}{
(x_k|x|^{(\alpha-2)/2}-y_k|y|^{(\alpha-2)/2}})\varphi_{d-k+1}(x)\\
&&-\sum^d_{k=1}
 \sqrt{\alpha-1}{
(x_k|x|^{(\alpha-2)/2}-y_k|y|^{(\alpha-2)/2}})\varphi_{d+1+k}(x))dydx.
\end{eqnarray*}
Changing the order of integration, we   get
\begin{eqnarray*}
II&=&\sum^\infty_{j=0}\int_{\Omega}  u(y) \Big[\sum^d_{k=1}
 \sqrt{\alpha-1}y_k|y|^{(\alpha-2)/2}f_j(y)(\varphi_{d-k+1}\ast
\eta_{\varepsilon_j}(y))\\
&&-\sum^d_{k=1}
 \sqrt{\alpha-1}y_k|y|^{(\alpha-2)/2}f_j(y)(\varphi_{d+k+1}\ast
\eta_{\varepsilon_j}(y))\Big]dy\\
&&+\sum^\infty_{j=0}\int_{\Omega}\int_{\Omega}\frac{1}{\varepsilon^d_j}
\eta\Big(\frac{x-y}{\varepsilon_j}\Big) u(y)f_j(y) \Big[\sum^d_{k=1}
\sqrt{\alpha-1}{
(x_k|x|^{(\alpha-2)/2}-y_k|y|^{(\alpha-2)/2}})\varphi_{d-k+1}(x)\\&&-\sum^d_{k=1}
 \sqrt{\alpha-1}{
(x_k|x|^{(\alpha-2)/2}-y_k|y|^{(\alpha-2)/2}})\varphi_{d+1+k}(x)\Big]dydx.
\end{eqnarray*}
 Therefore, the above estimate for the term  $I_{2}$ indicates that
$$\Big|\int_{\Omega}\phi_\varepsilon(x)
\mathrm{div}_{\mathcal{H}_{\alpha}} \varphi(x)dx\Big|=
|I_{1}+I_{2}+II|\leq J_{1}+J_{2}+\varepsilon,$$ where
\begin{eqnarray*}
J_{1}&:=&\Big|\Big[\sum^\infty_{j=0}\int_{\Omega}
 u(y)
 \big(\sum^d_{k=1}\frac{\partial}{\partial
y_k}\big(f_j(y)(\varphi_{d-k+1}\ast
\eta_{\varepsilon_j}(y))\big)+\sum^d_{k=1} \frac{\partial}{\partial
y_k}\big(f_j(y)(\varphi_{d+1+k}\ast
\eta_{\varepsilon_j}(y))\big)\big)dy\Big]\\
&&+\Big[\sum^\infty_{j=0}\int_{\Omega}  u(y) (\sum^d_{k=1}
\sqrt{\alpha-1}y_k|y|^{(\alpha-2)/2}f_j(y)(\varphi_{d-k+1}\ast
\eta_{\varepsilon_j}(y))\\
&&-\sum^d_{k=1}
 \sqrt{\alpha-1}y_k|y|^{(\alpha-2)/2}f_j(y)(\varphi_{d+k+1}\ast
\eta_{\varepsilon_j}(y)))dydx\Big]\Big|
\end{eqnarray*}
 and
\begin{eqnarray*}
J_{2}&:=&\Big|\sum^\infty_{j=0}\int_{\Omega}\int_{\Omega}\frac{1}{\varepsilon^d_j}
\eta\Big(\frac{x-y}{\varepsilon_j}\Big) u(y)f_j(y) \Big[\sum^d_{k=1}
\sqrt{\alpha-1}{
(x_k|x|^{(\alpha-2)/2}-y_k|y|^{(\alpha-2)/2}})\varphi_{d-k+1}(x)\\
&&-\sum^d_{k=1}
 \sqrt{\alpha-1}{
(x_k|x|^{(\alpha-2)/2}-y_k|y|^{(\alpha-2)/2}})\varphi_{d+1+k}(x)\Big]dydx\Big|.
\end{eqnarray*}

   Note that, by the construction of $U_j$, every
point $x\in\Omega$ belongs to at most three of the sets $U_j$.
Similar  to \cite[Section 5.2.2, Theorem 2]{EG}, we know that
\begin{eqnarray*}
J_1&\le&\Big|\Big\{\int_{\Omega}
 u(y)\Big[\sum^d_{k=1}\frac{\partial}{\partial
y_k}\Big(f_0(y)(\varphi_{d-k+1}\ast
\eta_{\varepsilon_0}(y))\Big)+\sum^d_{k=1} \frac{\partial}{\partial
y_k}\Big(f_0(y)(\varphi_{d+1+k}\ast
\eta_{\varepsilon_0}(y))\Big)\Big]dy\Big\}\\
&&-\Big\{\int_{\Omega}  u(y) \Big[\sum^d_{k=1}
\sqrt{\alpha-1}y_k|y|^{(\alpha-2)/2}f_0(y)(\varphi_{d-k+1}\ast
\eta_{\varepsilon_0}(y))\\
&&-\sum^d_{k=1}
 \sqrt{\alpha-1}y_k|y|^{(\alpha-2)/2}f_0(y)(\varphi_{d+k+1}\ast
\eta_{\varepsilon_0}(y))\Big]dydx\Big\}\Big|\\
&&+\Big|\Big\{\sum^\infty_{j=1}\int_{\Omega}
 u(y)
 \Big[\sum^d_{k=1}\frac{\partial}{\partial
y_k}\Big(f_j(y)(\varphi_{d-k+1}\ast
\eta_{\varepsilon_j}(y))\Big)+\sum^d_{k=1} \frac{\partial}{\partial
y_k}\Big(f_j(y)(\varphi_{d+1+k}\ast
\eta_{\varepsilon_j}(y))\Big)\Big]dy\Big\}\\
&&-\Big\{\sum^\infty_{j=1}\int_{\Omega}  u(y) \Big[\sum^d_{k=1}
\sqrt{\alpha-1}y_k|y|^{(\alpha-2)/2}f_j(y)(\varphi_{d-k+1}\ast
\eta_{\varepsilon_j}(y))\\
&&-\sum^d_{k=1}
 \sqrt{\alpha-1}y_k|y|^{(\alpha-2)/2}f_j(y)(\varphi_{d+k+1}\ast
\eta_{\varepsilon_j}(y))\Big]dydx\Big\}\Big|\\
&\lesssim&  \parallel \nabla_{\mathcal{H}_{\alpha}} u
\parallel(\Omega)+ \sum^\infty_{j=1}\parallel \nabla_{\mathcal{H}_{\alpha}} u
\parallel(U_j)\\
&\lesssim&  \parallel \nabla_{\mathcal{H}_{\alpha}} u
\parallel(\Omega)+3 \parallel \nabla_{\mathcal{H}_{\alpha}} u
\parallel(\Omega\backslash \Omega_0)\\
&\lesssim& \parallel \nabla_{\mathcal{H}_{\alpha}} u
\parallel(\Omega)+3 \varepsilon,
\end{eqnarray*}
where  we have used (\ref{misura}) in the last inequality.

Noting that $\psi(x)=x_k|x|^{(\alpha-2)/2}$ is Lipschitz
continuous, $\|\varphi\|\le 1$ and $\mathrm{supp}(\eta)\subseteq
B_1(0)$, then we have
\begin{eqnarray*}
J_2&\lesssim&  \varepsilon\mathrm{Lip}(\psi,\Omega)
\int_{\mathbb{R}^d}\eta(z)dz \int_{\Omega}
\sum^\infty_{j=1}|f_j(y)||u(y)|dy\lesssim  \varepsilon,
\end{eqnarray*}
where $\mathrm{Lip}(\psi,\Omega)$ denotes the Lipschitz constant of $\psi$.
By taking the supremum over $\varphi$ and the arbitrariness of
$\varepsilon>0$,  we conclude that (\ref{enough}) holds true.

\end{proof}

Moreover, we have the following max-min property of the
$\alpha$-Hermite variation.

\begin{theorem}\label{theorem2}
Let $u,v\in L^1(\Omega)$. Then
$$\parallel  \nabla_{\mathcal{H}_{\alpha}} \max\{u, v\}\parallel(\Omega)+\parallel  \nabla_{\mathcal{H}_{\alpha}} \min\{u, v\}\parallel(\Omega)\le \parallel  \nabla_{\mathcal{H}_{\alpha}}  u\parallel(\Omega)
+\parallel  \nabla_{\mathcal{H}_{\alpha}}  v\parallel(\Omega).$$
\end{theorem}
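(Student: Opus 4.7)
The plan is to reduce to smooth, compactly supported approximations and exploit the pointwise a.e.\ identity
\begin{equation*}
|\nabla_{\mathcal{H}_{\alpha}}\max\{u,v\}|+|\nabla_{\mathcal{H}_{\alpha}}\min\{u,v\}|=|\nabla_{\mathcal{H}_{\alpha}}u|+|\nabla_{\mathcal{H}_{\alpha}}v|,
\end{equation*}
which is available once the two functions are sufficiently regular. If either $u$ or $v$ is not in $\mathcal{BV}_{\mathcal{H}_{\alpha}}(\Omega)$ the right-hand side is infinite and there is nothing to prove, so I assume both are Hermite BV. By Theorem~\ref{propos4} I pick sequences $\{u_h\},\{v_h\}\subset C_{c}^{\infty}(\Omega)$ with $u_h\to u$, $v_h\to v$ in $L^{1}(\Omega)$ and with $\int_{\Omega}|\nabla_{\mathcal{H}_{\alpha}}u_h|\,dx\to\|\nabla_{\mathcal{H}_{\alpha}}u\|(\Omega)$ and the analogous statement for $v_h$.

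For each $h$ the functions $\max\{u_h,v_h\}$ and $\min\{u_h,v_h\}$ are Lipschitz with compact support, and since $\alpha\geq 1$ the multiplier $x_i|x|^{(\alpha-2)/2}$ behaves like $|x|^{\alpha/2}$ near the origin and is locally bounded, so both functions belong to $W^{1,1}_{\mathcal{H}_{\alpha}}(\Omega)$. On $\{u_h>v_h\}$ one has $\max=u_h$ and $\min=v_h$, hence $A^{\pm}_{i,\alpha}\max=A^{\pm}_{i,\alpha}u_h$ and $A^{\pm}_{i,\alpha}\min=A^{\pm}_{i,\alpha}v_h$ (the multiplicative part depends only on the pointwise value, which is inherited). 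The symmetric statement holds on $\{u_h<v_h\}$, and on the coincidence set $\{u_h=v_h\}$ the standard Sobolev fact $\nabla u_h=\nabla v_h$ a.e., combined with the common pointwise value, forces $|\nabla_{\mathcal{H}_{\alpha}}u_h|=|\nabla_{\mathcal{H}_{\alpha}}v_h|$ a.e.\ there and pins down both $\nabla_{\mathcal{H}_{\alpha}}\max$ and $\nabla_{\mathcal{H}_{\alpha}}\min$. Adding the magnitudes case by case yields the displayed pointwise identity a.e.\ on $\Omega$. Integrating and invoking Lemma~\ref{lem1.1}(i) produces the equality
\begin{equation*}
\|\nabla_{\mathcal{H}_{\alpha}}\max\{u_h,v_h\}\|(\Omega)+\|\nabla_{\mathcal{H}_{\alpha}}\min\{u_h,v_h\}\|(\Omega)=\int_{\Omega}\bigl(|\nabla_{\mathcal{H}_{\alpha}}u_h|+|\nabla_{\mathcal{H}_{\alpha}}v_h|\bigr)\,dx.
\end{equation*}

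To pass to the limit I use the contraction $|\max\{a,b\}-\max\{c,d\}|\leq|a-c|+|b-d|$ (and the analogue for $\min$) to conclude that $\max\{u_h,v_h\}\to\max\{u,v\}$ and $\min\{u_h,v_h\}\to\min\{u,v\}$ in $L^{1}(\Omega)$. The lower semicontinuity from Lemma~\ref{lem1.1}(ii), applied to each of the two terms, bounds the $\liminf$ of the left-hand side above from below by $\|\nabla_{\mathcal{H}_{\alpha}}\max\{u,v\}\|(\Omega)+\|\nabla_{\mathcal{H}_{\alpha}}\min\{u,v\}\|(\Omega)$, while the right-hand side converges to $\|\nabla_{\mathcal{H}_{\alpha}}u\|(\Omega)+\|\nabla_{\mathcal{H}_{\alpha}}v\|(\Omega)$ by the defining property of the approximating sequences, which delivers the desired inequality. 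The principal obstacle is the pointwise identity on the coincidence set $\{u_h=v_h\}$, where $\max$ and $\min$ fail to be $C^{1}$; it is resolved by the Sobolev/Lipschitz fact that the weak gradients of two functions agree a.e.\ on the set where the functions themselves agree, together with the observation that the multiplicative part of $A^{\pm}_{i,\alpha}$ is a position-dependent multiplication by this common value and therefore adds no complication.
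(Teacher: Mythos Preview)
Your proposal is correct and follows essentially the same route as the paper: approximate $u$ and $v$ by $C^\infty_c$ functions via Theorem~\ref{propos4}, use the pointwise relation between $|\nabla_{\mathcal{H}_\alpha}\max|+|\nabla_{\mathcal{H}_\alpha}\min|$ and $|\nabla_{\mathcal{H}_\alpha}u_h|+|\nabla_{\mathcal{H}_\alpha}v_h|$, pass to the limit using $L^1$ convergence of $\max$ and $\min$ together with the lower semicontinuity of Lemma~\ref{lem1.1}(ii). You supply more detail than the paper on why $\max\{u_h,v_h\}$ and $\min\{u_h,v_h\}$ lie in $W^{1,1}_{\mathcal{H}_\alpha}(\Omega)$ and on the coincidence set $\{u_h=v_h\}$, but the architecture is identical.
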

\begin{proof} Without loss of generality,  we may assume
$$
\parallel  \nabla_{\mathcal{H}_{\alpha}}  u\parallel(\Omega) +
\parallel  \nabla_{\mathcal{H}_{\alpha}}  v\parallel(\Omega)<\infty.
$$
Take two functions
$$u_{h}, v_{h}\in C^\infty_c(\Omega)\cap
\mathcal{BV}_{\mathcal{H}_{\alpha}}(\Omega), h=1,2,\ldots,$$  such that
$$
\begin{cases}
u_{h}\rightarrow
u, v_{h}\rightarrow v\quad\hbox{in}\quad L^1(\Omega);\\
\int_{\Omega} |\nabla_{\mathcal{H}_{\alpha}} u_{h}(x)|dx\rightarrow
\parallel  \nabla_{\mathcal{H}_{\alpha}}  u\parallel(\Omega);\\
\int_{\Omega} |\nabla_{\mathcal{H}_{\alpha}} v_{h}(x)|dx\rightarrow
\parallel  \nabla_{\mathcal{H}_{\alpha}}  v\parallel(\Omega).
\end{cases}
$$
Since
$$
\max\{u_{h},v_{h}\}\rightarrow \max\{u,v\}\quad\&\quad
\min\{u_{h},v_{h}\}\rightarrow \min\{u,v\}\quad\hbox{in}\quad
L^1(\Omega),
$$
it follows that

\begin{eqnarray*}
\parallel  \nabla_{\mathcal{H}_{\alpha}} \max\{u, v\}\parallel(\Omega)&+&\parallel  \nabla_{\mathcal{H}_{\alpha}} \min\{u, v\}\parallel(\Omega)\\
&\le&\lim\inf_{h\rightarrow\infty}\int_{\Omega}
|\nabla_{\mathcal{H}_{\alpha}}
\max\{u_{h},v_{h}\}|dx+\lim\inf_{h\rightarrow\infty}\int_{\Omega}
|\nabla_{\mathcal{H}_{\alpha}}
\min\{u_{h},v_{h}\}|dx\\
&\le& \lim\inf_{h\rightarrow\infty}\big(\int_{\Omega}
|\nabla_{\mathcal{H}_{\alpha}} \max\{u_{h},v_{h}\}|dx+ \int_{\Omega}
|\nabla_{\mathcal{H}_{\alpha}}
\min\{u_{h},v_{h}\}|dx\big)\\
&\le& \lim_{h\rightarrow\infty}\int_{\Omega}
|\nabla_{\mathcal{H}_{\alpha}}
u_{h}(x)|dx+\lim_{h\rightarrow\infty}\int_{\Omega}
|\nabla_{\mathcal{H}_{\alpha}}
v_{h}(x)|dx\\
&=&\parallel  \nabla_{\mathcal{H}_{\alpha}}  u\parallel(\Omega)
+\parallel \nabla_{\mathcal{H}_{\alpha}} v\parallel(\Omega).
\end{eqnarray*}

\end{proof}

\subsection{$\alpha$-Hermite perimeter}\label{sec-2.2}
In this subsection, we introduce two kinds of  new perimeters: the
$\alpha$-Hermite perimeter and the restricted $\alpha$-Hermite
perimeter.  We also establish related theories  for them.

The $\alpha$-Hermite perimeter of $E\subseteq \Omega$ can be defined
as follows:
\begin{equation}\label{eq-2.1}
P_{\mathcal{H}_{\alpha}}(E,\Omega)=\parallel \nabla_{\mathcal{H}_{\alpha}} 1_E \parallel(\Omega) =
\sup_{\varphi\in \mathcal {F}(\Omega)}\Big\{\int_E
\mathrm{div}_{\mathcal{H}_{\alpha}}\varphi(x)dx \Big\},
\end{equation}
  where
$1_E$ denotes the characteristic function of $E$.  It should be
noted that for $\alpha=1$
$$P_{\mathcal{H}_{\alpha}}(E,\Omega)=2P(E,\Omega),$$ where $P(E,\Omega)$ is
exactly the classical perimeter of $E\subseteq \Omega$. In
particular, we shall also write $$P_{\mathcal{H}_{\alpha}}(E,
\mathbb{R}^d)= P_{\mathcal{H}_{\alpha}}(E).$$

The following conclusion is a direct corollary of Lemma
\ref{lem1.1}.

\begin{corollary}\label{lem1.1-1}
The $\alpha$-Hermite perimeter has the following lower semicontinuity:
if $$  1_{E_k}\rightarrow 1_E \ \ \text{in}\ \
L^1_{\mathrm{loc}}(\Omega),
$$ where $E_k$ and $E$ are subsets of $\Omega$ for $k=1,2,\ldots$, then
\begin{equation}\label{equation1}\lim\inf_{k\rightarrow\infty}P_{\mathcal{H}_{\alpha}}(E_k,\Omega)\ge P_{\mathcal{H}_{\alpha}}(E,\Omega).
\end{equation}
\end{corollary}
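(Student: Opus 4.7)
The plan is to reduce the claim directly to Lemma~\ref{lem1.1}(ii) applied to characteristic functions. By the defining identity \eqref{eq-2.1}, one has $P_{\mathcal{H}_{\alpha}}(E_k,\Omega)=\|\nabla_{\mathcal{H}_{\alpha}} 1_{E_k}\|(\Omega)$ and $P_{\mathcal{H}_{\alpha}}(E,\Omega)=\|\nabla_{\mathcal{H}_{\alpha}} 1_{E}\|(\Omega)$, and the hypothesis $1_{E_k}\to 1_E$ in $L^1_{\mathrm{loc}}(\Omega)$ is exactly the convergence assumption required by Lemma~\ref{lem1.1}(ii). Thus the corollary amounts to a relabeling, provided one is careful with integrability and the trivial cases.

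First I would dispose of the case $\liminf_{k\to\infty} P_{\mathcal{H}_{\alpha}}(E_k,\Omega) = +\infty$, where the inequality \eqref{equation1} is automatic. In the remaining case, I pass to a subsequence $\{E_{k_j}\}$ achieving the $\liminf$ with $\|\nabla_{\mathcal{H}_{\alpha}} 1_{E_{k_j}}\|(\Omega) < \infty$ for all $j$, so that $1_{E_{k_j}} \in \mathcal{BV}_{\mathcal{H}_{\alpha},\mathrm{loc}}(\Omega)$; note that indicator functions are automatically in $L^1_{\mathrm{loc}}(\Omega)$, so no further integrability condition is needed. Applying the conclusion of Lemma~\ref{lem1.1}(ii) to this subsequence and the limit $1_E$ yields
\[
\liminf_{j\to\infty} \|\nabla_{\mathcal{H}_{\alpha}} 1_{E_{k_j}}\|(\Omega) \;\geq\; \|\nabla_{\mathcal{H}_{\alpha}} 1_E\|(\Omega),
\]
which translates to $P_{\mathcal{H}_{\alpha}}(E,\Omega) \leq \liminf_{k\to\infty} P_{\mathcal{H}_{\alpha}}(E_k,\Omega)$, as claimed.

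The only mild subtlety—hardly an obstacle—is that Lemma~\ref{lem1.1}(ii) is stated for functions in $\mathcal{BV}_{\mathcal{H}_{\alpha}}(\Omega)$, which formally requires global $L^1(\Omega)$ membership, whereas $1_E$ need only be locally $L^1$ when $\Omega$ is unbounded. However, inspecting the proof of Lemma~\ref{lem1.1}(ii), the only ingredients used are test fields $\varphi\in C^{\infty}_c(\Omega;\mathbb{R}^{2d})$ with $\|\varphi\|_{L^\infty}\le 1$ and Fatou's lemma applied to $\int_\Omega f_k\,\mathrm{div}_{\mathcal{H}_{\alpha}}\varphi\,dx$. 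Since $\mathrm{supp}(\varphi)$ is compact, the convergence $1_{E_{k_j}} \to 1_E$ in $L^1_{\mathrm{loc}}(\Omega)$ is enough to pass to the limit, and then taking the supremum over such $\varphi$ gives the stated lower semicontinuity for the perimeter without any modification.
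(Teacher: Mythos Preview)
Your proposal is correct and follows exactly the paper's approach: the paper simply states that this is a direct corollary of Lemma~\ref{lem1.1} (applied with $f_k=1_{E_k}$ and $f=1_E$), and your write-up fleshes out precisely that reduction, including the harmless $L^1$ versus $L^1_{\mathrm{loc}}$ point.
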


For any compact subsets $E, F$ in $\Omega$, via choosing $u=1_E$ and
$v=1_F$, the  following lemma can be deduced from Theorem
\ref{theorem2} immediately.

\begin{lemma}\label{lem3.6} For any  subsets $E$ in $
\Omega$,
$$P_{\mathcal{H}_{\alpha}}(E\cap F, \Omega)+P_{\mathcal{H}_{\alpha}}(E\cup F, \Omega)\le P_{\mathcal{H}_{\alpha}}(E, \Omega)+
P_{\mathcal{H}_{\alpha}}( F, \Omega).$$
\end{lemma}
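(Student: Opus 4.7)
The plan is to reduce this directly to Theorem \ref{theorem2} (the max/min property of the $\alpha$-Hermite variation) by choosing $u = 1_E$ and $v = 1_F$. The two pointwise identities
$$\max\{1_E, 1_F\} = 1_{E\cup F}\quad\text{and}\quad \min\{1_E, 1_F\} = 1_{E\cap F}$$
will translate the functional inequality of Theorem \ref{theorem2} into the desired perimeter inequality once we invoke the definition $P_{\mathcal{H}_{\alpha}}(A,\Omega) = \|\nabla_{\mathcal{H}_{\alpha}} 1_A\|(\Omega)$ from \eqref{eq-2.1}.

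Concretely, I would proceed in two short steps. First, I would dispose of the trivial case where $P_{\mathcal{H}_{\alpha}}(E,\Omega)=\infty$ or $P_{\mathcal{H}_{\alpha}}(F,\Omega)=\infty$, in which case the right-hand side is infinite and the inequality holds automatically. Otherwise, $1_E$ and $1_F$ both lie in $L^1_{\loc}(\Omega)$ with finite $\alpha$-Hermite variation, so Theorem \ref{theorem2} applies to $u=1_E$ and $v=1_F$. Second, I would substitute the two indicator identities above into the conclusion of Theorem \ref{theorem2}, obtaining
$$\|\nabla_{\mathcal{H}_{\alpha}} 1_{E\cup F}\|(\Omega) + \|\nabla_{\mathcal{H}_{\alpha}} 1_{E\cap F}\|(\Omega) \le \|\nabla_{\mathcal{H}_{\alpha}} 1_E\|(\Omega) + \|\nabla_{\mathcal{H}_{\alpha}} 1_F\|(\Omega),$$
which is exactly the claimed submodularity of $P_{\mathcal{H}_{\alpha}}(\cdot,\Omega)$.

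Since the lemma is presented in the paper precisely as an immediate corollary of Theorem \ref{theorem2}, I do not expect any real obstacle; the only mild subtlety is ensuring that the hypothesis $u,v\in L^1(\Omega)$ of Theorem \ref{theorem2} is legitimately met for indicator functions, which is automatic as soon as $|E|,|F|<\infty$ (and otherwise one splits off the trivial case as above, or uses the localization $\|\nabla_{\mathcal{H}_{\alpha}} 1_A\|(U)$ on bounded $U\subset\Omega$ and passes to the limit via the monotone convergence characterization of the Radon measure extension established right after Lemma \ref{lem1.1}). No new estimates, mollifications, or structural results are required beyond what has already been proved.
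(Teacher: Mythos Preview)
Your proposal is correct and matches the paper's own argument exactly: the paper states that the lemma follows immediately from Theorem~\ref{theorem2} by choosing $u=1_E$ and $v=1_F$. Your additional remarks about handling the trivial infinite case and the $L^1$ hypothesis are reasonable hygiene but go slightly beyond what the paper bothers to say.
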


In what follows, we establish the coarea formula for $\alpha$-HBV
functions. Let $f: \Omega\rightarrow \mathbb{R}^d$ and $t\in
\mathbb{R}$. Denote by $E_t=\{x\in \Omega: f(x)>t\}.$ The structure
of the $\alpha$-Hermite divergence and \cite[Section 5.5, Lemma
1]{EG} imply the following lemma.
\begin{lemma}If $f\in \mathcal{BV}_{\mathcal{H}_{\alpha}}(\Omega)$, the mapping $t\mapsto P_{\mathcal{H}_{\alpha}}(E_t,\Omega)$ is Lebesgue measurable for  $ t\in \mathbb{R}$.
\end{lemma}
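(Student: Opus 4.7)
The plan is to mimic the classical reduction used for the Euclidean perimeter (cf.\ \cite[Section 5.5, Lemma 1]{EG}): we realize $P_{\mathcal{H}_{\alpha}}(E_t,\Omega)$ as the pointwise supremum of a countable family of functions in $t$, each of which we verify is Lebesgue measurable by Fubini. Once that is done, a supremum of countably many measurable functions is measurable, and the claim follows.

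First I would fix an arbitrary $\varphi\in\mathcal F(\Omega)$ and study
$$g_{\varphi}(t):=\int_{E_t}\mathrm{div}_{\mathcal{H}_{\alpha}}\varphi(x)\,dx=\int_{\Omega}1_{\{f(x)>t\}}\,\mathrm{div}_{\mathcal{H}_{\alpha}}\varphi(x)\,dx.$$
Since $\varphi\in C_c^1(\Omega;\mathbb R^{2d})$ is compactly supported, the structure
$$\mathrm{div}_{\mathcal{H}_{\alpha}}\varphi=\sum_{k=1}^{d}\!\Big(\partial_{x_k}\varphi_{d-k+1}+\partial_{x_k}\varphi_{d+1+k}\Big)+\sum_{k=1}^{d}\sqrt{\alpha-1}\,x_k|x|^{(\alpha-2)/2}\big(\varphi_{d-k+1}-\varphi_{d+1+k}\big)$$
is bounded with the same compact support, hence lies in $L^1(\Omega)$. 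The map $(x,t)\mapsto 1_{\{f(x)>t\}}\,\mathrm{div}_{\mathcal{H}_{\alpha}}\varphi(x)$ is jointly measurable on $\Omega\times\mathbb R$ (because $f$ is measurable, so $\{(x,t):f(x)>t\}$ is Borel), and it is integrable in $x$ uniformly in $t$. Fubini's theorem therefore yields measurability of $t\mapsto g_{\varphi}(t)$.

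Next I would perform the countable reduction. The open set $\Omega$ is $\sigma$-compact, so it admits an exhaustion by compact sets $K_m\subset\Omega$. For each $m$, the space $C^1(K_m;\mathbb R^{2d})$ equipped with the $C^1$-norm is separable, and hence so is the closed subset of elements of $\mathcal F(\Omega)$ supported in $K_m$. Taking the union over $m$ of countable dense subsets produces a countable family $\{\varphi_n\}_{n\in\mathbb N}\subset\mathcal F(\Omega)$ with the property: for every $\varphi\in\mathcal F(\Omega)$ there is a subsequence $\varphi_{n_k}$ with supports in a common $K_m$ such that $\varphi_{n_k}\to\varphi$ in the $C^1$-norm. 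Because the multiplier $\sqrt{\alpha-1}\,x_k|x|^{(\alpha-2)/2}$ is continuous, hence bounded on $K_m$, this convergence forces $\mathrm{div}_{\mathcal{H}_{\alpha}}\varphi_{n_k}\to\mathrm{div}_{\mathcal{H}_{\alpha}}\varphi$ uniformly on $K_m$ with supports in $K_m$. Dominated convergence then gives $g_{\varphi_{n_k}}(t)\to g_{\varphi}(t)$ for every $t\in\mathbb R$, so
$$P_{\mathcal{H}_{\alpha}}(E_t,\Omega)=\sup_{\varphi\in\mathcal F(\Omega)}g_{\varphi}(t)=\sup_{n\in\mathbb N}g_{\varphi_n}(t),$$
which is measurable in $t$ as a countable supremum of measurable functions.

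The only delicate point is the countable reduction: one must verify that a countable family $\{\varphi_n\}$ dense in the $C^1$-topology really does achieve the supremum defining $P_{\mathcal{H}_{\alpha}}(E_t,\Omega)$ for \emph{every} $t$ simultaneously, which is why $C^1$-density (rather than just $L^\infty$-density) is needed so that the divergence, including the Hermite zeroth-order terms $\sqrt{\alpha-1}\,x_k|x|^{(\alpha-2)/2}\varphi_j$, converges uniformly on a common compact support. Once this is in place, everything else is standard.
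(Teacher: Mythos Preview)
Your proposal is correct and follows exactly the route the paper intends: the paper's own ``proof'' simply states that the structure of the $\alpha$-Hermite divergence together with \cite[Section 5.5, Lemma~1]{EG} gives the result, and you have spelled out precisely those details---showing each $g_\varphi$ is measurable (via Fubini, using that $\mathrm{div}_{\mathcal{H}_{\alpha}}\varphi$ is bounded with compact support since the zeroth-order coefficients $x_k|x|^{(\alpha-2)/2}$ are continuous) and then reducing the supremum to a countable one by $C^1$-separability on compacta. One cosmetic remark: when selecting the countable dense family inside $\mathcal F(\Omega)$ you should note that the constraint $\|\varphi\|_\infty\le 1$ can be replaced by $\|\varphi\|_\infty<1$ without changing the supremum, so that nearby $C^1$-approximants automatically remain admissible; otherwise the argument is complete.
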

Below we prove a coarea formula for $\alpha$-HBV functions.
\begin{theorem}\label{thm2.6} If $f\in
\mathcal{BV}_{\mathcal{H}_{\alpha}}(\Omega)$, then
\begin{equation}\label{eq2.8}\parallel  \nabla_{\mathcal{H}_{\alpha}}  f\parallel(\Omega)
\approx\int^\infty_{-\infty} P_{\mathcal{H}_{\alpha}}(E_t,\Omega)dt.
\end{equation}
\end{theorem}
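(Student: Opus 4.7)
The strategy is to adapt the classical two-step proof of the coarea formula to the $\alpha$-Hermite setting, exploiting the fact that $\nabla_{\mathcal{H}_{\alpha}}$ differs from the Euclidean gradient only by a zeroth-order weight. The crucial algebraic identity, obtained by expanding the squares of $A^{\pm}_{i,\alpha}u=\partial_{x_{i}}u\pm\sqrt{\alpha-1}\,x_{i}|x|^{(\alpha-2)/2}u$, is
\[
|\nabla_{\mathcal{H}_{\alpha}}u|^{2}=\sum_{i=1}^{d}\bigl(|A^{-}_{i,\alpha}u|^{2}+|A^{+}_{i,\alpha}u|^{2}\bigr)=2|\nabla u|^{2}+2(\alpha-1)|x|^{\alpha}u^{2},
\]
so that $|\nabla_{\mathcal{H}_{\alpha}}u|\approx|\nabla u|+\sqrt{\alpha-1}\,|x|^{\alpha/2}|u|$ pointwise via $\sqrt{a^{2}+b^{2}}\approx a+b$. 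The same decomposition holds for characteristic functions of smooth level sets: for a.e.\ regular $t$, the $\mathbb{R}^{2d}$-valued measure $\nabla_{\mathcal{H}_{\alpha}}1_{E_{t}}$ splits into a singular part on $\partial E_{t}$ of magnitude $\sqrt{2}\,\mathcal{H}^{d-1}|_{\partial E_{t}}$ and an absolutely continuous piece $\sqrt{2(\alpha-1)}\,|x|^{\alpha/2}1_{E_{t}}\,dx$, giving $P_{\mathcal{H}_{\alpha}}(E_{t},\Omega)\approx\mathcal{H}^{d-1}(\partial E_{t}\cap\Omega)+\sqrt{\alpha-1}\int_{E_{t}}|x|^{\alpha/2}\,dx$.

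For the upper bound $\parallel\nabla_{\mathcal{H}_{\alpha}}f\parallel(\Omega)\lesssim\int_{-\infty}^{\infty}P_{\mathcal{H}_{\alpha}}(E_{t},\Omega)\,dt$, I would fix an arbitrary test field $\varphi\in\mathcal{F}(\Omega)$, insert the layer-cake representation $f=\int_{0}^{\infty}1_{E_{t}}\,dt-\int_{-\infty}^{0}1_{E_{t}^{c}}\,dt$, and apply Fubini to rewrite
\[
\int_{\Omega}f(x)\,\mathrm{div}_{\mathcal{H}_{\alpha}}\varphi(x)\,dx=\int_{-\infty}^{\infty}\left(\int_{E_{t}}\mathrm{div}_{\mathcal{H}_{\alpha}}\varphi(x)\,dx\right)dt
\]
after the requisite sign bookkeeping for the negative part of $f$. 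Each inner integral is bounded above by $P_{\mathcal{H}_{\alpha}}(E_{t},\Omega)$ directly from the definition of the perimeter as a supremum over $\mathcal{F}(\Omega)$, so taking the supremum over $\varphi$ on the left completes this half.

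For the reverse inequality, I would first establish the smooth version on $C^{\infty}_{c}(\Omega)$ by combining the pointwise comparison above with the classical Euclidean coarea formula $\int_{\Omega}|\nabla u|\,dx=\int_{-\infty}^{\infty}\mathcal{H}^{d-1}(\{u=t\}\cap\Omega)\,dt$ and a Fubini/layer-cake identity converting $\int|x|^{\alpha/2}|u|\,dx$ into an integral over $t$ of $\int_{E_{t}}|x|^{\alpha/2}\,dx$. To pass to $f\in\mathcal{BV}_{\mathcal{H}_{\alpha}}(\Omega)$, I would invoke Theorem \ref{propos4} to choose approximants $u_{h}\in C^{\infty}_{c}(\Omega)$ with $u_{h}\to f$ in $L^{1}(\Omega)$ and $\int|\nabla_{\mathcal{H}_{\alpha}}u_{h}|\,dx\to\parallel\nabla_{\mathcal{H}_{\alpha}}f\parallel(\Omega)$. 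Along a subsequence, $1_{E_{t}(u_{h})}\to 1_{E_{t}(f)}$ in $L^{1}_{\mathrm{loc}}$ for a.e.\ $t$, so Corollary \ref{lem1.1-1} gives $P_{\mathcal{H}_{\alpha}}(E_{t}(f),\Omega)\le\liminf_{h}P_{\mathcal{H}_{\alpha}}(E_{t}(u_{h}),\Omega)$, and Fatou's lemma then transfers the smooth inequality to $f$.

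The main obstacle is reconciling the two components of the $\alpha$-Hermite gradient with the corresponding two components of the perimeter throughout the layer-cake exchange. The Fubini step rewriting $\int|x|^{\alpha/2}|u|\,dx$ as an integral over levels requires careful attention to the sign of $u$ and to the behavior of $E_{t}$ for $t<\inf u$, where $E_{t}$ may equal $\Omega$; similarly the interaction between the surface term and the volume term in $P_{\mathcal{H}_{\alpha}}(E_{t},\Omega)$ must be handled so as not to introduce spurious divergences. Tracking the implicit constants in $\approx$ through each application of $\sqrt{a^{2}+b^{2}}\approx a+b$ and the classical coarea formula is likewise delicate, but unavoidable given that the theorem asserts only a two-sided comparison rather than an equality.
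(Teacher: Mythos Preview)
Your proposal is correct and follows the same overall architecture as the paper: the layer-cake/Fubini argument for the upper bound is identical, and the lower bound is obtained first for smooth functions via the pointwise comparison $|\nabla_{\mathcal{H}_{\alpha}}u|\approx|\nabla u|+\sqrt{\alpha-1}\,|x|^{\alpha/2}|u|$ (this is exactly the paper's inequality (\ref{eq3.11})), then extended to $\mathcal{BV}_{\mathcal{H}_{\alpha}}$ by Theorem~\ref{propos4} and lower semicontinuity, just as the paper does.

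The one tactical difference is in how the smooth case of the lower bound is executed. You compute $P_{\mathcal{H}_{\alpha}}(E_{t},\Omega)$ directly via the structure theorem, splitting it into the surface piece $\sqrt{2}\,\mathcal{H}^{d-1}(\partial E_{t}\cap\Omega)$ and the weighted volume piece $\sqrt{2(\alpha-1)}\int_{E_{t}}|x|^{\alpha/2}\,dx$, and then invoke the classical Euclidean coarea formula and a layer-cake identity to reassemble $\int|\nabla_{\mathcal{H}_{\alpha}}u|\,dx$. The paper instead follows the Miranda-style route: it constructs Lipschitz truncations $v_{h}=g_{h}(f)\to 1_{E_{t}}$, estimates $\int|\nabla_{\mathcal{H}_{\alpha}}v_{h}|\,dx$ via (\ref{eq3.11}), and passes to the limit to bound $P_{\mathcal{H}_{\alpha}}(E_{t},\Omega)$ by $\sqrt{2}\,m'(t)+\sqrt{2(\alpha-1)}\int_{E_{t}}|x|^{\alpha/2}\,dx$, where $m(t)=\int_{\{f\le t\}}|\nabla f|\,dx$. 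Your route is slightly more geometric and avoids the auxiliary sequence $v_{h}$; the paper's route avoids having to justify the exact decomposition of $P_{\mathcal{H}_{\alpha}}(E_{t},\Omega)$ and stays closer to the metric-BV literature. Both arrive at the same integrated inequality, and both face the same bookkeeping issue you flag concerning $E_{t}$ for $t<\inf f$ (the paper handles it implicitly by citing \cite{EG}).
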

\begin{proof}Let $\varphi\in C^1_c(\Omega, \mathbb{R}^{2d})$ and $\|\varphi\|_{L^\infty}\le
1.$ Firstly, we claim that $$\int_{\Omega}f
\mathrm{div}_{\mathcal{H}_{\alpha}}\varphi
dx=\int^\infty_{-\infty}\Big(\int_{E_t}\mathrm{div}_{\mathcal{H}_{\alpha}}\varphi
dx\Big)dt.$$  The above claim can be proved by the following facts:
for $i=1,2,\ldots,d$,
$$\int_{\Omega}f x_i|x|^{(\alpha-2)/2}\varphi
dx=\int^\infty_{-\infty}\Big(\int_{E_t} x_i|x|^{(\alpha-2)/2}\varphi
dx\Big)dt $$ and
$$\int_{\Omega}f \mathrm{div} \varphi
dx=\int^\infty_{-\infty}\Big(\int_{E_t}\mathrm{div} \varphi
dx\Big)dt,$$ where the latter can be seen in the proof of
\cite[Section 5.5, Theorem 1]{EG}. Therefore, we conclude that for all
$\varphi$ as above,
 $$\int_{\Omega}f \mathrm{div}_{\mathcal{H}_{\alpha}}\varphi
dx\le \int^\infty_{-\infty}P_{\mathcal{H}_{\alpha}}(E_t,\Omega)
dt.$$ Furthermore,
 \begin{equation*}
 \|\nabla_{\mathcal{H}_{\alpha}} f \|(\Omega)\le \int^\infty_{-\infty}P_{\mathcal{H}_{\alpha}}(E_t,\Omega) dt.
\end{equation*}
Secondly, we claim that (\ref{eq2.8}) holds true for all $f\in
\mathcal{BV}_{\mathcal{H}_{\alpha}}(\Omega)\bigcap
C^\infty(\Omega)$. Next we will prove the claim according to the
idea of \cite[Proposition 4.2]{Miranda1}. Let
\begin{equation*}
\label{eqq2.8}m(t)=\int_{\{x\in\Omega: f(x)\le
t\}}|\nabla  f|dx.
\end{equation*}
Then it is obvious that
\begin{equation*}\label{eqq2.9}\int^\infty_{-\infty}m'(t)dt\le\int_{\Omega}|\nabla f|dx.\end{equation*}
Define the following function $g_{h}$:
$$g_{h}(s)=\begin{cases} 0, \mathrm{if} \ s\le t;\\
 h(t-s)+1, \ \mathrm{if} \ t\le s\le t+1/h;\\
1, \ \mathrm{if} \   s\ge t+1/h,
\end{cases}$$
where $t\in \mathbb{R}$.  We define the sequence
$v_{h}(x):=g_{h}(f(x))$. At this time, $v_{h}\rightarrow 1_{E_t}$ in
$L^1(\Omega)$. In fact,
\begin{eqnarray*}
\int_{\Omega}|v_{h}(x)-1_{E_t}(x)|dx&=&\int_{\{x\in\Omega:
t<f(x)\le t+1/h\}}g_{h}(f(x))dx\\
&\le& \Big|\Big\{x\in\Omega:
t<f(x)\le t+1/h\Big\}\Big|\rightarrow 0,
\end{eqnarray*}
since $\{x\in\Omega: t<f(x)\le t+1/h\}\rightarrow \emptyset$ when
$h\rightarrow \infty.$ By a simple computation and [17, (2.19)],  we
obtain
\begin{equation}\label{eq3.11}|\nabla f(x)|\le
|\nabla_{\mathcal{H}_{\alpha}} f(x)|\le\sqrt{2} (|\nabla
f(x)|+\sqrt{\alpha-1}|x|^{\alpha/2}|f(x)|).\end{equation}
 Then
\begin{eqnarray*} \int_{\Omega}|\nabla_{\mathcal{H}_{\alpha}} v_{h}(x)|dx&\le&
\sqrt{2}h\int_{\{x\in\Omega: t<f(x)\le t+1/h\}} |\nabla
f(x)|dx+2\sqrt{2(\alpha-1)}\int_{\{x\in\Omega: t<f(x)\le t+1/h\}} |x
|^{\alpha/2}dx\\
&&+\sqrt{2(\alpha-1)}\int_{\{x\in\Omega:  f(x)\ge t+1/h\}} |x
|^{\alpha/2}dx\\
&=&
\sqrt{2}h\big(m(t+1/h)-m(t)\big)+2\sqrt{2(\alpha-1)}\int_{\{x\in\Omega:
t<f(x)\le
t+1/h\}} |x|^{\alpha/2}dx\\
&&+\sqrt{2(\alpha-1)}\int_{\{x\in\Omega:  f(x)\ge t+1/h\}} |x
|^{\alpha/2}dx.
\end{eqnarray*}
Taking the limit $h\rightarrow\infty$ and noting that Theorem
\ref{propos4}, we have
\begin{equation}\label{eq2.12}P_{\mathcal{H}_{\alpha}}(E_t,\Omega)\le
\lim\sup_{h\rightarrow\infty}\| \nabla_{\mathcal{H}_{\alpha}}
v_{h}\|(\Omega)\le
\sqrt{2}m'(t)+\sqrt{2(\alpha-1)}\int^\infty_{-\infty}\big(\int_{\{x\in\Omega:
f(x)\ge t\}} |x |^{\alpha/2}dx\big)dt.
\end{equation}
Integrating (\ref{eq2.12}) and using (\ref{eqq2.9}) we obtain
\begin{equation*}\label{eqq2.9}\int^\infty_{-\infty}P_{\mathcal{H}_{\alpha}}(E_t,\Omega)dt\le
 \sqrt{2}\int_{\Omega}|\nabla  f|dx+\sqrt{2(\alpha-1)}\int_{\Omega}|f(x)||x|^{\alpha/2}dx\le \sqrt{2}\int_{\Omega}|\nabla_{\mathcal{H}_{\alpha}}  f|dx.\end{equation*}
Finally, by approximation and using the lower semi-continuity of the
$\alpha$-Hermite perimeter, we conclude that  (\ref{eq2.8}) holds
true for all $f\in \mathcal{BV}_{\mathcal{H}_{\alpha}}(\Omega)$ (see
Evans and Gariepy \cite{EG} for details).

\end{proof}

Finally, we develop some inequalities for   $\alpha$-HBV functions
and   $\alpha$-Hermite perimeters.

\begin{theorem}\label{thm2.7}
\item{{\rm (i)}} (Sobolev's inequality) For all $f\in
\mathcal{BV}_{\mathcal{H}_{\alpha}}( \mathbb{R}^d)$,
\begin{equation*}\|f\|_{L^{{d}/{(d-1)}}}\lesssim \| \nabla_{\mathcal{H}_{\alpha}} f\|( \mathbb{R}^d).
\end{equation*}

\item{{\rm (ii)}} (Isoperimetric inequality) Let $E$ be a bounded set
of finite $\alpha$-Hermite perimeter in $ \mathbb{R}^d$. Then
\begin{equation*}|E|^{1-{1}/{d}}\lesssim P_{\mathcal{H}_{\alpha}}(E).
\end{equation*}

\item{{\rm (iii)}} The above two statements are equivalent.
\end{theorem}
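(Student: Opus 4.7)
The plan is to prove (i) directly, deduce (ii) immediately from (i), and then establish the equivalence (iii) by supplying a proof of the reverse implication (ii) $\Rightarrow$ (i) via the coarea formula. For (i), I would first note the pointwise identity
\[|\nabla_{\mathcal{H}_\alpha} f|^2 = 2|\nabla f|^2 + 2(\alpha-1)|x|^{\alpha}|f|^2,\]
which yields $|\nabla f|\leq|\nabla_{\mathcal{H}_\alpha} f|/\sqrt{2}$ whenever $f$ is smooth. To upgrade this to a variational bound at the BV level, I would use a test-function duality: given $\psi\in C^1_c(\mathbb{R}^d;\mathbb{R}^d)$ with $\|\psi\|_\infty\leq 1$, set $\varphi_i:=\psi_{d-i+1}/\sqrt{2}$ and $\varphi_{d+i}:=\psi_i/\sqrt{2}$ for $1\leq i\leq d$. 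A direct computation using $A^+_{i,\alpha}+A^-_{i,\alpha}=2\partial_{x_i}$ gives $\mathrm{div}_{\mathcal{H}_\alpha}\varphi=\sqrt{2}\,\mathrm{div}\,\psi$ together with $\|\varphi\|_\infty\leq 1$, whence $\sqrt{2}\parallel\nabla f\parallel(\mathbb{R}^d)\leq\parallel\nabla_{\mathcal{H}_\alpha} f\parallel(\mathbb{R}^d)$. Combined with the classical Sobolev inequality $\|f\|_{L^{d/(d-1)}}\lesssim \parallel\nabla f\parallel(\mathbb{R}^d)$ for BV functions, this proves (i). For (ii), apply (i) to $f=1_E$ and observe $\|1_E\|_{L^{d/(d-1)}}=|E|^{(d-1)/d}$ and $\parallel\nabla_{\mathcal{H}_\alpha} 1_E\parallel(\mathbb{R}^d)=P_{\mathcal{H}_\alpha}(E)$; this argument also supplies the (i) $\Rightarrow$ (ii) direction of (iii).

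For the converse (ii) $\Rightarrow$ (i), I would first assume $f\geq 0$ with compact support, so that each level set $E_t:=\{f>t\}$, $t>0$, is bounded. The layer-cake formula $f=\int_0^\infty 1_{E_t}\,dt$ together with Minkowski's integral inequality yields
\[\|f\|_{L^{d/(d-1)}}\leq\int_0^\infty\|1_{E_t}\|_{L^{d/(d-1)}}\,dt=\int_0^\infty|E_t|^{(d-1)/d}\,dt.\]
Applying (ii) termwise and then the coarea formula (Theorem \ref{thm2.6}) gives
\[\|f\|_{L^{d/(d-1)}}\lesssim\int_0^\infty P_{\mathcal{H}_\alpha}(E_t)\,dt\approx\parallel\nabla_{\mathcal{H}_\alpha} f\parallel(\mathbb{R}^d).\]
For a general $f\in\mathcal{BV}_{\mathcal{H}_\alpha}(\mathbb{R}^d)$, decompose $f=f_+-f_-$ and use Theorem \ref{propos4} to approximate each part by a sequence $\{f_h\}\subset C^\infty_c\cap\mathcal{BV}_{\mathcal{H}_\alpha}$ with $\int|\nabla_{\mathcal{H}_\alpha} f_h|\,dx\to\parallel\nabla_{\mathcal{H}_\alpha} f\parallel(\mathbb{R}^d)$; Fatou's lemma applied on the left then delivers the desired inequality in the limit.

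The main obstacle is the approximation step in the reverse direction: a generic $f\in\mathcal{BV}_{\mathcal{H}_\alpha}(\mathbb{R}^d)$ need not have bounded (or even finite-measure) level sets, so the hypothesized (ii) cannot be applied directly to the $E_t$ of $f$ itself. Theorem \ref{propos4} circumvents this by providing compactly supported smooth approximants whose Hermite variations converge, and the compact-support property is inherited by each of their level sets. A minor secondary issue is that Minkowski's integral inequality requires $d/(d-1)\geq 1$, which holds for every $d\geq 2$ assumed in the paper, so the threshold condition is compatible with the argument.
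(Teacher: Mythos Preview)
Your proposal is correct and follows the same overall strategy as the paper: deduce (i) from the classical Sobolev inequality via a comparison $\parallel\nabla f\parallel\lesssim\parallel\nabla_{\mathcal{H}_\alpha}f\parallel$, specialize to $f=1_E$ for (ii), and run (ii)$\Rightarrow$(i) through the coarea formula. The technical execution differs in two places. First, for (i) the paper approximates $f$ by $f_k\in C^\infty_c$ (Theorem~\ref{propos4}), applies the pointwise bound $|\nabla f_k|\le|\nabla_{\mathcal{H}_\alpha}f_k|$ from (\ref{eq3.11}), and then the smooth Gagliardo--Nirenberg--Sobolev inequality; your test-function construction (pairing $\psi$ with its reflection to cancel the potential terms) gives the comparison directly at the variational level and avoids the approximation step. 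Second, for (ii)$\Rightarrow$(i) the paper sets $\chi(t)=\|\min\{t,f\}\|_{d/(d-1)}$, shows $\chi$ is locally Lipschitz with $\chi'(t)\le|E_t|^{(d-1)/d}$, and integrates; your use of Minkowski's integral inequality reaches the same bound $\|f\|_{d/(d-1)}\le\int_0^\infty|E_t|^{(d-1)/d}\,dt$ in one line. Both variants are legitimate and yours is somewhat more streamlined; the paper's route has the minor advantage of being self-contained (it does not invoke the BV Sobolev inequality or Minkowski as black boxes).
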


\begin{proof}(i) Choose
$$
f_k\in C^\infty_c(\mathbb{R}^d)\cap
\mathcal{BV}_{\mathcal{H}_{\alpha}}(\mathbb{R}^d), k=1,2,\ldots,
$$ such that
$$
\begin{cases}
f_k\rightarrow
f \quad\hbox{in}\quad L^1(\mathbb{R}^d);\\
\int_{\mathbb{R}^d} |\nabla_{\mathcal{H}_{\alpha}}
f_k(x)|dx\rightarrow
\parallel  \nabla_{\mathcal{H}_{\alpha}}  f\parallel(\mathbb{R}^d).
\end{cases}
$$

Then by Fatou's lemma  and the classical Gagliardo-Nirenberg-Sobolev
inequality (see \cite{EG}), we have
\begin{eqnarray*}\|f\|_{L^{{d}/{(d-1)}}}\le\liminf_{k\rightarrow\infty}\|f_k\|_{L^{{d}/{(d-1)}}}\lesssim
\lim_{k\rightarrow\infty}  \|\nabla f\|_{L^1}\lesssim
\lim_{k\rightarrow\infty}  \|\nabla_{\mathcal{H}_{\alpha}}
f\|_{L^1}=  \| \nabla_{\mathcal{H}_{\alpha}} f\|( \mathbb{R}^d),
\end{eqnarray*}where we have used the relation between the gradient
$\nabla$ and the $\alpha$-Hermite gradient $\nabla_{\mathcal{H}_{\alpha}}$ in
(\ref{eq3.11}).

(ii) We can show that (ii) is valid via letting $f=1_E$ in (i).

(iii) Obviously,  (i)$\Rightarrow$(ii) has been proved. In what
follows, we prove (ii)$\Rightarrow$(i). Assume that $0\le f\in
C^\infty_c(\mathbb{R}^d)$. By the coarea formula  in  Theorem
\ref{thm2.6} and (ii), we have
\begin{equation*}\label{equa2}
\int_{\mathbb{R}^d} |\nabla_{\mathcal{H}_{\alpha}}
f(x)|dx\approx\int_{0}^\infty
P_{\mathcal{H}_{\alpha}}(E_t)\,dt\gtrsim\int_{0}^\infty
|E_t|^{1-{1}/{d}}dt,
\end{equation*}
where $E_t=\big(\{x\in \mathbb{R}^d:\ f(x)>t\}\big)$. Let
$$f_t=\min\{t,f\}\ \ \&\ \ \chi(t)=\Big(\int_{\mathbb{R}^d}
(f_t(x))^{{d}/{(d-1)}}dx\Big)^{1-1/d}\ \ \forall\ t\in \mathbb{R}.
$$
It is easy to see that
$$\lim_{t\rightarrow \infty}\chi(t)=\Big(\int_{\mathbb{R}^d}
|f(x)|^{{d}/{(d-1)}}dx\Big)^{1-1/d}.$$ We can check that
$\chi(\cdot)$ is nondecreasing on $(0,\infty)$ and for $h>0$,
$$0\le \chi(t+h)-\chi(t)\le \Big(\int_{\mathbb{R}^d}
|f_{t+h}(x)-f_t(x)|^{{d}/{(d-1)}}dx\Big)^{1-1/d}\le
h|E_t|^{1-1/d}.$$ Then $\chi(\cdot)$
 is locally Lipschitz
and $\chi'(t)\le |E_t|^{1-1/d}$,  for a.e. $t\in (0,\infty)$.
Hence,
\begin{eqnarray*}
\Big(\int_{\mathbb{R}^d}
|f(x)|^{{d}/{(d-1)}}dx\Big)^{1-1/d}=\int^\infty_0 \chi'(t)dt\le
\int^\infty_0  |E_t|^{1-1/d}dt\lesssim\int_{\mathbb{R}^d}
|\nabla_{\mathcal{H}_{\alpha}} f(x)|dx.\end{eqnarray*}

\end{proof}

The following lemma gives some estimates for the  $\alpha$-Hermite
perimeter, which are different from the cases of the classical
perimeter.

\begin{lemma}\label{coro2.5} For any set $E$ in $\mathbb{R}^d$, denote by $sE$ the set $\{sx: x\in E\}$. The following statements
are valid:

\item{\rm (i)} If\, $0<s\le 1$, then
\begin{equation}\label{equa3.8}
s^{d+\alpha/2}P_{\mathcal{H}_{\alpha}}(E)\lesssim
P_{\mathcal{H}_{\alpha}}(sE)\lesssim
s^{d-1}P_{\mathcal{H}_{\alpha}}(E).\end{equation}

\item{\rm (ii)} If\, $s> 1$, then
\begin{equation}\label{equaa3.9}
s^{d-1}P_{\mathcal{H}_{\alpha}}(E)\lesssim
P_{\mathcal{H}_{\alpha}}(sE)\lesssim
s^{d+\alpha/2}P_{\mathcal{H}_{\alpha}}(E).
\end{equation}

\end{lemma}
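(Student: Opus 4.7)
The plan is to change variables in the defining supremum and to track separately how the two qualitatively different parts of $\mathrm{div}_{\mathcal{H}_{\alpha}}$ rescale under $x=sy$. For $\varphi\in\mathcal{F}(\mathbb{R}^d)$ and $\psi(y):=\varphi(sy)$, the chain rule together with the $(\alpha/2)$-homogeneity of the potential $x_i|x|^{(\alpha-2)/2}$ will give
\[
(\mathrm{div}_{\mathcal{H}_{\alpha}}\varphi)(sy)=\tfrac{1}{s}\,\mathrm{div}_{0}\psi(y)+s^{\alpha/2}R_{\alpha}\psi(y),
\]
where $\mathrm{div}_{0}\psi:=\sum_{i=1}^{d}(\partial_{y_{i}}\psi_{d-i+1}+\partial_{y_{i}}\psi_{d+i})$ is the first-order part of $\mathrm{div}_{\mathcal{H}_{\alpha}}$ and $R_{\alpha}\psi:=\sqrt{\alpha-1}\sum_{i=1}^{d}y_{i}|y|^{(\alpha-2)/2}(\psi_{d-i+1}-\psi_{d+i})$ is the zero-order multiplicative part. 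A Jacobian factor $s^{d}$ then yields the clean identity
\[
\int_{sE}\mathrm{div}_{\mathcal{H}_{\alpha}}\varphi(x)\,dx=s^{d-1}\int_{E}\mathrm{div}_{0}\psi\,dy+s^{d+\alpha/2}\int_{E}R_{\alpha}\psi\,dy,
\]
in which the two exponents $d-1$ and $d+\alpha/2$ appearing in (\ref{equa3.8})--(\ref{equaa3.9}) are already visible.

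The key device will be a pointwise symmetric/antisymmetric parametrization of the test field: write $\psi_{d-i+1}=u_{i}+v_{i}$ and $\psi_{d+i}=u_{i}-v_{i}$ for $i=1,\ldots,d$. This converts the constraint $\|\psi\|_{\infty}\leq 1$ to $\sum_{i=1}^{d}(u_{i}(y)^{2}+v_{i}(y)^{2})\leq 1/2$ pointwise, and it decouples the two pieces as $\mathrm{div}_{0}\psi=2\sum_{i}\partial_{y_{i}}u_{i}$ and $R_{\alpha}\psi=2\sqrt{\alpha-1}\sum_{i}y_{i}|y|^{(\alpha-2)/2}v_{i}$, the crucial point being that $u$ only affects the derivative part and $v$ only the multiplicative part. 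For the upper bound in (i), given any $\varphi\in\mathcal{F}(\mathbb{R}^{d})$ I shall attach to it the new test field $\widetilde{\psi}$ whose parameters are $(u,s^{1+\alpha/2}v)$; since $s\leq 1$ and $\alpha\geq 1$ force $s^{2+\alpha}\leq 1$, admissibility $\|\widetilde{\psi}\|_{\infty}\leq\|\psi\|_{\infty}\leq 1$ is preserved, and the displayed identity rearranges to $\int_{sE}\mathrm{div}_{\mathcal{H}_{\alpha}}\varphi\,dx=s^{d-1}\int_{E}\mathrm{div}_{\mathcal{H}_{\alpha}}\widetilde{\psi}\,dy\leq s^{d-1}P_{\mathcal{H}_{\alpha}}(E)$. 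For the lower bound in (i) I shall reverse the trick: starting from any admissible $\widetilde{\psi}$ on $E$ with parameters $(u,v)$, I construct $\psi$ with parameters $(s^{1+\alpha/2}u,v)$, still admissible by the same inequality $s^{2+\alpha}\leq 1$, and set $\varphi(x):=\psi(x/s)$; the same identity delivers $\int_{sE}\mathrm{div}_{\mathcal{H}_{\alpha}}\varphi\,dx=s^{d+\alpha/2}\int_{E}\mathrm{div}_{\mathcal{H}_{\alpha}}\widetilde{\psi}\,dy$, and taking the supremum concludes (i).

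Part (ii) will follow from part (i) via the substitution $t:=1/s\in(0,1]$ together with $E':=sE$, for which $tE'=E$; inserting these into (\ref{equa3.8}) and rearranging in $s$ yields the two inequalities of (\ref{equaa3.9}) without further work.

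The main obstacle I anticipate is the first one: isolating the symmetric/antisymmetric parametrization of $\psi$ that simultaneously diagonalises the $\ell^{2}$-constraint $\sum_{j=1}^{2d}\psi_{j}^{2}\leq 1$ into an additive condition on $u$ and $v$, and routes the gradient-type term entirely through $u$ and the potential-type term entirely through $v$. Once this decoupling is in hand, compressing either $u$ or $v$ by the factor $s^{1+\alpha/2}\leq 1$ preserves admissibility and absorbs the mismatched scaling between $d-1$ and $d+\alpha/2$, after which the rest of the argument is elementary bookkeeping.
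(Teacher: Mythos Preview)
Your argument is correct and follows the same change-of-variables route as the paper: both proofs rewrite $\int_{sE}\mathrm{div}_{\mathcal{H}_{\alpha}}\varphi\,dx$ over $E$ via $x=sy$, isolate the first-order piece scaling as $s^{d-1}$ and the potential piece scaling as $s^{d+\alpha/2}$, and then compare the resulting supremum with $P_{\mathcal{H}_{\alpha}}(E)$. The paper simply asserts the ensuing supremum inequalities (with $\lesssim$), whereas you supply a mechanism for them: the symmetric/antisymmetric parametrisation $\psi_{d-i+1}=u_{i}+v_{i}$, $\psi_{d+i}=u_{i}-v_{i}$ turns the $\ell^{2}$-constraint into an additive one and routes $\mathrm{div}_{0}$ entirely through $u$ and $R_{\alpha}$ entirely through $v$, so that compressing one of $u,v$ by $s^{1+\alpha/2}\le 1$ preserves admissibility and rewrites the scaled integral exactly as $s^{d-1}$ (respectively $s^{d+\alpha/2}$) times a legitimate competitor for $P_{\mathcal{H}_{\alpha}}(E)$. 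This is a genuine improvement in rigor over the paper's treatment: the naive step ``$\sup_{\psi}\{s^{d-1}A(\psi)+s^{d+\alpha/2}B(\psi)\}\le s^{d-1}\sup_{\psi}\{A(\psi)+B(\psi)\}$'' is not immediate, since $B(\psi)$ can be negative, and your decoupling is precisely what closes that gap (and in fact yields implicit constant $1$). The reduction of (ii) to (i) by $t=1/s$ is also cleaner than the paper's parallel re-derivation.
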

\begin{proof}
By the definition of the $\alpha$-Hermite perimeter, we have
\begin{eqnarray*}
P_{\mathcal{H}_{\alpha}}(sE)&=& \sup_{\varphi\in \mathcal
{F}(\Omega)}\Big\{\int_{sE}
\mathrm{div}_{\mathcal{H}_{\alpha}}\varphi(x)dx\Big\}\\
&=&\sup_{\varphi\in \mathcal
{F}(\Omega)}\Big\{\int_{E}s^{d-1}\Big[\sum^d_{k=1}\frac{\partial}{\partial
x_k}\varphi_{d-k+1}(sx)+\sum^d_{k=1} \frac{\partial}{\partial
x_k}\varphi_{d+1+k}(sx)\Big]\\&&+s^{d+\alpha/2}\Big[\sum^d_{k=1}
\sqrt{\alpha-1}x_k|x|^{\frac{\alpha-2}{2}}\varphi_{d-k+1}(sx)-\sum^d_{k=1}
 \sqrt{\alpha-1}x_k|x|^{\frac{\alpha-2}{2}}\varphi_{d+1+k}(sx)\Big]dx\Big\}.\end{eqnarray*}

If  $0<s\le 1$, since \begin{eqnarray*} &&\sup_{\varphi\in \mathcal
{F}(\Omega)}\Big\{\int_{E}s^{d-1}\Big[\sum^d_{k=1}\frac{\partial}{\partial
x_k}\varphi_{d-k+1}(sx)+\sum^d_{k=1} \frac{\partial}{\partial
x_k}\varphi_{d+1+k}(sx)\Big]\\
&&\quad+s^{d+\alpha/2}\Big[\sum^d_{k=1}
\sqrt{\alpha-1}x_k|x|^{(\alpha-2)/2}\varphi_{d-k+1}(sx)-\sum^d_{k=1}
 \sqrt{\alpha-1}x_k|x|^{(\alpha-2)/2}\varphi_{d+1+k}(sx)\Big]dx\Big\}\\
&&\lesssim s^{d-1}\sup_{\varphi\in \mathcal
{F}(\Omega)}\Big\{\int_{E}\Big[\sum^d_{k=1}\frac{\partial}{\partial
x_k}\varphi_{d-k+1}(x)+\sum^d_{k=1} \frac{\partial}{\partial
x_k}\varphi_{d+1+k}(x)\Big]\\
&&\quad+ \Big[\sum^d_{k=1}
\sqrt{\alpha-1}x_k|x|^{(\alpha-2)/2}\varphi_{d-k+1}(x)-\sum^d_{k=1}
 \sqrt{\alpha-1}x_k|x|^{(\alpha-2)/2}\varphi_{d+1+k}(x)\Big]dx\Big\},
 \end{eqnarray*} then
$$P_{\mathcal{H}_{\alpha}}(sE)
 \lesssim  s^{d-1}P_{\mathcal{H}_{\alpha}}(E).$$
Moreover, since
\begin{eqnarray*}
&&\sup_{\varphi\in \mathcal
{F}(\Omega)}\Big\{\int_{E}s^{d-1}\Big[\sum^d_{k=1}\frac{\partial}{\partial
x_k}\varphi_{d-k+1}(sx)+\sum^d_{k=1} \frac{\partial}{\partial
x_k}\varphi_{d+1+k}(sx)\Big]\\
&&\quad+s^{d+\alpha/2}\Big[\sum^d_{k=1}
\sqrt{\alpha-1}x_k|x|^{(\alpha-2)/2}\varphi_{d-k+1}(sx)-\sum^d_{k=1}
 \sqrt{\alpha-1}x_k|x|^{(\alpha-2)/2}\varphi_{d+1+k}(sx)\Big]dx\Big\}\\
&&\gtrsim s^{d+\alpha/2}\sup_{\varphi\in \mathcal
{F}(\Omega)}\Big\{\int_{E}\Big[\sum^d_{k=1}\frac{\partial}{\partial
x_k}\varphi_{d-k+1}(x)+\sum^d_{k=1} \frac{\partial}{\partial
x_k}\varphi_{d+1+k}(x)\Big]\\
&&\quad+ \Big[\sum^d_{k=1}
\sqrt{\alpha-1}x_k|x|^{(\alpha-2)/2}\varphi_{d-k+1}(x)-\sum^d_{k=1}
 \sqrt{\alpha-1}x_k|x|^{(\alpha-2)/2}\varphi_{d+1+k}(x)\Big]dx\Big\},
 \end{eqnarray*} then
$$P_{\mathcal{H}_{\alpha}}(sE)
 \gtrsim  s^{d+\alpha/2}P_{\mathcal{H}_{\alpha}}(E).$$
Therefore, (\ref{equa3.8}) is proved. The inequalities in (\ref{equaa3.9})
can be  proved in a similar way.

\end{proof}

An immediate corollary of the above lemma is as follows.

\begin{corollary}\label{coro2.5}
Let $B(0,s)$ be the open ball centered at $0$ with radius $s$, where
$0$ is the origin of\, $\mathbb{R}^d$.

$\mathrm{(i)}$ If\, $0<s\le 1$,
\begin{equation*}\label{equa3.8}s^{d+\alpha/2}\lesssim
P_{\mathcal{H}_{\alpha}}(B(0,s))\lesssim s^{d-1}.\end{equation*}

$\mathrm{(ii)}$ If\, $s> 1$,
\begin{equation*}\label{equa3.9}s^{d-1} \lesssim P_{\mathcal{H}_{\alpha}}(B(0,s))\lesssim s^{d+\alpha/2}.\end{equation*}

\end{corollary}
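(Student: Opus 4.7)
The plan is to apply the preceding Lemma directly to the model set $E=B(0,1)$ and exploit the dilation identity $sB(0,1)=B(0,s)$. The only work remaining is then to check that
\[
0<P_{\mathcal{H}_{\alpha}}\bigl(B(0,1)\bigr)<\infty,
\]
so that this value may be absorbed into the implicit constants of $\lesssim$ and $\gtrsim$.

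For the upper bound on $P_{\mathcal{H}_{\alpha}}(B(0,1))$, I would approximate $1_{B(0,1)}$ in $L^{1}$ by a sequence of compactly supported smooth cut-offs $\{u_{h}\}$ supported in $B(0,2)$ and use the pointwise inequality (\ref{eq3.11}), namely
\[
\lvert\nabla_{\mathcal{H}_{\alpha}}u_{h}(x)\rvert\le\sqrt{2}\bigl(\lvert\nabla u_{h}(x)\rvert+\sqrt{\alpha-1}\,\lvert x\rvert^{\alpha/2}\lvert u_{h}(x)\rvert\bigr).
\]
Since $u_{h}$ can be chosen uniformly bounded with $\int\lvert\nabla u_{h}\rvert$ controlled by the classical perimeter of $B(0,1)$, and since $\lvert x\rvert^{\alpha/2}$ is bounded on $\mathrm{supp}(u_{h})\subset B(0,2)$, Lemma~\ref{lem1.1} and the lower semicontinuity of $\|\nabla_{\mathcal{H}_{\alpha}}\cdot\|$ give a finite upper bound. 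For the lower bound I would again invoke (\ref{eq3.11}), which yields $\lvert\nabla f\rvert\le\lvert\nabla_{\mathcal{H}_{\alpha}}f\rvert$ pointwise, hence
\[
P\bigl(B(0,1)\bigr)\;\le\;P_{\mathcal{H}_{\alpha}}\bigl(B(0,1)\bigr),
\]
and the right-hand side is bounded below by the classical surface area $\omega_{d-1}>0$ (equivalently, by the classical perimeter $2P(B(0,1))$ modulo the factor noted in Section~\ref{sec-2.2}).

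With $P_{\mathcal{H}_{\alpha}}(B(0,1))\approx 1$ established, I would then substitute $E=B(0,1)$ into the scaling bounds of the previous lemma. For $0<s\le1$,
\[
s^{d+\alpha/2}\;\lesssim\;s^{d+\alpha/2}P_{\mathcal{H}_{\alpha}}\bigl(B(0,1)\bigr)\;\lesssim\;P_{\mathcal{H}_{\alpha}}\bigl(B(0,s)\bigr)\;\lesssim\;s^{d-1}P_{\mathcal{H}_{\alpha}}\bigl(B(0,1)\bigr)\;\lesssim\;s^{d-1},
\]
giving (i); for $s>1$ the analogous string of inequalities coming from the second half of the lemma produces (ii).

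The only non-trivial step is verifying the two-sided bound $P_{\mathcal{H}_{\alpha}}(B(0,1))\approx 1$, and this is standard given the approximation theorem (Theorem~\ref{propos4}) and the pointwise comparison (\ref{eq3.11}); no further obstacle is expected since $B(0,1)$ is smooth, bounded, and stays away from the singularity issues of the weight $\lvert x\rvert^{\alpha/2}$ that might arise on unbounded sets.
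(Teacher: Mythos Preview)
Your proposal is correct and matches the paper's intent: the paper states this result as ``an immediate corollary of the above lemma'' without proof, and your argument---apply the scaling lemma with $E=B(0,1)$ and absorb the finite positive constant $P_{\mathcal{H}_{\alpha}}(B(0,1))$---is exactly the intended reading. Your verification that $0<P_{\mathcal{H}_{\alpha}}(B(0,1))<\infty$ via (\ref{eq3.11}) and approximation is sound; you could alternatively get the lower bound in one line from the isoperimetric inequality of Theorem~\ref{thm2.7}(ii), but either route works.
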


\begin{remark}\label{rem2.13} It should be noted that    the set $E$ and its
complementary set have the same perimeter, while this fact plays an
important role during the proof of the main theorem in
\cite{Barozzi}. But unfortunately, for the case of the
$\alpha$-Hermite perimeter, the above fact doesn't hold. For
example, let $E=B(0,r)$ with $r>0$. By the definition of the
$\alpha$-Hermite perimeter, Corollary \ref{coro2.5} indicates that
\begin{equation}\label{eq-converse-example}
P_{\mathcal{H}_{\alpha}}(B(0,r)^c)\gtrsim\int_{B(0,r)^c}|x|^{\alpha/2}dx=\infty>P_{\mathcal{H}_{\alpha}}(B(0,r)).
\end{equation}
\end{remark}
Next we introduce the    so called  restricted $\alpha$-Hermite
perimeter as follows.

\begin{definition}\label{def-5.1}
The restricted $\alpha$-Hermite perimeter of  $E\subseteq \mathbb
R^{d}$ can be defined as follows:
$$\widetilde{P}_{\mathcal{H}_{\alpha}}(E)=
\sup_{\varphi\in \mathcal {F}_R(\mathbb R^{d})}\Big\{\int_E
\mathrm{div}_{\mathcal{H}_{\alpha}}\varphi(x)dx\Big\},$$
   where
$\mathcal{F}_R(\mathbb R^{d})$ denotes the class of all functions
$\varphi=(\varphi_1,\varphi_2,\ldots,\varphi_{2d})\in C^1_c(\mathbb
R^{d};\mathbb R^{2d})$  such that $$\parallel
\varphi\parallel_{\infty}=\sup_{x\in \mathbb R^{d}}(\mid
\varphi_1(x)\mid^2+\cdots+\mid \varphi_{2d}(x)\mid^2)^{1/2}\le 1 $$
and \begin{equation}\label{equa5.1} \int_{\mathbb
R^{d}}\Big[\sum^d_{k=1} \sqrt{\alpha-1}x_k|x|^{
{(\alpha-2)}/{2}}\varphi_{d-k+1}(x)-\sum^d_{k=1}
 \sqrt{\alpha-1}x_k|x|^{{(\alpha-2)}/{2}}\varphi_{d+1+k}(x)\Big]dx=0.
 \end{equation}
 \end{definition}
It is obvious that for any set $E$ in $\mathbb{R}^d$,
$$\widetilde{P}_{\mathcal{H}_{\alpha}}(
E)\le  {P}_{\mathcal{H}_{\alpha}}( E). $$

\begin{lemma}\label{lem-5.1}
 For any set $E$ in $\mathbb{R}^d$ with finite restricted $\alpha$-Hermite perimeter,
 $$\widetilde{P}_{\mathcal{H}_{\alpha}}( E)=\widetilde{P}_{\mathcal{H}_{\alpha}}( E^c).$$
\end{lemma}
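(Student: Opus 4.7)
The plan is to exploit the fact that the restriction imposed by condition~(\ref{equa5.1}) in Definition~\ref{def-5.1} was specifically engineered so that the full divergence integrates to zero over $\mathbb{R}^d$, which instantly yields the symmetry between $E$ and $E^c$. First, I would show that for every $\varphi\in\mathcal{F}_R(\mathbb{R}^d)$,
\begin{equation*}
\int_{\mathbb{R}^d}\mathrm{div}_{\mathcal{H}_{\alpha}}\varphi(x)\,dx=0.
\end{equation*}
To do this, split $\mathrm{div}_{\mathcal{H}_{\alpha}}\varphi$ into its pure-derivative part
$\sum_{k=1}^d\partial_{x_k}\varphi_{d-k+1}+\sum_{k=1}^d\partial_{x_k}\varphi_{d+1+k}$
and its multiplicative part
$\sum_{k=1}^d\sqrt{\alpha-1}\,x_k|x|^{(\alpha-2)/2}(\varphi_{d-k+1}-\varphi_{d+1+k})$.
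Since $\varphi\in C^1_c(\mathbb{R}^d;\mathbb{R}^{2d})$, the classical divergence theorem applied on a large ball containing $\mathrm{supp}(\varphi)$ annihilates the pure-derivative part, while the multiplicative part is killed precisely by the hypothesis~(\ref{equa5.1}).

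Next, for any (measurable) set $E\subseteq\mathbb{R}^d$ and any $\varphi\in\mathcal{F}_R(\mathbb{R}^d)$, I would split the zero integral as
\begin{equation*}
\int_E\mathrm{div}_{\mathcal{H}_{\alpha}}\varphi(x)\,dx+\int_{E^c}\mathrm{div}_{\mathcal{H}_{\alpha}}\varphi(x)\,dx=0,
\end{equation*}
which is legitimate because $\varphi$ has compact support, so both integrands are integrable on the bounded sets $E\cap\mathrm{supp}(\varphi)$ and $E^c\cap\mathrm{supp}(\varphi)$. Rearranging yields
\begin{equation*}
\int_{E^c}\mathrm{div}_{\mathcal{H}_{\alpha}}\varphi(x)\,dx=-\int_E\mathrm{div}_{\mathcal{H}_{\alpha}}\varphi(x)\,dx=\int_E\mathrm{div}_{\mathcal{H}_{\alpha}}(-\varphi)(x)\,dx.
\end{equation*}

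Finally, I would observe that the class $\mathcal{F}_R(\mathbb{R}^d)$ is invariant under $\varphi\mapsto-\varphi$: the sup-norm bound $\|-\varphi\|_\infty=\|\varphi\|_\infty\leq 1$ is preserved, and the linear condition~(\ref{equa5.1}) is preserved because zero is preserved under negation. Consequently, taking the supremum over $\varphi\in\mathcal{F}_R(\mathbb{R}^d)$ on both sides of the last display gives
\begin{equation*}
\widetilde{P}_{\mathcal{H}_{\alpha}}(E^c)=\sup_{\varphi\in\mathcal{F}_R(\mathbb{R}^d)}\int_{E^c}\mathrm{div}_{\mathcal{H}_{\alpha}}\varphi\,dx=\sup_{\psi\in\mathcal{F}_R(\mathbb{R}^d)}\int_E\mathrm{div}_{\mathcal{H}_{\alpha}}\psi\,dx=\widetilde{P}_{\mathcal{H}_{\alpha}}(E),
\end{equation*}
where $\psi=-\varphi$ traces out all of $\mathcal{F}_R(\mathbb{R}^d)$.

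I do not anticipate any real obstacle: the hypothesis of finite $\widetilde{P}_{\mathcal{H}_{\alpha}}(E)$ plays no role beyond guaranteeing that the common value is finite, and the proof is essentially a one-line consequence of the carefully chosen restriction~(\ref{equa5.1}). The only subtlety worth flagging is the justification that the pure-derivative part of $\int_{\mathbb{R}^d}\mathrm{div}_{\mathcal{H}_{\alpha}}\varphi\,dx$ vanishes, which must appeal explicitly to the compact support of $\varphi$ rather than to any decay of $u=1_E$; this is why the identity is restricted to $\Omega=\mathbb{R}^d$ and the class $\mathcal{F}_R(\mathbb{R}^d)$ rather than some localized version.
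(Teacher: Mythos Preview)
Your proof is correct and follows essentially the same strategy as the paper: show that $\int_E\mathrm{div}_{\mathcal{H}_\alpha}\varphi=-\int_{E^c}\mathrm{div}_{\mathcal{H}_\alpha}\varphi$ for every $\varphi\in\mathcal{F}_R(\mathbb{R}^d)$, then take the supremum using the symmetry $\varphi\mapsto-\varphi$ of $\mathcal{F}_R$. The only difference is in execution: the paper applies the classical divergence theorem on $E$ itself, passing through boundary integrals $\int_{\partial E}(\varphi_1,\ldots,\varphi_d)\cdot\vec{n}\,ds$ and using $\partial E=\partial E^c$ with reversed normal, whereas you apply the divergence theorem on a large ball containing $\mathrm{supp}(\varphi)$ to obtain $\int_{\mathbb{R}^d}\mathrm{div}_{\mathcal{H}_\alpha}\varphi=0$ directly and then split. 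Your route is slightly cleaner, since the paper's version tacitly assumes enough regularity of $\partial E$ to invoke the divergence theorem on $E$, while your argument needs only the compact support of $\varphi$ and therefore works for an arbitrary measurable $E$ without further justification.
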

\begin{proof} For any $\varphi\in \mathcal {F}_R(\mathbb
R^{d})$, via the classical divergence theorem and noting the compact
support of $\varphi$,  we have
\begin{eqnarray*}\int_E
\mathrm{div}_{\mathcal{H}_{\alpha}}\varphi(x)dx&=&\int_E
\mathrm{div} (\varphi_1(x),\ldots,\varphi_d(x))dx+\int_E
\mathrm{div}
(\varphi_{d+1}(x),\ldots,\varphi_{2d}(x))dx\\
&&+\int_{E} \Big[\sum^d_{k=1}
\sqrt{\alpha-1}x_k|x|^{{(\alpha-2)}/{2}}\varphi_{d-k+1}(x)-\sum^d_{k=1}
 \sqrt{\alpha-1}x_k|x|^{{(\alpha-2)}/{2}}\varphi_{d+1+k}(x)\Big]dx\\
 &=&-\int_{\partial E^c}(\varphi_1(x),\ldots,\varphi_d(x))\cdot \vec{n} ds-\int_{\partial E^c}(\varphi_{d+1}(x),\ldots,\varphi_{2d}(x))\cdot \vec{n} ds\\
 &&-\int_{E^c} \Big[\sum^d_{k=1}
\sqrt{\alpha-1}x_k|x|^{{(\alpha-2)}/{2}}\varphi_{d-k+1}(x)-\sum^d_{k=1}
 \sqrt{\alpha-1}x_k|x|^{{(\alpha-2)}/{2}}\varphi_{d+1+k}(x)\Big]dx\\
 &&+\int_{\mathbb R^{d}} \Big[\sum^d_{k=1}
\sqrt{\alpha-1}x_k|x|^{{(\alpha-2)}/{2}}\varphi_{d-k+1}(x)-\sum^d_{k=1}
 \sqrt{\alpha-1}x_k|x|^{{(\alpha-2)}/{2}}\varphi_{d+1+k}(x)\Big]dx\\
 &=& -\int_{E^c}
\mathrm{div}_{\mathcal{H}_{\alpha}}\varphi(x)dx,\end{eqnarray*}
where we have used (\ref{equa5.1}) in the last step.

Due to the arbitrariness of $\varphi$, taking the supremum implies

 $$\widetilde{P}_{\mathcal{H}_{\alpha}}( E)=\widetilde{P}_{\mathcal{H}_{\alpha}}(
 E^c).
 $$
\end{proof}

Using similar methods, we conclude that
$\widetilde{P}_{\mathcal{H}_{\alpha}}( \cdot)$ enjoys several same
properties as ${P}_{\mathcal{H}_{\alpha}}( \cdot)$. In the sequel,
$P_{\mathcal{H}_{\alpha}}( \cdot)$  will be used in Section
\ref{sec-3}, while $\widetilde{P}_{\mathcal{H}_{\alpha}}( \cdot)$
will be used to investigate the mean curvature of a set with finite
restricted $\alpha$-Hermite perimeter. For convenience, we give
several properties for $\widetilde{P}_{\mathcal{H}_{\alpha}}(
\cdot)$
 and omit the details of the   proof.
\begin{lemma}\label{le-1.17}
The restricted $\alpha$-Hermite perimeter is  lower semi-continuous. Precisely, if $ 1_{E_k}\rightarrow 1_E \ \ \text{in}\ \
L^1_{\mathrm{loc}}(\Omega)$, where $E_k$ and $E$ are subsets of $\Omega$ for $k=1,2,\ldots$, then
\begin{equation*}
\lim\inf_{k\rightarrow\infty}\widetilde{P}_{\mathcal{H}_{\alpha}}(E_k,\Omega)\ge \widetilde{P}_{\mathcal{H}_{\alpha}}(E,\Omega).
\end{equation*}
\end{lemma}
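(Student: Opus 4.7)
The plan is to mimic the semicontinuity argument used for Lemma \ref{lem1.1}(ii) and Corollary \ref{lem1.1-1}, but with the test-function class $\mathcal{F}_R(\Omega)$ in place of $\mathcal{F}(\Omega)$. Since the subsidiary constraint \eqref{equa5.1} cuts out a subclass of admissible test vector fields rather than enlarging them, the argument should go through with essentially no change, because lower semicontinuity of a supremum of continuous linear functionals is inherited by every subclass.

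First, I would fix an arbitrary $\varphi = (\varphi_1,\dots,\varphi_{2d}) \in \mathcal{F}_R(\Omega)$, so in particular $\varphi \in C^1_c(\Omega;\mathbb R^{2d})$, $\|\varphi\|_\infty \le 1$, and the integral identity \eqref{equa5.1} holds. From the definition of $\widetilde{P}_{\mathcal{H}_\alpha}$ applied to $E_k$,
\[
\widetilde{P}_{\mathcal{H}_\alpha}(E_k,\Omega) \;\ge\; \int_\Omega 1_{E_k}(x)\,\mathrm{div}_{\mathcal{H}_\alpha}\varphi(x)\,dx.
\]
Because $\varphi$ has compact support in $\Omega$ and the coefficients $x_k|x|^{(\alpha-2)/2}$ are continuous (hence bounded on that compact support), the function $\mathrm{div}_{\mathcal{H}_\alpha}\varphi$ lies in $L^\infty(\Omega)$ with compact support. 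The $L^1_{\mathrm{loc}}$-convergence $1_{E_k}\to 1_E$ therefore implies
\[
\int_\Omega 1_{E_k}(x)\,\mathrm{div}_{\mathcal{H}_\alpha}\varphi(x)\,dx \;\longrightarrow\; \int_\Omega 1_E(x)\,\mathrm{div}_{\mathcal{H}_\alpha}\varphi(x)\,dx
\]
as $k\to\infty$, either directly by Hölder/dominated convergence on the compact support, or via the argument in the proof of Lemma \ref{lem1.1}(ii).

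Taking $\liminf_{k\to\infty}$ on both sides of the inequality above yields
\[
\liminf_{k\to\infty}\widetilde{P}_{\mathcal{H}_\alpha}(E_k,\Omega) \;\ge\; \int_E \mathrm{div}_{\mathcal{H}_\alpha}\varphi(x)\,dx.
\]
Since $\varphi \in \mathcal{F}_R(\Omega)$ was arbitrary, I conclude by taking the supremum over $\mathcal{F}_R(\Omega)$ on the right-hand side, which is exactly $\widetilde{P}_{\mathcal{H}_\alpha}(E,\Omega)$.

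The only conceptual obstacle worth flagging is that the admissible class $\mathcal{F}_R(\Omega)$ is defined by a global linear constraint \eqref{equa5.1}, which might \emph{a priori} interact badly with a limiting procedure on the test functions; fortunately, the argument here fixes $\varphi$ once and for all and lets only $E_k$ vary, so the constraint is automatic and no density/approximation inside $\mathcal{F}_R(\Omega)$ is needed. Consequently, the proof reduces to the same one-line supremum-liminf exchange as in the unrestricted case.
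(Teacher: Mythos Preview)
Your proposal is correct and follows exactly the approach the paper intends: the paper states Lemma~\ref{le-1.17} without proof, remarking only that ``using similar methods'' the restricted perimeter inherits the properties of $P_{\mathcal{H}_\alpha}(\cdot)$, which is precisely the supremum--liminf exchange from Lemma~\ref{lem1.1}(ii) carried out over the subclass $\mathcal{F}_R(\Omega)$. Your observation that the constraint \eqref{equa5.1} is imposed on the fixed test field $\varphi$ and not on the varying sets $E_k$ is the key reason the argument transfers verbatim.
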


Similar to Lemma \ref{lem3.6},  we have
\begin{lemma}\label{le-1.18}
For any  subsets $E$ in $
\Omega$, we have
$$\widetilde{P}_{\mathcal{H}_{\alpha}}(E\cap F, \Omega)+\widetilde{P}_{\mathcal{H}_{\alpha}}(E\cup F, \Omega)\le \widetilde{P}_{\mathcal{H}_{\alpha}}(E, \Omega)+\widetilde{P}_{\mathcal{H}_{\alpha}}( F, \Omega).$$
\end{lemma}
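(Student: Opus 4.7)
The strategy is to mirror the derivation of Lemma \ref{lem3.6} from Theorem \ref{theorem2}, adapted to the restricted variation. Define the restricted $\alpha$-Hermite variation of $f\in L^1(\Omega)$ by
\[
\|\widetilde{\nabla}_{\mathcal{H}_\alpha} f\|(\Omega) := \sup_{\varphi \in \mathcal{F}_R(\mathbb R^d)} \int_\Omega f(x)\, \mathrm{div}_{\mathcal{H}_\alpha} \varphi(x) \, dx,
\]
so that $\widetilde{P}_{\mathcal{H}_\alpha}(E,\Omega) = \|\widetilde{\nabla}_{\mathcal{H}_\alpha} 1_E\|(\Omega)$. First I would record two facts. (a) Lower semicontinuity of $\|\widetilde{\nabla}_{\mathcal{H}_\alpha} \cdot\|(\Omega)$ along $L^1_{\mathrm{loc}}$-convergent sequences is immediate from Fatou's lemma applied inside the supremum, generalising Lemma \ref{le-1.17} to arbitrary $L^1$ functions. (b) Theorem \ref{propos4} transfers to the restricted setting: for $u$ with $\|\widetilde{\nabla}_{\mathcal{H}_\alpha} u\|(\Omega)<\infty$ one can construct $u_h \in C_c^\infty(\Omega)$ with $u_h \to u$ in $L^1(\Omega)$ and $\int_\Omega |\nabla_{\mathcal{H}_\alpha} u_h|\, dx$ controlling $\|\widetilde{\nabla}_{\mathcal{H}_\alpha} u\|(\Omega)$ in the limit, because the radial-mollifier and partition-of-unity scheme of Theorem \ref{propos4} preserves the linear constraint \eqref{equa5.1} defining $\mathcal{F}_R$.

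With these ingredients, I would establish the restricted max--min inequality
\[
\|\widetilde{\nabla}_{\mathcal{H}_\alpha} \max\{u,v\}\|(\Omega) + \|\widetilde{\nabla}_{\mathcal{H}_\alpha} \min\{u,v\}\|(\Omega) \le \|\widetilde{\nabla}_{\mathcal{H}_\alpha} u\|(\Omega) + \|\widetilde{\nabla}_{\mathcal{H}_\alpha} v\|(\Omega)
\]
by repeating the proof of Theorem \ref{theorem2}: assume the right-hand side finite, pick smooth approximants $u_h, v_h$ from (b), observe that $\max\{u_h,v_h\} \to \max\{u,v\}$ and $\min\{u_h,v_h\} \to \min\{u,v\}$ in $L^1(\Omega)$, apply the restricted lower semicontinuity from (a) to the left-hand side, and then use the pointwise a.e.\ identity
\[
|\nabla_{\mathcal{H}_\alpha} \max\{u_h, v_h\}(x)| + |\nabla_{\mathcal{H}_\alpha} \min\{u_h, v_h\}(x)| = |\nabla_{\mathcal{H}_\alpha} u_h(x)| + |\nabla_{\mathcal{H}_\alpha} v_h(x)|.
\]
This identity holds because each $A^{\pm}_{k,\alpha}$ is a first-order linear operator (a partial derivative plus a multiplication term), so it respects the case analysis on $\{u_h>v_h\}$ versus $\{u_h<v_h\}$, while the coincidence set $\{u_h = v_h\}$ is negligible for the gradient by Stampacchia's lemma.

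Specializing to $u = 1_E$ and $v = 1_F$ gives $\max\{u,v\} = 1_{E\cup F}$ and $\min\{u,v\} = 1_{E\cap F}$, so the restricted max--min inequality becomes exactly Lemma \ref{le-1.18}. The main technical obstacle is verifying the restricted analogue (b) of Theorem \ref{propos4}: one has to confirm carefully that, for test fields arising during the smoothing, the constraint \eqref{equa5.1} is either already verified or can be restored without affecting the estimates. Once (b) is in hand, the rest of the argument is formal and follows the scheme of Theorem \ref{theorem2} and Lemma \ref{lem3.6} almost verbatim.
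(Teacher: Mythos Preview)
Your approach is exactly the paper's: the paper prefaces this lemma with ``Similar to Lemma~\ref{lem3.6}'' and explicitly omits all details, i.e., it transfers the max--min argument of Theorem~\ref{theorem2} to the restricted variation and then specializes to $u=1_E$, $v=1_F$. Your identification of the restricted analogue of Theorem~\ref{propos4} as the one nontrivial ingredient already goes beyond what the paper spells out; note, however, that the mollifier-plus-cutoff scheme does \emph{not} literally preserve the constraint~\eqref{equa5.1} (multiplying $\varphi*\eta_{\varepsilon_j}$ by the cutoff $f_j$ has no reason to keep the integral condition), so of the two alternatives you mention it is the second---restoring the constraint by subtracting a small fixed correction field, which is harmless because~\eqref{equa5.1} is a single bounded linear functional---that actually carries the argument through.
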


In the same manner, we can list the　 analogues of  previous results
for $\widetilde{P}_{\mathcal{H}_{\alpha}}(\cdot)$, such as the
coarea formula, the Sobolev inequality, the isoperimetric
inequality.

\section{$\alpha$-HBV capacity}\label{sec-3}

Based on the results on $\alpha$-Hermite BV spaces,  we introduce
the $\alpha$-HBV capacity and investigate its properties.
\begin{definition}\label{defi2}
For a set  $E\subseteq  \mathbb R^d$, let $\mathcal{A}(E,
\mathcal{BV}_{\mathcal{H}_{\alpha}}(\mathbb R^d))$ be the class of
admissible functions on $\mathbb R^d$, i.e., functions $f\in
\mathcal{BV}_{\mathcal{H}_{\alpha}}(\mathbb R^d)$ satisfying $0\le
f\le 1$ and $f=1$ in a neighborhood of $E$ (an open set containing
$E$). The $\alpha$-HBV capacity of $E$ is defined by
\begin{equation*}\label{equa6}
\mathrm{cap}(E, \mathcal{BV}_{\mathcal{H}_{\alpha}}(\mathbb
R^d)):=\inf_{f\in\mathcal{A}(E, \mathcal{BV}_{\mathcal{H}_{\alpha}}(\mathbb
R^d))}\Big\{\parallel f
\parallel_{L^1}+\parallel \nabla_{\mathcal{H}_{\alpha}} f \parallel(\mathbb R^d)\Big\}.
\end{equation*}
\end{definition}

Via the co-area formula for $\alpha$-HBV functions in Theorem
\ref{thm2.6},  we obtain the following basic assertions.
\begin{theorem}
\label{lemm4} A geometric description of the $\alpha$-HBV capacity
of a set in  $\mathbb R^d$ is given as follows:
\item{{\rm (i)}}  For any set $K\subseteq \mathbb R^d$,
\begin{equation*}
\mathrm{cap}(K, \mathcal{BV}_{\mathcal{H}_{\alpha}}(\mathbb R^d))\approx\inf_A \Big\{|A|+P_{\mathcal{H}_{\alpha}}(A)\Big\},
\end{equation*}where the infimum is taken over all   sets $A\subseteq \mathbb R^d$ such that $K\subseteq
int(A)$.

\item{{\rm (ii)}} For any compact  set $K\subseteq \mathbb R^d$,
\begin{equation*}
\mathrm{cap}(K, \mathcal{BV}_{\mathcal{H}_{\alpha}}(\mathbb
R^d))\approx\inf_A \Big\{|A|+P_{\mathcal{H}_{\alpha}}(A)\Big\},
\end{equation*}
where the infimum is taken over all bounded open sets $A$ with smooth boundary in $\mathbb R^d$ containing $K$.
\end{theorem}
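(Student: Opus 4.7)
My plan for part (i) is a standard two-sided comparison. For the upper bound $\mathrm{cap}(K, \mathcal{BV}_{\mathcal{H}_{\alpha}}(\mathbb R^d)) \lesssim \inf_A\{|A| + P_{\mathcal{H}_{\alpha}}(A)\}$, I would observe that for any set $A$ with $K \subseteq \mathrm{int}(A)$ and $|A| + P_{\mathcal{H}_{\alpha}}(A) < \infty$, the characteristic function $1_A$ is an admissible competitor in Definition~\ref{defi2}: it satisfies $0 \le 1_A \le 1$, equals $1$ on the open neighborhood $\mathrm{int}(A)$ of $K$, and $\|1_A\|_{L^1} + \|\nabla_{\mathcal{H}_{\alpha}} 1_A\|(\mathbb R^d) = |A| + P_{\mathcal{H}_{\alpha}}(A)$ by the very definition (\ref{eq-2.1}) of the $\alpha$-Hermite perimeter. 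For the reverse estimate, I would fix an admissible $f \in \mathcal{A}(K, \mathcal{BV}_{\mathcal{H}_{\alpha}}(\mathbb R^d))$ and set $E_t = \{x : f(x) > t\}$. Since $0 \le f \le 1$, layer-cake gives $\|f\|_{L^1} = \int_0^1 |E_t|\, dt$, while the coarea formula of Theorem \ref{thm2.6} applied to the nonnegative function $f$ yields $\|\nabla_{\mathcal{H}_{\alpha}} f\|(\mathbb R^d) \approx \int_0^1 P_{\mathcal{H}_{\alpha}}(E_t)\, dt$. Because $f \equiv 1$ on some open neighborhood $U$ of $K$, each $E_t$ with $t \in (0,1)$ satisfies $K \subseteq U \subseteq \mathrm{int}(E_t)$, so $E_t$ is a valid competitor. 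Integrating the pointwise lower bound $|E_t| + P_{\mathcal{H}_{\alpha}}(E_t) \ge \inf_A\{|A| + P_{\mathcal{H}_{\alpha}}(A)\}$ in $t \in (0,1)$ and then taking the infimum over admissible $f$ closes the argument.

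For part (ii), only the estimate $\inf_A \{|A| + P_{\mathcal{H}_{\alpha}}(A)\} \lesssim \mathrm{cap}(K, \mathcal{BV}_{\mathcal{H}_{\alpha}}(\mathbb R^d))$ with $A$ restricted to bounded open sets with smooth boundary is new; the opposite bound is a special case of the upper-bound argument in (i) since every such $A$ is already a competitor there. Given an admissible $f$, I would invoke Theorem \ref{propos4} to get $f_h \in C^\infty_c(\mathbb R^d)$ with $f_h \to f$ in $L^1$ and $\int |\nabla_{\mathcal{H}_{\alpha}} f_h|\, dx \to \|\nabla_{\mathcal{H}_{\alpha}} f\|(\mathbb R^d)$, then truncate via composition with $s \mapsto \min(\max(s,0),1)$, using Theorem \ref{theorem2} to control the variation while ensuring $0 \le f_h \le 1$. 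By Sard's theorem, for a.e.\ $t \in (0,1)$ the superlevel set $\{f_h > t\}$ is a bounded open set with $C^\infty$ boundary. Applying coarea and Chebyshev's inequality, I can select a smooth level $t^* \in (0,1)$ with
\[
|\{f_h > t^*\}| + P_{\mathcal{H}_{\alpha}}(\{f_h > t^*\}) \lesssim \int_0^1\bigl(|\{f_h > t\}| + P_{\mathcal{H}_{\alpha}}(\{f_h > t\})\bigr)\, dt \approx \|f_h\|_{\mathcal{BV}_{\mathcal{H}_{\alpha}}(\mathbb R^d)}.
\]
Choosing the mollification radii sufficiently small forces $f_h \equiv 1$ on a fixed open neighborhood of $K$, so $K \subseteq \{f_h > t^*\}$ and $\{f_h > t^*\}$ is a valid competitor. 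Sending $h \to \infty$ and then taking infimum over $f$ yields the claim.

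The main technical obstacle is preserving the admissibility property through the smoothing-and-truncation procedure, namely guaranteeing $f_h > t^*$ on an open neighborhood of $K$ after Theorem \ref{propos4}, which a priori only delivers $L^1$-convergence. This is resolved by choosing each mollification parameter $\varepsilon_j$ in the proof of Theorem \ref{propos4} smaller than the distance from $K$ to $\partial U$, where $U$ is the open set on which the original $f$ equals $1$: the convolution $(u f_j)_{\varepsilon_j}$ then agrees with $f_j$ identically on a fixed inner neighborhood of $K$, so $f_h \equiv 1$ there. A secondary subtlety is using the appropriate form of the coarea formula when $f \ge 0$: the layer-cake expansion runs only over $t \in (0, \infty)$, which matters because $P_{\mathcal{H}_{\alpha}}(\mathbb R^d)$ is generally not finite for $\alpha > 1$, so the $(-\infty,\infty)$ form of Theorem \ref{thm2.6} must be interpreted through the nonnegative layer-cake rather than applied verbatim.
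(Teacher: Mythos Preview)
Your proof of (i) matches the paper's argument essentially verbatim: the characteristic function $1_A$ serves as an admissible competitor for the upper bound, and the coarea formula (Theorem~\ref{thm2.6}) combined with the layer-cake representation of $\|f\|_{L^1}$ on the superlevel sets $E_t$, $t\in(0,1)$, gives the lower bound. For (ii) the paper simply writes ``similar to the proof of (i)'' and omits the details; your completion via Theorem~\ref{propos4}, Sard's theorem, and the observation that the mollification radii can be chosen small enough to preserve $f_h\equiv 1$ on a neighborhood of the compact $K$ is precisely the argument implicitly required, and your side remark about restricting the coarea integral to $(0,1)$ (since $P_{\mathcal H_\alpha}(\mathbb R^d)=\infty$ for $\alpha>1$) is a legitimate refinement that the paper glosses over.
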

\begin{proof}(i)
If $A\subseteq \mathbb R^d$ with $K\subseteq  int(A)$ and
$|A|+P_{\mathcal{H}_{\alpha}}(A)<\infty$, $1_A\in \mathcal{A}(K,
\mathcal{BV}_{\mathcal{H}_{\alpha}}(\mathbb R^d))$ and hence,
$$\mathrm{cap}(K, \mathcal{BV}_{\mathcal{H}_{\alpha}}(\mathbb R^d))\le |A|+ P_{\mathcal{H}_{\alpha}}(A).$$
By taking the infimum over all such sets $A$, we obtain
$$\mathrm{cap}(K, \mathcal{BV}_{\mathcal{H}_{\alpha}}(\mathbb R^d))\le \inf_A \Big\{|A|+ P_{\mathcal{H}_{\alpha}}(A)\Big\}.$$

In order to prove the reverse inequality, we may assume that
$\mathrm{cap}(K, \mathcal{BV}_{\mathcal{H}_{\alpha}}(\mathbb
R^d))<\infty.$ Let $\varepsilon>0$ and $f\in \mathcal{A}(K,
\mathcal{BV}_{\mathcal{H}_{\alpha}}(\mathbb R^d)) $ such that
$$\parallel f
\parallel_{L^1}+\parallel \nabla_{\mathcal{H}_{\alpha}} f
\parallel(\mathbb R^d)<\mathrm{cap}(K,
\mathcal{BV}_{\mathcal{H}_{\alpha}}(\mathbb R^d))+\varepsilon.$$
Using the co-area formula (\ref{eq2.8}) and the Cavalieri principle,
we have
\begin{eqnarray*}
\int_{\mathbb R^d}f(x)dx+\parallel \nabla_{\mathcal{H}_{\alpha}} f \parallel(\mathbb R^d)&\approx&\int^1_0\Big[\Big|\Big\{x\in \mathbb R^d: f(x)>t\Big\}\Big|+ P_{\mathcal{H}_{\alpha}}(\{x\in \mathbb R^d: f(x)>t\}\Big]dt\\
&\gtrsim&   \inf_A \{|A|+ P_{\mathcal{H}_{\alpha}}(A)\},
\end{eqnarray*}
where we have used the fact:
$K\subseteq int\Big\{x\in \mathbb R^d: f(x)>t\Big\}$ for
$0<t<1.$
Then
$$ \inf_A \Big\{|A|+ P_{\mathcal{H}_{\alpha}}(A)\Big\}\lesssim\mathrm{cap}(K, \mathcal{BV}_{\mathcal{H}_{\alpha}}(\mathbb R^d))+\varepsilon.$$
The desired inequality now follows by letting
$\varepsilon\rightarrow 0$.

(ii) Using the co-area formula (\ref{eq2.8})  and the Cavalieri
principle again,    we can also  conclude that  (ii) is valid
similar to the proof  of (i) and so we omit the details here.
\end{proof}

\subsection{Measure-theoretic nature of $\alpha$-HBV capacity}\label{sec-3.1}

\begin{theorem}\label{thm8} Assume $A,B$ are  subsets of $ \mathbb R^d$.

$\mathrm{(i)}$ $$\mathrm{cap}(\emptyset,
\mathcal{BV}_{\mathcal{H}_{\alpha}}(\mathbb R^d))=0.$$

$\mathrm{(ii)}$ If  $A\subseteq B$, then $$\mathrm{cap}(A,
\mathcal{BV}_{\mathcal{H}_{\alpha}}(\mathbb R^d))\le \mathrm{cap}(B,
\mathcal{BV}_{\mathcal{H}_{\alpha}}(\mathbb   R^d)).$$

%
%
%

$\mathrm{(iii)}$ \begin{eqnarray*}\mathrm{cap}(A\cup B,
\mathcal{BV}_{\mathcal{H}_{\alpha}}(\mathbb R^d))&+&\mathrm{cap}(A
\cap B, \mathcal{BV}_{\mathcal{H}_{\alpha}}(\mathbb R^d))\\ &\le&
\mathrm{cap}(A, \mathcal{BV}_{\mathcal{H}_{\alpha}}(\mathbb
R^d))+\mathrm{cap}(B, \mathcal{BV}_{\mathcal{H}_{\alpha}}(\mathbb
R^d)).\end{eqnarray*}

$\mathrm{(iv)}$ If $A_k, k=1,2,\ldots$, are   subsets in $\mathbb
R^d$, then
$$\mathrm{cap}(\cup^\infty_{k=1}A_k, \mathcal{BV}_{\mathcal{H}_{\alpha}}(\mathbb R^d))\le \sum^\infty_{k=1}\mathrm{cap}(A_k, \mathcal{BV}_{\mathcal{H}_{\alpha}}(\mathbb R^d)).$$

$\mathrm{(v)}$ For any sequence $\{A_k\}^\infty_{k=1}$ of
 subsets of $\mathbb R^d$ with $A_1\subseteq A_2\subseteq A_3\subseteq\cdots,$ $$\lim_{k\rightarrow\infty}\mathrm{cap}(A_k,
\mathcal{BV}_{\mathcal{H}_{\alpha}}(\mathbb
R^d))=\mathrm{cap}(\cup^\infty_{k=1}A_k,
\mathcal{BV}_{\mathcal{H}_{\alpha}}(\mathbb R^d)).$$

$\mathrm{(vi)}$ If $A_k, k=1,2,\ldots$, are compact sets in $\mathbb
R^d$ and   $A_1\supseteq A_2\supseteq A_3\supseteq\cdots$,
 then$$\lim_{k\rightarrow
\infty}\mathrm{cap}(A_k)=\mathrm{cap}(\cap^\infty_{k=1}A_k).$$
\end{theorem}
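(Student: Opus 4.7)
Parts (i) and (ii) are immediate from the definition: the zero function lies in $\mathcal{A}(\emptyset,\mathcal{BV}_{\mathcal{H}_{\alpha}}(\mathbb R^d))$ because every open set is a neighborhood of $\emptyset$, giving (i); and when $A\subseteq B$ every open neighborhood of $B$ also contains $A$, so $\mathcal{A}(B,\cdot)\subseteq\mathcal{A}(A,\cdot)$, which gives (ii). For (iii), given $\varepsilon>0$ I would pick $f\in\mathcal{A}(A,\cdot)$ and $g\in\mathcal{A}(B,\cdot)$ with norms within $\varepsilon$ of the respective capacities, and observe that $\max\{f,g\}\in\mathcal{A}(A\cup B,\cdot)$ and $\min\{f,g\}\in\mathcal{A}(A\cap B,\cdot)$. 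Combining Theorem \ref{theorem2} with the pointwise identity $\max\{f,g\}+\min\{f,g\}=f+g$ yields
\[
\|\max\{f,g\}\|_{\mathcal{BV}_{\mathcal{H}_{\alpha}}}+\|\min\{f,g\}\|_{\mathcal{BV}_{\mathcal{H}_{\alpha}}}\le\|f\|_{\mathcal{BV}_{\mathcal{H}_{\alpha}}}+\|g\|_{\mathcal{BV}_{\mathcal{H}_{\alpha}}},
\]
and (iii) follows upon letting $\varepsilon\to 0$.

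For (iv) I would pick, for each $k$, a function $f_k\in\mathcal{A}(A_k,\cdot)$ with $\|f_k\|_{\mathcal{BV}_{\mathcal{H}_{\alpha}}}\le\mathrm{cap}(A_k,\cdot)+\varepsilon 2^{-k}$, set $g_n:=\max\{f_1,\ldots,f_n\}$ and $g:=\sup_k f_k$, and iterate the max--min estimate from the previous paragraph to obtain $\|g_n\|_{\mathcal{BV}_{\mathcal{H}_{\alpha}}}\le\sum_{k=1}^n\|f_k\|_{\mathcal{BV}_{\mathcal{H}_{\alpha}}}$. Since $g_n\nearrow g$ pointwise with $0\le g_n\le 1$, the convergence also holds in $L^1_{\mathrm{loc}}(\mathbb R^d)$; Lemma \ref{lem1.1}(ii) combined with Fatou's lemma then gives $\|g\|_{\mathcal{BV}_{\mathcal{H}_{\alpha}}}\le\sum_{k=1}^{\infty}\mathrm{cap}(A_k,\cdot)+\varepsilon$, and $g$ is admissible for $\bigcup_k A_k$ because $g=1$ on $\bigcup_k\{f_k=1\}$, which is an open neighborhood of $\bigcup_k A_k$.

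Part (v) is the principal obstacle. Monotonicity (ii) gives one direction $\lim_k\mathrm{cap}(A_k,\cdot)\le\mathrm{cap}(\bigcup_k A_k,\cdot)$. For the reverse, assuming $c:=\lim_k\mathrm{cap}(A_k,\cdot)<\infty$, I would reuse the same $f_k$ and $g_k:=\max\{f_1,\ldots,f_k\}$ as above and apply Theorem \ref{theorem2} to the pair $(f_k,g_{k-1})$; the crucial observation is that $\min\{f_k,g_{k-1}\}$ equals $1$ on the intersection of the open neighborhoods on which $f_k$ and $g_{k-1}$ are $1$, so $\min\{f_k,g_{k-1}\}\in\mathcal{A}(A_{k-1},\cdot)$ and therefore $\|\min\{f_k,g_{k-1}\}\|_{\mathcal{BV}_{\mathcal{H}_{\alpha}}}\ge\mathrm{cap}(A_{k-1},\cdot)$. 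This yields
\[
\|g_k\|_{\mathcal{BV}_{\mathcal{H}_{\alpha}}}\le\|g_{k-1}\|_{\mathcal{BV}_{\mathcal{H}_{\alpha}}}+\mathrm{cap}(A_k,\cdot)-\mathrm{cap}(A_{k-1},\cdot)+\varepsilon 2^{-k},
\]
and a telescoping sum produces $\|g_n\|_{\mathcal{BV}_{\mathcal{H}_{\alpha}}}\le\mathrm{cap}(A_n,\cdot)+\varepsilon\le c+\varepsilon$ uniformly in $n$. Passing to $g=\sup_k g_k$ exactly as in the previous paragraph and letting $\varepsilon\to 0$ completes (v). The delicate part is verifying the admissibility of each $\min\{f_k,g_{k-1}\}$ for $A_{k-1}$, which is precisely what allows the telescoping cancellation to yield a bound on $\|g_n\|_{\mathcal{BV}_{\mathcal{H}_{\alpha}}}$ independent of $n$ rather than the crude sum $\sum_{k=1}^n\mathrm{cap}(A_k,\cdot)$.

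Finally, for (vi), set $A:=\bigcap_k A_k$, which is compact; monotonicity gives $\mathrm{cap}(A,\cdot)\le\lim_k\mathrm{cap}(A_k,\cdot)$. For the reverse, fix $\varepsilon>0$ and choose $f\in\mathcal{A}(A,\cdot)$ with $\|f\|_{\mathcal{BV}_{\mathcal{H}_{\alpha}}}\le\mathrm{cap}(A,\cdot)+\varepsilon$; by definition $f=1$ on some open neighborhood $V$ of $A$. Since the $A_k$ are compact, decreasing, and $\bigcap_k A_k\subseteq V$, the closed sets $A_k\setminus V$ inside the compact space $A_1$ have empty intersection, so by the finite intersection property $A_k\subseteq V$ for all sufficiently large $k$. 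For such $k$ the same $f$ lies in $\mathcal{A}(A_k,\cdot)$, whence $\mathrm{cap}(A_k,\cdot)\le\mathrm{cap}(A,\cdot)+\varepsilon$, and sending $\varepsilon\to 0$ finishes the proof.
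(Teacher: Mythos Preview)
Your proposal is correct and follows essentially the same route as the paper: (i)--(ii) are declared immediate, (iii) uses the max/min of near-optimal admissible functions together with Theorem~\ref{theorem2}, (iv) takes $\sup_k f_k$ and applies lower semicontinuity, and (v) is the same telescoping argument built on the observation that $\min\{f_k,g_{k-1}\}\in\mathcal{A}(A_{k-1},\cdot)$. The only noteworthy difference is in (vi): the paper bounds $\lim_k\mathrm{cap}(A_k,\cdot)$ by $\mathrm{cap}(U,\cdot)$ for every open $U\supseteq A$ and then invokes the outer regularity result (Corollary~\ref{cor8}(i)) to descend to $\mathrm{cap}(A,\cdot)$, whereas you work directly with a single near-optimal admissible $f$ for $A$ and use its open set $\{f=1\}$ as the neighborhood; your version is slightly more self-contained since it avoids the forward reference to Corollary~\ref{cor8}.
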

 \begin{proof}

(i)-(ii). Statements (i) and (ii) are the evident consequences of
Definition \ref{defi2}.

(iii). Without loss of generality, we may assume
$$\mathrm{cap}(A, \mathcal{BV}_{\mathcal{H}_{\alpha}}(\mathbb R^d))+\mathrm{cap}(B,
\mathcal{BV}_{\mathcal{H}_{\alpha}}(\mathbb R^d))<\infty.$$ For any
$\varepsilon>0$, there are two functions $\phi\in
\mathcal{A}(A,\mathcal{BV}_{\mathcal{H}_{\alpha}}(\mathbb R^d))$ and
$\psi\in \mathcal{A}(B,\mathcal{BV}_{\mathcal{H}_{\alpha}}(\mathbb
R^d))$, such that
$$\begin{cases}
\parallel  \phi\parallel_{L^1}+\parallel \nabla_{\mathcal{H}_{\alpha}} \phi\parallel(\mathbb R^d)<
\mathrm{cap}(A, \mathcal{BV}_{\mathcal{H}_{\alpha}}(\mathbb R^d))+\frac{\varepsilon}{2};\\
\parallel  \psi\parallel_{L^1}+\parallel \nabla_{\mathcal{H}_{\alpha}} \psi\parallel(\mathbb R^d)< \mathrm{cap}(B,
\mathcal{BV}_{\mathcal{H}_{\alpha}}(\mathbb
R^d))+\frac{\varepsilon}{2}.
\end{cases}$$
Let
$$\varphi_1=\max\{\phi,\psi\}\ \ \&\ \ {\varphi}_2=\min\{\phi,\psi\}.$$
It is easy to see that
$$
\varphi_1\in \mathcal{A}(A\cup B,
\mathcal{BV}_{\mathcal{H}_{\alpha}}(\mathbb R^d))\ \ \& \ \
\varphi_2\in \mathcal{A}(A\cap B,
\mathcal{BV}_{\mathcal{H}_{\alpha}}(\mathbb R^d)).
$$
Then by Theorem \ref{theorem2},
\begin{eqnarray*}
&&\mathrm{cap}(A\cup B, \mathcal{BV}_{\mathcal{H}_{\alpha}}(\mathbb
R^d))+\mathrm{cap}(A
\cap B, \mathcal{BV}_{\mathcal{H}_{\alpha}}(\mathbb R^d))\\
&&\le \int_{\mathbb
R^d}\varphi_1(x)dx+\int_{\mathbb R^d}\varphi_2(x)dx+
\parallel \nabla_{\mathcal{H}_{\alpha}} \varphi_1\parallel(\mathbb R^d)+ \parallel \nabla_{\mathcal{H}_{\alpha}} \varphi_2\parallel(\mathbb R^d)\\
&&\le \int_{\mathbb
R^d}\phi(x)dx+\int_{\mathbb R^d}\psi(x)dx \parallel \nabla_{\mathcal{H}_{\alpha}} \phi\parallel(\mathbb R^d)+ \parallel \nabla_{\mathcal{H}_{\alpha}} \psi\parallel(\mathbb R^d)\\
&&\le \mathrm{cap}(A, \mathcal{BV}_{\mathcal{H}_{\alpha}}(\mathbb
R^d))+\mathrm{cap}(B, \mathcal{BV}_{\mathcal{H}_{\alpha}}(\mathbb
R^d))+\varepsilon.
\end{eqnarray*}
The assertion (iii) is proved.

(iv).  Suppose
$$
\sum^\infty_{k=1}\mathrm{cap}(A_k,
\mathcal{BV}_{\mathcal{H}_{\alpha}}(\mathbb R^d))<\infty.
$$
For any $\varepsilon>0$ and $k=1,2,\ldots$, there is $ f_k\in
\mathcal{A}(A_k, \mathcal{BV}_{\mathcal{H}_{\alpha}}(\mathbb R^d)) $
such that
$$\parallel  f_k\parallel_{L^1}+\parallel \nabla_{\mathcal{H}_{\alpha}} f_k\parallel(\mathbb R^d)<\mathrm{cap}(A_k, \mathcal{BV}_{\mathcal{H}_{\alpha}}(\mathbb R^d))+\frac{\varepsilon}{2^k}.$$
Setting $f=\sup_kf_k$ gives
$$\int_{\mathbb R^d}f(x)dx\le \sum^\infty_{k=1}\int_{\mathbb R^d}f_k(x)dx<\sum^\infty_{k=1}\mathrm{cap}(A_k, \mathcal{BV}_{\mathcal{H}_{\alpha}}(\mathbb R^d))+\frac{\varepsilon}{2^k}<\infty,$$
which implies $f\in L^1(\mathbb R^d)$.

Via the lower semicontinuity (\ref{equation1}) of the
$\alpha$-Hermite variation we get
\begin{eqnarray*}\int_{\mathbb R^d}f(x)dx+\parallel \nabla_{\mathcal{H}_{\alpha}} f\parallel(\mathbb R^d)&\le&\sum^\infty_{k=1}\int_{\mathbb R^d}f_k(x)dx+\lim\inf_{k\rightarrow
\infty}\parallel \nabla_{\mathcal{H}_{\alpha}} \max\{f_1,\cdots,f_k\}\parallel(\mathbb R^d)\\
&\le& \sum^\infty_{k=1}\int_{\mathbb R^d}f_k(x)dx+\sum^\infty_{k=1}\parallel \nabla_{\mathcal{H}_{\alpha}} f_k\parallel(\mathbb R^d)\\
&\le& \sum^\infty_{k=1}\mathrm{cap}(A_k,
\mathcal{BV}_{\mathcal{H}_{\alpha}}(\mathbb R^d))+\varepsilon.
\end{eqnarray*}
Then we have $f\in \mathcal{A}(\cup^\infty_{k=1}A_k,
\mathcal{BV}_{\mathcal{H}_{\alpha}}(\mathbb R^d))$ and this
completes the proof of (iv) via letting $\varepsilon\to 0$.

(v). It is obvious that $$\lim_{k\rightarrow\infty}\mathrm{cap}(A_k,
\mathcal{BV}_{\mathcal{H}_{\alpha}}(\mathbb R^d))\le
\mathrm{cap}(\cup^\infty_{k=1}A_k,
\mathcal{BV}_{\mathcal{H}_{\alpha}}(\mathbb R^d)).$$ The equality
holds if
$$\lim_{k\rightarrow\infty}\mathrm{cap}(A_k, \mathcal{BV}_{\mathcal{H}_{\alpha}}(\mathbb R^d))=\infty.$$
Let $\varepsilon > 0$ and assume
$$\lim_{k\rightarrow\infty}\mathrm{cap}(A_k, \mathcal{BV}_{\mathcal{H}_{\alpha}}(\mathbb R^d))<\infty.$$
For   $k = 1, 2, \ldots$, there is
$$
f_k\in \mathcal{A}(A_k, \mathcal{BV}_{\mathcal{H}_{\alpha}}(\mathbb
R^d))
$$
such that
$$\parallel  f_k\parallel_{L^1}+\parallel \nabla_{\mathcal{H}_{\alpha}} f_k\parallel(\mathbb R^d)<\mathrm{cap}(A_k, \mathcal{BV}_{\mathcal{H}_{\alpha}}(\mathbb R^d))+\frac{\varepsilon}{2^k}.$$
Set
$$
\begin{cases}
\phi_k=\max_{1\le i\le k}f_i=\max\{\phi_{k-1},\ f_{k}\};\\
\phi_0=0;\\
A_0=\emptyset;\\
\varphi_k=\min\{\phi_{k-1},\ f_k\}.
\end{cases}
$$
Then
$$\phi_k,\varphi_k\in \mathcal{BV}_{\mathcal{H}_{\alpha}}(\mathbb R^d)\ \ \&\ \ A_{k-1}\subseteq int\{x\in \mathbb R^d: \varphi_k(x)=1\}.
$$
Since $\phi_k=\max\{\phi_{k-1}, \phi_k\}$, an application of Theorem
\ref{theorem2} derives
$$
\parallel \nabla_{\mathcal{H}_{\alpha}} \max\{\phi_{k-1}, \phi_k\}\parallel(\mathbb R^d)+\parallel \nabla_{\mathcal{H}_{\alpha}} \min\{\phi_{k-1}, \phi_k\}\parallel(\mathbb R^d)\le
 \parallel \nabla_{\mathcal{H}_{\alpha}} \phi_{k-1}\parallel(\mathbb R^d)
+\parallel \nabla_{\mathcal{H}_{\alpha}} \phi_{k}\parallel(\mathbb
R^d),
$$
and then
\begin{eqnarray*}
&&\parallel   \phi_{k}\parallel_{L^1}+\parallel
\nabla_{\mathcal{H}_{\alpha}} \phi_{k}\parallel(\mathbb
R^d)+\mathrm{cap}(A_{k-1},
\mathcal{BV}_{\mathcal{H}_{\alpha}}(\mathbb R^d))\\
&&\le \parallel   \phi_{k}\parallel_{L^1}+ \parallel
\nabla_{\mathcal{H}_{\alpha}} \phi_{k}\parallel(\mathbb R^d)
+\parallel   \varphi_{k}\parallel_{L^1}+\parallel \nabla_{\mathcal{H}_{\alpha}} \varphi_{k}\parallel(\mathbb R^d)\\
&&\le  \parallel   \phi_{k}\parallel_{L^1}+\parallel
\phi_{k-1}\parallel_{L^1}+\parallel \nabla_{\mathcal{H}_{\alpha}}
\phi_{k}\parallel(\mathbb R^d)+\parallel \nabla_{\mathcal{H}_{\alpha}} \phi_{k-1}\parallel(\mathbb R^d)\\
&&\le \parallel   \phi_{k-1}\parallel_{L^1}+\parallel
\nabla_{\mathcal{H}_{\alpha}} \phi_{k-1}\parallel(\mathbb
R^d)+\mathrm{cap}(A_k, \mathcal{BV}_{\mathcal{H}_{\alpha}}(\mathbb
R^d))+\frac{\varepsilon}{2^k},
\end{eqnarray*}
where we have used the fact that $A_{k-1}\subseteq A_{k}$.
Therefore,
\begin{eqnarray*}
&&\parallel   \phi_{k}\parallel_{L^1}+\parallel
\nabla_{\mathcal{H}_{\alpha}} \phi_{k}\parallel(\mathbb
R^d)-\parallel \phi_{k-1}\parallel_{L^1}-
\parallel \nabla_{\mathcal{H}_{\alpha}} \phi_{k-1}\parallel(\mathbb R^d)\\
&&\le \mathrm{cap}(A_{k},
\mathcal{BV}_{\mathcal{H}_{\alpha}}(\mathbb
R^d))-\mathrm{cap}(A_{k-1},
\mathcal{BV}_{\mathcal{H}_{\alpha}}(\mathbb
R^d))+\frac{\varepsilon}{2^k}.
\end{eqnarray*}

By adding the above inequalities from $k=1$ to $k=j$,  we get
$$\parallel   \phi_{j}\parallel_{L^1}+\parallel \nabla_{\mathcal{H}_{\alpha}} \phi_j\parallel(\mathbb R^d)\le \mathrm{cap}(A_{j},
\mathcal{BV}_{\mathcal{H}_{\alpha}}(\mathbb R^d))+\varepsilon.$$ Let
$\tilde{\phi}=\lim_{j\rightarrow\infty}\phi_j$. Via the monotone
convergence theorem,  we obtain
$$\int_{\mathbb R^d}\tilde{\phi}(x)dx=\lim_{j\rightarrow\infty}\int_{\mathbb R^d}\phi_j(x)dx\le\lim_{j\rightarrow\infty}\mathrm{cap}(A_{j},
\mathcal{BV}_{\mathcal{H}_{\alpha}}(\mathbb R^d))+\varepsilon.$$
Then via the  lower semicontinuity (\ref{equation1}) of the
$\alpha$-Hermite variation, we have
$$ \tilde{\phi}\in \mathcal{A}(\cup^\infty_{j=1}A_j,
\mathcal{BV}_{\mathcal{H}_{\alpha}}(\mathbb R^d))
$$
and
\begin{eqnarray*} \mathrm{cap}(\cup^\infty_{j=1}A_j,
\mathcal{BV}_{\mathcal{H}_{\alpha}}(\mathbb R^d))&\le& \parallel   \tilde{\phi} \parallel_{L^1}+\parallel \nabla_{\mathcal{H}_{\alpha}} \tilde{\phi}\parallel(\mathbb R^d)\\
&\le& \liminf_{j\rightarrow\infty}\big(\int_{\mathbb R^d}\phi_j(x)dx+\parallel \nabla_{\mathcal{H}_{\alpha}} \phi_j\parallel(\mathbb R^d)\big)\\
&\le& \lim_{j\rightarrow\infty}\mathrm{cap}(A_{j},
\mathcal{BV}_{\mathcal{H}_{\alpha}}(\mathbb R^d))+\varepsilon.
\end{eqnarray*}

(vi). Let $A=\cap^\infty_{k=1}A_k$.  By monotonicity,
$$\mathrm{cap}({\bigcap^\infty_{k=1}A_k}, \mathcal{BV}_{\mathcal{H}_{\alpha}}(\mathbb R^d))\le \lim_{k\rightarrow \infty}
\mathrm{cap}(A_k, \mathcal{BV}_{\mathcal{H}_{\alpha}}(\mathbb
R^d)).$$ Let $U$ be an open set containing $A$. Then by the
compactness of $A$,  we know that $A_k\subseteq U$ for all
sufficiently large $k$. Therefore,
$$\lim_{k\rightarrow \infty}
\mathrm{cap}(A_k, \mathcal{BV}_{\mathcal{H}_{\alpha}}(\mathbb
R^d))\le \mathrm{cap}(U, \mathcal{BV}_{\mathcal{H}_{\alpha}}(\mathbb
R^d)).$$ Corollary \ref{cor8} implies that
 $\mathrm{cap}(\cdot, \mathcal{BV}_{\mathcal{H}_{\alpha}}(\mathbb R^d))$    is an outer
capacity. Then we obtain the claim by taking infimum over all open
sets $U$ containing $A$.

\end{proof}

\begin{corollary}\label{cor8}

\item{{\rm (i)}} If $E\subseteq \mathbb R^d$, then
$$\mathrm{cap}(E,
\mathcal{BV}_{\mathcal{H}_{\alpha}}(\mathbb
R^d))=\inf_{\mathrm{open}\,
O\supseteq E}\Big\{\mathrm{cap}(O,
\mathcal{BV}_{\mathcal{H}_{\alpha}}(\mathbb R^d)) \Big\}.$$

 \item{{\rm (ii)}} If $E\subseteq \mathbb R^d$ is a Borel set, then
$$\mathrm{cap}(E,
\mathcal{BV}_{\mathcal{H}_{\alpha}}(\mathbb
R^d))=\sup_{\mathrm{compact}\, K\subseteq E}\Big\{\mathrm{cap}(K,
\mathcal{BV}_{\mathcal{H}_{\alpha}}(\mathbb R^d))\Big\}.$$
\end{corollary}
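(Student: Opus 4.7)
The plan is to prove (i) directly from the admissibility definition plus monotonicity, and then to deduce (ii) from Choquet's capacitability theorem using the measure-theoretic properties already recorded in Theorem \ref{thm8}. For part (i), monotonicity (Theorem \ref{thm8}(ii)) immediately gives $\mathrm{cap}(E,\mathcal{BV}_{\mathcal{H}_{\alpha}}(\mathbb R^d)) \le \inf_{\text{open }O\supseteq E} \mathrm{cap}(O,\mathcal{BV}_{\mathcal{H}_{\alpha}}(\mathbb R^d))$. For the reverse, fix $\varepsilon>0$ and choose $f \in \mathcal{A}(E,\mathcal{BV}_{\mathcal{H}_{\alpha}}(\mathbb R^d))$ with $\|f\|_{L^1}+\|\nabla_{\mathcal{H}_{\alpha}}f\|(\mathbb R^d) < \mathrm{cap}(E,\mathcal{BV}_{\mathcal{H}_{\alpha}}(\mathbb R^d))+\varepsilon$. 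By admissibility there exists an open set $O$ with $E \subseteq O$ and $f \equiv 1$ on $O$. Since $O$ is itself an open set containing $O$ on which $f=1$, the same $f$ is admissible for $O$; hence $\mathrm{cap}(O,\mathcal{BV}_{\mathcal{H}_{\alpha}}(\mathbb R^d)) \le \|f\|_{L^1}+\|\nabla_{\mathcal{H}_{\alpha}}f\|(\mathbb R^d) < \mathrm{cap}(E,\mathcal{BV}_{\mathcal{H}_{\alpha}}(\mathbb R^d))+\varepsilon$, and letting $\varepsilon \to 0$ yields (i).

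For part (ii), the inequality $\sup\{\mathrm{cap}(K,\mathcal{BV}_{\mathcal{H}_{\alpha}}(\mathbb R^d)):\ K\subseteq E \text{ compact}\} \le \mathrm{cap}(E,\mathcal{BV}_{\mathcal{H}_{\alpha}}(\mathbb R^d))$ is again monotonicity. For the reverse direction I would invoke Choquet's capacitability theorem. The combined content of Theorem \ref{thm8}(ii), (v) and (vi) asserts that $\mathrm{cap}(\cdot,\mathcal{BV}_{\mathcal{H}_{\alpha}}(\mathbb R^d))$ is monotone, continuous along increasing unions of arbitrary subsets, and continuous along decreasing intersections of compact sets; these are precisely the three hypotheses defining a Choquet capacity on $\mathbb R^d$. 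Since every Borel subset of $\mathbb R^d$ is Suslin (hence $\mathcal{K}$-analytic), Choquet's capacitability theorem applies to yield $\mathrm{cap}(E,\mathcal{BV}_{\mathcal{H}_{\alpha}}(\mathbb R^d)) = \sup\{\mathrm{cap}(K,\mathcal{BV}_{\mathcal{H}_{\alpha}}(\mathbb R^d)) : K\subseteq E,\ K\text{ compact}\}$ for every Borel $E$. If $E$ is unbounded, one can first reduce to the bounded case by writing $E = \bigcup_n (E \cap \overline{B(0,n)})$ and exploiting continuity from below in Theorem \ref{thm8}(v), which ensures that a supremum over compact subsets of the truncations yields the full capacity.

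The main obstacle is to avoid circularity: in the paper the proof of Theorem \ref{thm8}(vi) explicitly cites the outer regularity in Corollary \ref{cor8}(i) to bound $\lim_k \mathrm{cap}(A_k)$ by $\mathrm{cap}(\bigcap_k A_k)$. Therefore part (i) of the corollary must be established \emph{independently} of Theorem \ref{thm8}(vi), which is exactly why the simple admissibility argument above, relying only on the defining class $\mathcal{A}(E,\mathcal{BV}_{\mathcal{H}_{\alpha}}(\mathbb R^d))$ and Theorem \ref{thm8}(ii), is used for (i). Once this is in place, Theorem \ref{thm8}(vi) and hence the Choquet hypotheses are fully justified, and the genuinely deep content of (ii) is outsourced to the general capacitability machinery of Choquet, so the remaining work is bookkeeping.
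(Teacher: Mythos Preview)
Your proof of (i) is identical to the paper's: monotonicity for one inequality, and for the reverse pick a near-optimal admissible $f$ and use the open set $O$ on which $f\equiv 1$.

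For (ii) the paper says only ``This follows from (v) and (vi) of Theorem \ref{thm8},'' which is precisely the Choquet-capacity route you spell out: properties (ii), (v), (vi) of Theorem \ref{thm8} are exactly the axioms of a Choquet capacity, and Choquet's capacitability theorem then gives inner regularity by compacta for all Borel (indeed Suslin) sets. So your argument is the same as the paper's, just made explicit. Your remark on the logical ordering is also apt: the paper's proof of Theorem \ref{thm8}(vi) invokes Corollary \ref{cor8}(i), so (i) must indeed be established first by the direct admissibility argument, exactly as you do.
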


\begin{proof} (i). The statement (ii) of Theorem \ref{thm8} implies
$$\mathrm{cap}(E,
\mathcal{BV}_{\mathcal{H}_{\alpha}}(\mathbb
R^d))\le\inf_{\mathrm{open}\,
O\supseteq E}\Big\{\mathrm{cap}(O,
\mathcal{BV}_{\mathcal{H}_{\alpha}}(\mathbb R^d))\Big\}.$$

To prove the reverse inequality, we may assume
$$
\mathrm{cap}(E, \mathcal{BV}_{\mathcal{H}_{\alpha}}(\mathbb
R^d))<\infty.
$$
Via Definition \ref{defi2}, for any $\varepsilon>0$,  there is $f\in
\mathcal{A}(E, \mathcal{BV}_{\mathcal{H}_{\alpha}}(\mathbb R^d))$
such that
$$ \parallel   f \parallel_{L^1}+\parallel \nabla_{\mathcal{H}_{\alpha}} f\parallel(\mathbb R^d)< \mathrm{cap}(E,
\mathcal{BV}_{\mathcal{H}_{\alpha}}(\mathbb R^d))+\varepsilon.$$
Hence, there exists an open set $O\supseteq E$ such that $f=1$ on
$O$, which implies
$$\mathrm{cap}(O,
\mathcal{BV}_{\mathcal{H}_{\alpha}}(\mathbb R^d))\le  \parallel   f
\parallel_{L^1}+\parallel \nabla_{\mathcal{H}_{\alpha}} f\parallel(\mathbb R^d)<
\mathrm{cap}(E, \mathcal{BV}_{\mathcal{H}_{\alpha}}(\mathbb
R^d))+\varepsilon.$$ Therefore,
$$\mathrm{cap}(E,
\mathcal{BV}_{\mathcal{H}_{\alpha}}(\mathbb R^d))\ge
\inf_{\mathrm{open}\, O\supseteq E}\Big\{\mathrm{cap}(O, \mathcal{BV}_{\mathcal{H}_{\alpha}}(\mathbb
R^d))\Big\}.$$

(ii). This follows from  (v) and (vi) of Theorem \ref{thm8}.
\end{proof}

In \cite{huang1}, the authors   introduced the $\alpha$-Hermite
Sobolev $p$-capacity associated with the Hermite operator
$\mathcal{H}_\alpha$ and investigated the related topics.  Following
from \cite{huang1}. we give the definition of the Sobolev
$1$-capacity.
\begin{definition}\label{def-1.1}
  Let  $E\subset \mathbb R^d$ and
  $$\mathcal{A}_1(E)=\Big\{f\in W^{1,1}_{\mathcal{H}_{\alpha}}(\mathbb R^d):\ E\subset\{x\in\mathbb R^d: f(x)\geq 1\}^\circ\Big\}.$$
  The Sobolev $1$-capacity of $E$ is defined by
  $$ Cap^{\mathcal{H}_{\alpha}}_1(E)=\inf_{f\in\mathcal{A}_1(E)}\Big\{\|f\|_{W^{1,1}_{\mathcal{H}_{\alpha}}}\Big\}.$$
 \end{definition}

\begin{proposition}For any set $E\subseteq \mathbb{R}^d$, then
$$\mathrm{cap}(E,
\mathcal{BV}_{\mathcal{H}_{\alpha}}(\mathbb R^d))\lesssim
Cap^{\mathcal{H}_{\alpha}}_1(E).$$
\end{proposition}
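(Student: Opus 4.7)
My plan is to build an admissible function for $\mathrm{cap}(E,\mathcal{BV}_{\mathcal{H}_{\alpha}}(\mathbb R^d))$ directly from any admissible function for $\mathrm{Cap}_{1}^{\mathcal{H}_{\alpha}}(E)$ by a simple pointwise truncation into $[0,1]$, and then to compare the two norms. Given $f\in\mathcal{A}_{1}(E)$, set $g:=\min\{\max\{f,0\},1\}=T(f)$, where $T(t)=\min\{\max\{t,0\},1\}$. Since $T$ is $1$-Lipschitz with $|T(t)|\le |t|$, and since $E\subseteq\{f\ge 1\}^{\circ}$, the function $g$ satisfies $0\le g\le 1$, $|g|\le|f|$, and $g\equiv 1$ on the open set $\{f\ge 1\}^{\circ}\supseteq E$. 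Thus, once I verify $g\in\mathcal{BV}_{\mathcal{H}_{\alpha}}(\mathbb R^d)$, it lies in $\mathcal{A}(E,\mathcal{BV}_{\mathcal{H}_{\alpha}}(\mathbb R^d))$.

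Next I would use the factorisation $A^{\pm}_{i,\alpha}=\partial_{x_{i}}\pm\sqrt{\alpha-1}\,x_{i}|x|^{(\alpha-2)/2}$ to decouple the ``derivative part'' and the ``multiplier part'' of the generalised gradient. From $f\in W^{1,1}_{\mathcal{H}_{\alpha}}(\mathbb R^d)$, the identities $\partial_{x_{i}}f=\tfrac{1}{2}(A^{+}_{i,\alpha}f+A^{-}_{i,\alpha}f)$ and $\sqrt{\alpha-1}\,x_{i}|x|^{(\alpha-2)/2}f=\tfrac{1}{2}(A^{+}_{i,\alpha}f-A^{-}_{i,\alpha}f)$ show that $f\in W^{1,1}(\mathbb R^d)$ in the ordinary sense and that the multiplier $x\mapsto\sqrt{\alpha-1}\,x_{i}|x|^{(\alpha-2)/2}f(x)$ is in $L^{1}(\mathbb R^d)$. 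Applying the classical chain rule for truncations of $W^{1,1}$-functions gives $|\partial_{x_{i}}g|\le|\partial_{x_{i}}f|$ a.e., and the bound $|g|\le|f|$ yields the pointwise estimate
\[
|A^{\pm}_{i,\alpha}g|\le|\partial_{x_{i}}f|+\sqrt{\alpha-1}\,|x_{i}||x|^{(\alpha-2)/2}|f|.
\]

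Integrating and substituting the two decompositions above, I obtain $\|A^{\pm}_{i,\alpha}g\|_{L^{1}}\lesssim \|A^{+}_{i,\alpha}f\|_{L^{1}}+\|A^{-}_{i,\alpha}f\|_{L^{1}}$. Summing over $i$ and the two signs, and invoking Lemma~\ref{lem1.1}(i) to identify $\|\nabla_{\mathcal{H}_{\alpha}}g\|(\mathbb R^{d})$ with $\int|\nabla_{\mathcal{H}_{\alpha}}g|\,dx$, together with $\|g\|_{L^{1}}\le\|f\|_{L^{1}}$, delivers
\[
\|g\|_{\mathcal{BV}_{\mathcal{H}_{\alpha}}(\mathbb R^d)}=\|g\|_{L^{1}}+\|\nabla_{\mathcal{H}_{\alpha}}g\|(\mathbb R^{d})\lesssim \|f\|_{W^{1,1}_{\mathcal{H}_{\alpha}}}.
\]
Since $g\in\mathcal{A}(E,\mathcal{BV}_{\mathcal{H}_{\alpha}}(\mathbb R^d))$, taking the infimum over $f\in\mathcal{A}_{1}(E)$ on the right and using the definition of $\mathrm{cap}(E,\mathcal{BV}_{\mathcal{H}_{\alpha}}(\mathbb R^d))$ on the left yields the claimed inequality.

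The only subtlety, and what I expect to be the one point to handle with care, is that $A^{\pm}_{i,\alpha}$ is \emph{not} purely a derivative, so the standard ``truncation shrinks the gradient pointwise'' fact does not apply directly. This is precisely why I keep the multiplier term separate and use the trivial pointwise bound $0\le g\le|f|$ to control it; this is the key observation that makes the chain rule argument go through in the $\alpha$-Hermite setting and reduces the whole proof to an inequality at the level of the factorised operators.
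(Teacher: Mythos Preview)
Your argument is correct and takes a genuinely different route from the paper. The paper never builds an explicit admissible $\mathcal{BV}_{\mathcal{H}_\alpha}$-function from $f$; instead, for $f\in\mathcal A_1(E)$ it invokes the coarea formula (Theorem~\ref{thm2.6}) to write $\|f\|_{W^{1,1}_{\mathcal H_\alpha}}\gtrsim\int_0^1\big(|\{f>t\}|+P_{\mathcal H_\alpha}(\{f>t\})\big)\,dt$, and then applies the geometric description of the capacity (Theorem~\ref{lemm4}) to each super-level set (which contains $E$ in its interior) to bound the integrand below by $\mathrm{cap}(E,\mathcal{BV}_{\mathcal H_\alpha})$. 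Your approach bypasses both of these tools: you truncate $f$ into $[0,1]$, use the factorisation $A^{\pm}_{i,\alpha}=\partial_{x_i}\pm\sqrt{\alpha-1}\,x_i|x|^{(\alpha-2)/2}$ to separate the derivative and multiplier parts, control the derivative part by the classical chain rule and the multiplier part by the pointwise bound $|g|\le|f|$, and then invoke only Lemma~\ref{lem1.1}(i). This is more elementary and self-contained, while the paper's proof has the advantage of exhibiting the capacity comparison as a direct consequence of the coarea/geometric machinery already developed in the section.
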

\begin{proof}For any $f\in\mathcal{A}_1(E)$, via (i) of Lemma
\ref{lem1.1}, we have
\begin{eqnarray*}\|f\|_{W^{1,1}_{\mathcal{H}_{\alpha}}}&=&\int_{\mathbb{R}^d}
|\nabla_{\mathcal{H}_{\alpha}}f(x)|dx+\int_{\mathbb{R}^d} | f(x)|dx\\
&\gtrsim&  \int^1_0|\{x\in \mathbb R^d: f(x)>t\}|+
P_{\mathcal{H}_{\alpha}}(\{x\in \mathbb
R^d: f(x)>t\})dt\\
&\gtrsim& \mathrm{cap}(E,
\mathcal{BV}_{\mathcal{H}_{\alpha}}(\mathbb R^d)),
\end{eqnarray*}
where we have used Theorem \ref{lemm4} in the last step. Hence,
Definition \ref{def-1.1} implies $$\mathrm{cap}(E,
\mathcal{BV}_{\mathcal{H}_{\alpha}}(\mathbb R^d))\lesssim
Cap^{\mathcal{H}_{\alpha}}_1(E).$$
\end{proof}

\subsection{Duality for $\alpha$-HBV capacity}\label{s4} In what follows,  we give the following lemma
 on the dual space $[\mathcal{BV}_{\mathcal{H}_{\alpha}}(\mathbb R^d)]^\ast$.
  Some similar results on various spaces have been obtained by some
  scholars in \cite{Ziemer}, \cite{xiao2} and \cite{liu}.

\begin{lemma}\label{l31} Let $\mu$ be a nonnegative Radon measure on $ \mathbb R^d$. Then the following two statements are
equivalent:

\item{\rm (i)}
$$
\Big|\int_{\mathbb R^d}f\,d\mu\Big|\lesssim
\big(\|f\|_{L^1}+\parallel \nabla_{\mathcal{H}_{\alpha}}
f\parallel({\mathbb R^d}) \big) \quad\forall\, f\in
\mathcal{BV}_{\mathcal{H}_{\alpha}}(\mathbb R^d).
$$

\item{\rm (ii)}
$$
\mu(B)\lesssim\,  {\mathrm{cap}}(B,
\mathcal{BV}_{\mathcal{H}_{\alpha}}(\mathbb R^d))
\quad\forall\,\hbox{Borel\ set}\, B\subseteq \mathbb R^d.
$$
\end{lemma}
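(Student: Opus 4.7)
The plan is to prove (i)$\Rightarrow$(ii) and (ii)$\Rightarrow$(i) separately, using the geometric description of capacity in Theorem \ref{lemm4} and the coarea formula in Theorem \ref{thm2.6} as the two main tools.

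For the direction (i)$\Rightarrow$(ii), I would first reduce to the case of an open set $O\subseteq \mathbb R^d$. Pick any admissible function $f\in\mathcal A(O,\mathcal{BV}_{\mathcal H_\alpha}(\mathbb R^d))$; since $0\le f\le 1$ and $f\equiv 1$ on a neighborhood of $O$, monotonicity gives
$$\mu(O)\le \int_{\mathbb R^d} f\, d\mu,$$
and then (i) gives $\mu(O)\lesssim \|f\|_{L^1}+\|\nabla_{\mathcal H_\alpha} f\|(\mathbb R^d)$. Taking the infimum over $f\in\mathcal A(O,\mathcal{BV}_{\mathcal H_\alpha}(\mathbb R^d))$ yields $\mu(O)\lesssim \mathrm{cap}(O,\mathcal{BV}_{\mathcal H_\alpha}(\mathbb R^d))$. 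For a general Borel set $B$, invoke Corollary \ref{cor8}(i): the outer-regularity of capacity together with the outer-regularity of the Radon measure $\mu$ promote the inequality from open sets to Borel sets.

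For the direction (ii)$\Rightarrow$(i), I would first reduce to $f\ge 0$ by replacing $f$ with $|f|$: the identity $|f|=\max\{f,-f\}$ combined with Theorem \ref{theorem2} (applied with $u=f$, $v=-f$) gives $\|\nabla_{\mathcal H_\alpha} |f|\|(\mathbb R^d)\le \|\nabla_{\mathcal H_\alpha} f\|(\mathbb R^d)$, and $\||f|\|_{L^1}=\|f\|_{L^1}$. For $f\ge 0$, the Cavalieri principle rewrites
$$\int_{\mathbb R^d} f\, d\mu=\int_0^\infty \mu(E_t)\, dt,\qquad E_t=\{x\in\mathbb R^d:f(x)>t\}.$$
Applying (ii) to each Borel set $E_t$ and then the geometric description of capacity in Theorem \ref{lemm4} gives
$$\mu(E_t)\lesssim \mathrm{cap}(E_t,\mathcal{BV}_{\mathcal H_\alpha}(\mathbb R^d))\lesssim |E_t|+P_{\mathcal H_\alpha}(E_t).$$
Integrating over $t\in(0,\infty)$, using $\int_0^\infty |E_t|\, dt=\|f\|_{L^1}$ and applying the coarea formula from Theorem \ref{thm2.6}, we get
$$\int_0^\infty P_{\mathcal H_\alpha}(E_t)\, dt\approx \|\nabla_{\mathcal H_\alpha} f\|(\mathbb R^d),$$
which combines with the bound for $\int f\, d\mu$ to yield (i).

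The main obstacle is the (i)$\Rightarrow$(ii) step for general Borel sets: one must correctly pass from the admissible-function inequality (which naturally lives on open sets because admissibility requires a neighborhood) to an inequality on arbitrary Borel sets. This is handled cleanly by coupling Corollary \ref{cor8}(i) with the outer-regularity of $\mu$, but one must ensure measurability issues do not interfere. The (ii)$\Rightarrow$(i) direction is more transparent because the coarea formula supplies an exact bridge between the level-set capacity bound and the $\mathcal{BV}_{\mathcal H_\alpha}$-norm, and the positivity reduction via Theorem \ref{theorem2} is routine.
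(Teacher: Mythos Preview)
Your overall strategy matches the paper's: both directions hinge on Theorem~\ref{lemm4} and the coarea formula Theorem~\ref{thm2.6}. Two points deserve comment.

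\textbf{(i)$\Rightarrow$(ii).} Your route is actually cleaner than the paper's. The paper tests (i) with $f=1_K$ for compact $K$, obtains $\mu(K)\lesssim |K|+P_{\mathcal H_\alpha}(K)$, upgrades this to bounded open sets with smooth boundary, invokes Theorem~\ref{lemm4}(ii) to reach $\mu(K)\lesssim\mathrm{cap}(K)$, and finally uses inner regularity (Corollary~\ref{cor8}(ii)) plus regularity of $\mu$ to pass to Borel sets. Your argument---test (i) directly with admissible functions for an open $O$ and then pass to Borel sets via Corollary~\ref{cor8}(i)---bypasses the intermediate geometric step and is perfectly valid. (A minor remark: you do not actually need outer regularity of $\mu$ here, only the trivial monotonicity $\mu(B)\le\mu(O)$.)

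\textbf{(ii)$\Rightarrow$(i).} Here there is a small but genuine gap. You write $\mathrm{cap}(E_t)\lesssim |E_t|+P_{\mathcal H_\alpha}(E_t)$ via Theorem~\ref{lemm4}, but that theorem requires the test set $A$ to satisfy $E_t\subseteq\mathrm{int}(A)$. Taking $A=E_t$ forces $E_t$ to be open, which fails for a general $f\in\mathcal{BV}_{\mathcal H_\alpha}(\mathbb R^d)$. The paper handles this by first establishing the inequality for $f\in C_c^\infty(\mathbb R^d)\cap\mathcal{BV}_{\mathcal H_\alpha}(\mathbb R^d)$ (so that $\{|f|>t\}$ is open) and then extending to all of $\mathcal{BV}_{\mathcal H_\alpha}(\mathbb R^d)$ via the approximation Theorem~\ref{propos4}. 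You should insert that reduction; once you do, your argument goes through as written. Your reduction to $f\ge 0$ via Theorem~\ref{theorem2} (with $u=f$, $v=-f$, noting $\|\nabla_{\mathcal H_\alpha}(-f)\|=\|\nabla_{\mathcal H_\alpha}f\|$) is correct and is exactly the inequality $\|\nabla_{\mathcal H_\alpha}|f|\|\le\|\nabla_{\mathcal H_\alpha}f\|$ the paper uses implicitly in its last line.
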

\begin{proof}(i)$\Rightarrow$(ii).   For any compact set $K$,   taking $f=1_K$ in (i) and via the definition of
$P_{\mathcal{H}_{\alpha}}(K)$,  we have $$\mu(K)\lesssim \big(|K|+
P_{\mathcal{H}_{\alpha}}(K)\big).$$ For all bounded open sets $O$
with smooth boundary in $\mathbb R^d$ containing $K$, via the
definition of the $\alpha$-Hermite perimeter, we have
$$P_{\mathcal{H}_{\alpha}}(\bar{O})=P_{\mathcal{H}_{\alpha}}(O) $$
due to $|\bar{O}\setminus O|=0$.  Using the assumption,  we obtain
$$ \mu(O) \le  \mu(\bar{O})  \lesssim \big(|\bar{O}|+P_{\mathcal{H}_{\alpha}}(\bar{O})\big)= |O|+
P_{\mathcal{H}_{\alpha}}(O).$$

 Via Theorem \ref{lemm4},   we have
$$ \mu(K) \lesssim \mathrm{cap}(K, \mathcal{BV}_{\mathcal{H}_{\alpha}}(\mathbb R^d)).$$
  Corollary \ref{cor8} and the regularity of
$\mu$ yield
$$ \mu(B) \lesssim \mathrm{cap}(B, \mathcal{BV}_{\mathcal{H}_{\alpha}}(\mathbb R^d)) $$
holds for any Borel set $B\subseteq \mathbb R^d$.

(ii)$\Rightarrow$(i). Suppose (ii) is true. Firstly, we claim that
$f$ is finite almost everywhere  with respect to the measure $\mu$
for $f\in \mathcal{BV}_{\mathcal{H}_{\alpha}}(\mathbb R^d)$. Indeed,
we can assume $f\in C^\infty_c(\mathbb R^d) \bigcap
\mathcal{BV}_{\mathcal{H}_{\alpha}}(\mathbb R^d)$. For $t>0$, let
$E_t=\{x\in \mathbb R^d: |f(x)|>t\}$. By the co-area formula
(\ref{eq2.8}),   we know $E_t$ has finite perimeter for a.e. $t$ and
$$\int^\infty_0 P_{\mathcal{H}_{\alpha}}(E_t)dt\approx\| \nabla_{\mathcal{H}_{\alpha}} |f|\|(\mathbb R^d)<\infty.$$
From this,  we conclude that $\lim \inf_{t\rightarrow \infty}
P_{\mathcal{H}_{\alpha}}(E_t)=0.$
 Via Theorem \ref{lemm4}, we have
$$\mathrm{cap}(\{x\in \mathbb R^d:\ |f(x)|=\infty \}, \mathcal{BV}_{\mathcal{H}_{\alpha}}(\mathbb R^d))\lesssim \lim \inf_{t\rightarrow \infty} \{|E_t|+P_{\mathcal{H}_{\alpha}}(E_t)\}=0.$$ By the
assumption,  we know $\mu(\{x\in  \mathbb R^d:\ |f(x)|=\infty \})=0.$
This completes the proof of the claim.

If $f\in C^\infty_c(\mathbb R^d)
\bigcap\mathcal{BV}_{\mathcal{H}_{\alpha}}(\mathbb R^d)$, combining
the layer-cake formula, Theorem \ref{lemm4} and the co-area formula
(\ref{eq2.8}),  we obtain
\begin{eqnarray*}
\Big|\int_{ \mathbb R^d}f\,d\mu\Big|&\le& \int_{0}^\infty\mu\big(\{x\in  \mathbb R^d:\ |f(x)|>t\}\big)\,dt\\
&\lesssim&   \int_{0}^\infty\hbox{cap}\big(\{x\in  \mathbb R^d:\ |f(x)|>t\},\mathcal{BV}_{\mathcal{H}_{\alpha}}(\mathbb R^d)\big)\,dt\\
&\lesssim&   \int_{0}^\infty\Big\{|\{x\in  \mathbb R^d:\
|f(x)|>s\}|+
P_{\mathcal{H}_{\alpha}}\big(\{x\in  \mathbb R^d:\ |f(x)|>s+\varepsilon\}\big)\Big\}\,ds\\
&\lesssim&\|f\|_{L^1}+\parallel \nabla_{\mathcal{H}_{\alpha}} |f| \parallel({\mathbb R^d})\\
&\lesssim& \|f\|_{L^1}+ \parallel
\nabla_{\mathcal{H}_{\alpha}} f
\parallel({\mathbb R^d}),
\end{eqnarray*}
and so (i) follows for all $f\in
\mathcal{BV}_{\mathcal{H}_{\alpha}}(\mathbb R^d)$ via Theorem
\ref{propos4}.
\end{proof}

\begin{theorem}
\label{t31} If $E\subseteq \mathbb R^d$ is a Borel set, then
$$
 {\mathrm{cap}}(E,
\mathcal{BV}_{\mathcal{H}_{\alpha}}(\mathbb
R^d))=\sup_{\mu\in\mathscr{M}}\mu(E),
$$
where $\mathscr{M}$ is the class of all nonnegative Radon measures
$\mu\in\big[\mathcal{BV}_{\mathcal{H}_{\alpha}}(\mathbb
R^d)\big]^\ast$ with
$\|\mu\|_{\big[\mathcal{BV}_{\mathcal{H}_{\alpha}}(\mathbb
R^d)\big]^\ast}\le 1$.
\end{theorem}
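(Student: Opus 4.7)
The plan is to prove the two inequalities separately, with the nontrivial direction handled via Hahn--Banach separation followed by Riesz representation, after first reducing to the compact case by inner regularity.

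For the easy direction $\sup_{\mu\in\mathscr{M}}\mu(E)\le \mathrm{cap}(E,\mathcal{BV}_{\mathcal{H}_{\alpha}}(\mathbb R^d))$, I would fix $\mu\in\mathscr{M}$ and any admissible $f\in\mathcal{A}(E,\mathcal{BV}_{\mathcal{H}_{\alpha}}(\mathbb R^d))$. Since $0\le f\le 1$ and $f\equiv 1$ on an open neighborhood of $E$, one has the pointwise bound $1_E\le f$, so
$$\mu(E)=\int_{\mathbb R^d}1_E\,d\mu\le \int_{\mathbb R^d}f\,d\mu\le \|\mu\|_{[\mathcal{BV}_{\mathcal{H}_{\alpha}}(\mathbb R^d)]^{\ast}}\|f\|_{\mathcal{BV}_{\mathcal{H}_{\alpha}}(\mathbb R^d)}\le \|f\|_{\mathcal{BV}_{\mathcal{H}_{\alpha}}(\mathbb R^d)}.$$
Taking the infimum over admissible $f$ and then the supremum over $\mu$ yields the inequality.

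For the reverse inequality, Corollary \ref{cor8}(ii) reduces matters to showing $\mathrm{cap}(K,\mathcal{BV}_{\mathcal{H}_{\alpha}}(\mathbb R^d))\le \sup_{\mu\in\mathscr{M}}\mu(K)$ for every compact $K$. Fix such a $K$, set $c:=\mathrm{cap}(K,\mathcal{BV}_{\mathcal{H}_{\alpha}}(\mathbb R^d))>0$ (the case $c=0$ being trivial), and introduce the enlarged admissible class
$$\mathcal{A}^{+}(K):=\big\{f\in\mathcal{BV}_{\mathcal{H}_{\alpha}}(\mathbb R^d):\ f\ge 0,\ f\ge 1\text{ on some open neighborhood of }K\big\}.$$
A truncation argument (the cutoff $\tilde f:=\min(f,1)$ lies in $\mathcal{A}(K)$ and satisfies $\|\tilde f\|_{\mathcal{BV}_{\mathcal{H}_{\alpha}}}\le \|f\|_{\mathcal{BV}_{\mathcal{H}_{\alpha}}}$, established by approximating with smooth functions via Theorem \ref{propos4} and invoking the max-min inequality of Theorem \ref{theorem2}) still gives $\inf_{\mathcal{A}^{+}(K)}\|\cdot\|_{\mathcal{BV}_{\mathcal{H}_{\alpha}}}=c$. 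The convex sets $\mathcal{A}^{+}(K)$ and the open ball $\{g:\|g\|_{\mathcal{BV}_{\mathcal{H}_{\alpha}}}<c\}$ are disjoint, so the Hahn--Banach separation theorem delivers (after rescaling) a functional $L\in [\mathcal{BV}_{\mathcal{H}_{\alpha}}(\mathbb R^d)]^{\ast}$ with $\|L\|_{\ast}\le 1$ and $L(f)\ge c$ on $\mathcal{A}^{+}(K)$. The cone property $f+th\in\mathcal{A}^{+}(K)$ for $f\in\mathcal{A}^{+}(K)$, $h\ge 0$, $t\ge 0$, combined with $L(f)+tL(h)\ge c$ for every $t\ge 0$, forces $L(h)\ge 0$ for every nonnegative $h\in\mathcal{BV}_{\mathcal{H}_{\alpha}}(\mathbb R^d)$.

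To identify $L$ with an element of $\mathscr{M}$, I restrict $L$ to $C_c^{\infty}(\mathbb R^d)\subseteq\mathcal{BV}_{\mathcal{H}_{\alpha}}(\mathbb R^d)$. For any compact $K'$ and any nonnegative $\chi\in C_c^{\infty}$ with $\chi\ge 1$ on $K'$, the inequalities $\|\phi\|_{\infty}\chi\pm\phi\ge 0$ for $\phi\in C_{K'}^{\infty}$ together with the positivity of $L$ give $|L(\phi)|\le L(\chi)\|\phi\|_{\infty}$, so $L|_{C_c^{\infty}}$ is continuous in the inductive-limit topology. Riesz representation then produces a unique nonnegative Radon measure $\mu$ with $L(\varphi)=\int\varphi\,d\mu$ for $\varphi\in C_c^{\infty}$, and the density statement of Theorem \ref{propos4} extends the identification to all of $\mathcal{BV}_{\mathcal{H}_{\alpha}}(\mathbb R^d)$, so $\mu\in\mathscr{M}$ with $\|\mu\|_{\ast}\le 1$. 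Finally, choosing $f_{n}\in\mathcal{A}^{+}(K)\cap C_c^{\infty}$ with $f_n\downarrow 1_K$ pointwise and applying monotone convergence gives $\mu(K)=\lim\int f_n\,d\mu=\lim L(f_n)\ge c$. The central obstacle is exactly this last step of realizing the abstract functional $L$ as a genuine Radon measure with matching BV dual pairing; the ingredients that make it work are the positivity forced by the cone structure of $\mathcal{A}^{+}(K)$, the local sup-norm bound via the bump $\chi$, and the smooth approximation of Theorem \ref{propos4}.
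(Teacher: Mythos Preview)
Your overall strategy is sound and genuinely different from the paper's. The paper does not use Hahn--Banach separation at all; instead it invokes a minimax theorem (cited from Adams--Hedberg, Theorem 2.4.1) applied to the bilinear pairing $(\mu,f)\mapsto\int f\,d\mu$ on the product of the set $\mathscr{X}$ of probability measures supported in $K$ (weak-$\ast$ compact) and the unit ball $\mathscr{Y}$ of $\mathcal{BV}_{\mathcal{H}_\alpha}$. The minimax yields $(\sup_{\mu\in\mathscr{M}}\mu(K))^{-1}\le(\mathrm{cap}(K))^{-1}$ in one stroke. Your Hahn--Banach plus Riesz argument is more hands-on and avoids the minimax machinery; the paper's route is shorter once the minimax theorem is granted.

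There is, however, one imprecise step in your argument that deserves care. You write that ``the density statement of Theorem~\ref{propos4} extends the identification [$L(f)=\int f\,d\mu$] to all of $\mathcal{BV}_{\mathcal{H}_{\alpha}}(\mathbb R^d)$''. Theorem~\ref{propos4} gives only \emph{strict} approximation ($L^1$ convergence together with convergence of the total variations), not norm convergence, and $C_c^\infty$ is not norm-dense in $\mathcal{BV}_{\mathcal{H}_\alpha}$; so continuity of $L$ does not directly pass $L(f_h)\to L(f)$. Fortunately you do not actually need $L=\int\cdot\,d\mu$ on all of $\mathcal{BV}_{\mathcal{H}_\alpha}$. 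What you need is (a) $\|\mu\|_{[\mathcal{BV}_{\mathcal{H}_\alpha}]^\ast}\le 1$ and (b) $\mu(K)\ge c$, and both follow without that claim. For (a): given $f\ge 0$ in $\mathcal{BV}_{\mathcal{H}_\alpha}$, Theorem~\ref{propos4} produces nonnegative $f_h\in C_c^\infty$ with $f_h\to f$ in $L^1$ (hence a.e.\ along a subsequence) and $\|f_h\|_{\mathcal{BV}_{\mathcal{H}_\alpha}}\to\|f\|_{\mathcal{BV}_{\mathcal{H}_\alpha}}$; Fatou then gives $\int f\,d\mu\le\liminf\int f_h\,d\mu=\liminf L(f_h)\le\|f\|_{\mathcal{BV}_{\mathcal{H}_\alpha}}$. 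For general $f$ use $|\int f\,d\mu|\le\int|f|\,d\mu$ and the inequality $\||f|\|_{\mathcal{BV}_{\mathcal{H}_\alpha}}\le\|f\|_{\mathcal{BV}_{\mathcal{H}_\alpha}}$ (which follows from the pointwise identity $|\nabla_{\mathcal{H}_\alpha}|g||^2=2|\nabla g|^2+2(\alpha-1)|x|^\alpha g^2=|\nabla_{\mathcal{H}_\alpha}g|^2$ for smooth $g$, together with approximation and lower semicontinuity). For (b) your dominated-convergence argument with the bump functions $f_n$ is fine as written, since $f_n\in C_c^\infty$ ensures $L(f_n)=\int f_n\,d\mu$ without any extension issue. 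With this adjustment the proof goes through.
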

\begin{proof}
In terms of Corollary \ref{cor8} (ii), we only need to verify that
  the formula holds for any compact set $K\subseteq \mathbb R^d$. Given a
compact set $K\subseteq \mathbb R^d$. Suppose:

(i) $\mathscr{X}$ is the set of all nonnegative Radon measures $\mu$
with support being contained in $K$ and $\mu( \mathbb R^d)=1$;

(ii) $\mathscr{Y}$ is the class of
$\mathcal{BV}_{\mathcal{H}_{\alpha}}(\mathbb R^d)$ functions $f$
with $$\|f\|_{L^1}+\| \nabla_{\mathcal{H}_{\alpha}} f\|({\mathbb
R^d})\le 1.$$

Then, $\mathscr{X}$ and $\mathscr{Y}$ are convex, $\mathscr{X}$ is
compact in the weak-star topology, and $\mu\mapsto\int_{ \mathbb
R^d}f\,d\mu$ is lower semicontinuous on $\mathscr{X}$ for each given
$f\in \mathscr{Y}$.

Note that if $f\in
\mathcal{A}(K,\mathcal{BV}_{\mathcal{H}_{\alpha}}(\mathbb R^d))$ and
$\mu\in \mathscr{M}$,  then
$$
\mu(K)\le\int_{ \mathbb R^d}f\,d\mu\le\Big(
\|f\|_{L^1}+\nabla_{\mathcal{H}_{\alpha}} f\|({\mathbb
R^d})\Big)\|\mu\|_{\big[\mathcal{BV}_{\mathcal{H}_{\alpha}}(\mathbb
R^d)\big]^\ast}\le \|f\|_{L^1}+\| \nabla_{\mathcal{H}_{\alpha}}
f\|({\mathbb R^d}).
$$
Hence,
$$
\mu(K)\le \hbox{cap}(K,\mathcal{BV}_{\mathcal{H}_{\alpha}}(\mathbb
R^d))
$$
 implies
$$
\hbox{cap}(K,\mathcal{BV}_{\mathcal{H}_{\alpha}}(\mathbb
R^d))\ge\sup_{\mu\in\mathscr{M}}\mu(K).
$$
To verify the reverse inequality, we observe two facts below. At
first, via
$$
\sup_{f\in \mathscr{Y}}\int_{ \mathbb
R^d}f\,d\mu=\|\mu\|_{\big[\mathcal{BV}_{\mathcal{H}_{\alpha}}(\mathbb
R^d)\big]^\ast},
$$
 we conclude that
$$
\inf_{\mu\in \mathscr{X}}\sup_{f\in \mathscr{Y}}\int_{ \mathbb
R^d}f\,d\mu=\inf_{\mu\in
\mathscr{X}}\|\mu\|_{\big[\mathcal{BV}_{\mathcal{H}_{\alpha}}(\mathbb
R^d)\big]^\ast}\ge\inf_{\mu\in\mathscr{M}} \big({\mu(K)}\big)^{-1}.
$$
Secondly,
$$
\inf_{\mu\in \mathscr  X}\int_{\mathbb R^d}f\,d\mu=\inf_{x\in
K}f(x)
$$
 implies
$$
\sup_{f\in \mathscr{Y}}\inf_{\mu\in \mathscr X}\int_{ \mathbb
R^d}f\,d\mu=\sup_{f\in \mathscr{Y}}\inf_{x\in
K}f(x)\le\Big(\hbox{cap}(K,\mathcal{BV}_{\mathcal{H}_{\alpha}}(\mathbb
R^d))\Big)^{-1}.
$$
Now, using  \cite[Theorem 2.4.1]{Admas},  we have
$$
\big({\sup_{\mu\in\mathscr{M}}\mu(K)}\big)^{-1}=\inf_{\mu\in\mathscr{M}}
\big({\mu(K)}\big)^{-1}\le\big({\hbox{cap}(K,\mathcal{BV}_{\mathcal{H}_{\alpha}}(\mathbb
R^d))}\big)^{-1},
$$
which completes the proof of the theorem.
\end{proof}

\subsection{Trace and $\alpha$-HBV isocapacity inequality}\label{s5}

Similar to  \cite[Theorem 10]{xiao2}, we  obtain the
trace/restriction theorem arising from
$\mathcal{BV}_{\mathcal{H}_{\alpha}}(\mathbb R^d)$.

\begin{theorem}\label{thm18} Given $1\le p<\infty$ and a nonnegative
Radon measure $\mu$ on $\mathbb R^d$. The following three statements
are equivalent:
\item{{\rm(i)}}  For any $f\in \mathcal{BV}_{\mathcal{H}_{\alpha}}(\mathbb R^d)$,
$$\Big(\int_{\mathbb R^d}|f|^p d\mu\Big)^{{1}/{p}}\lesssim
 \|f\|_{L^1}+\parallel \nabla_{\mathcal{H}_{\alpha}} f
\parallel(\mathbb R^d) .$$

\item{{\rm(ii)}} For any Borel set $B\subseteq \mathbb R^d$,
$$\mu(B)^{{1}/{p}}\lesssim \Big(|B|+ P_{\mathcal{H}_{\alpha}}(B)\Big).$$

\item{{\rm(iii)}} For any Borel set $B\subseteq \mathbb R^d$,
$$\mu(B)^{{1}/{p}}\lesssim   \mathrm{cap}(B, \mathcal{BV}_{\mathcal{H}_{\alpha}}(\mathbb
R^d)).$$
\end{theorem}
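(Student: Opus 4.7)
The plan is to establish the equivalence via the cyclic chain (i) $\Rightarrow$ (ii) $\Rightarrow$ (iii) $\Rightarrow$ (i); the first two implications are short and rest on the geometric description of the $\alpha$-HBV capacity from Theorem~\ref{lemm4}, while the main work lies in the last step. For (i) $\Rightarrow$ (ii), I would test (i) against $f=1_B$ for a Borel set $B$ with $|B|+P_{\mathcal{H}_\alpha}(B)<\infty$ (the complementary case is trivial); since $\|1_B\|_{L^1}=|B|$ and $\|\nabla_{\mathcal{H}_\alpha}1_B\|(\mathbb R^d)=P_{\mathcal{H}_\alpha}(B)$ by the definition of the perimeter, (ii) follows at once. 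For (ii) $\Rightarrow$ (iii), I first treat a compact set $K$: for any $A\subseteq\mathbb R^d$ with $K\subseteq\mathrm{int}(A)$ and $|A|+P_{\mathcal{H}_\alpha}(A)<\infty$, monotonicity of the Radon measure $\mu$ and (ii) applied to $A$ give $\mu(K)^{1/p}\leq\mu(A)^{1/p}\lesssim|A|+P_{\mathcal{H}_\alpha}(A)$; infimizing over $A$ and invoking Theorem~\ref{lemm4}(i) yields $\mu(K)^{1/p}\lesssim\mathrm{cap}(K,\mathcal{BV}_{\mathcal{H}_\alpha}(\mathbb R^d))$. The extension to general Borel sets then follows from the inner regularity of $\mu$ combined with Corollary~\ref{cor8}(ii).

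For (iii) $\Rightarrow$ (i), I would adapt the Maz'ya-style layer-cake argument. By Theorem~\ref{propos4} it suffices to treat $f\in C_c^\infty(\mathbb R^d)$; set $E_t:=\{x\in\mathbb R^d:|f(x)|>t\}$. The layer-cake formula, the dyadic decomposition of $t\in[2^k,2^{k+1})$, and (iii) combine to give
\[
\int_{\mathbb R^d}|f|^p\,d\mu\lesssim\sum_{k\in\mathbb Z}2^{kp}\mu(E_{2^k})\lesssim\sum_{k\in\mathbb Z}\bigl(2^k\,\mathrm{cap}(E_{2^k},\mathcal{BV}_{\mathcal{H}_\alpha}(\mathbb R^d))\bigr)^p.
\]
Since $\sum_k a_k^p\leq(\sum_k a_k)^p$ for $a_k\geq 0$ and $p\geq 1$, the problem reduces to proving
\[
\sum_{k\in\mathbb Z}2^k\,\mathrm{cap}(E_{2^k},\mathcal{BV}_{\mathcal{H}_\alpha}(\mathbb R^d))\lesssim\|f\|_{L^1}+\|\nabla_{\mathcal{H}_\alpha}f\|(\mathbb R^d).
\]
To this end I introduce the dyadic truncations $f_k:=(|f|-2^{k-1})_+\wedge 2^{k-1}$: after a standard open-neighborhood adjustment justified by Corollary~\ref{cor8}, the rescaled function $f_k/2^{k-1}$ is admissible for $\mathrm{cap}(E_{2^k},\cdot)$, so that $2^{k-1}\mathrm{cap}(E_{2^k})\lesssim\|f_k\|_{L^1}+\|\nabla_{\mathcal{H}_\alpha}f_k\|(\mathbb R^d)$.

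The main obstacle is then to sum these bounds, i.e. to control $\sum_k\|f_k\|_{\mathcal{BV}_{\mathcal{H}_\alpha}}$ by a constant multiple of $\|f\|_{\mathcal{BV}_{\mathcal{H}_\alpha}}$; this is subtler than in the classical BV case because $\nabla_{\mathcal{H}_\alpha}$ carries a zeroth-order multiplication piece, so truncation of values does not a priori reduce the $\alpha$-Hermite variation. The pointwise identity $\sum_k f_k=|f|$ immediately gives $\sum_k\|f_k\|_{L^1}=\|f\|_{L^1}$. For the gradient part, I would exploit the identity $|\nabla_{\mathcal{H}_\alpha}g|^2=2|\nabla g|^2+2(\alpha-1)|x|^\alpha g^2$ coming from the factorization of $\mathcal{H}_\alpha$; combined with $\sqrt{a^2+b^2}\leq|a|+|b|$, the pairwise disjointness of the supports $\{2^{k-1}<|f|<2^k\}$ of $\nabla f_k$, and the nonnegativity $f_k\geq 0$ with $\sum_k f_k=|f|$, this yields
\[
\sum_k|\nabla_{\mathcal{H}_\alpha}f_k|\leq\sqrt{2}\,|\nabla f|+\sqrt{2(\alpha-1)}\,|x|^{\alpha/2}|f|\lesssim|\nabla_{\mathcal{H}_\alpha}f|,
\]
where the last step uses the pointwise lower bounds on $|\nabla_{\mathcal{H}_\alpha}f|$ encoded in the same factorization together with~(\ref{eq3.11}). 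Integrating closes the cyclic chain.
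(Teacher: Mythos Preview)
Your cyclic chain is correct, and your treatment of (i)$\Rightarrow$(ii) and (ii)$\Rightarrow$(iii) matches the paper's essentially verbatim. For (iii)$\Rightarrow$(i), however, you take a genuinely different route from the paper. The paper does not use dyadic truncations at all: instead, it first proves the pointwise inequality
\[
\Big(\int_0^\infty \mu(E_t)\,dt^p\Big)^{1/p}\le \int_0^\infty \big(\mu(E_t)\big)^{1/p}\,dt
\]
for the decreasing function $t\mapsto\mu(E_t)$ via a differentiation-under-the-integral trick, then applies (iii) to get $\int_0^\infty \mathrm{cap}(E_t,\mathcal{BV}_{\mathcal{H}_\alpha}(\mathbb R^d))\,dt$, bounds this by $\int_0^\infty\big(|E_t|+P_{\mathcal{H}_\alpha}(E_t)\big)\,dt$ using the geometric description of capacity in Theorem~\ref{lemm4}, and finishes with Cavalieri's principle and the co-area formula (Theorem~\ref{thm2.6}). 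Your argument instead bounds the capacity directly from the definition via the admissible functions $f_k/2^{k-1}$, and then sums by exploiting the factorization identity $|\nabla_{\mathcal{H}_\alpha}g|^2=2|\nabla g|^2+2(\alpha-1)|x|^{\alpha}g^2$ together with the disjointness of the supports of $\nabla f_k$. The paper's path is shorter here precisely because it recycles the heavy machinery (Theorems~\ref{lemm4} and~\ref{thm2.6}) already established, while your path is self-contained at this step and makes explicit how the zeroth-order multiplication term in $\nabla_{\mathcal{H}_\alpha}$ is controlled under truncation; this is closer in spirit to the classical Maz'ya capacitary argument and would transfer more readily to settings where a co-area formula is not available.
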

\begin{proof}
(i)$\Rightarrow$(ii). By taking $f=1_B$ and the definition of
$P_{\mathcal{H}_{\alpha}}(\cdot)$, we can deduce that (ii) is valid.

(ii)$\Rightarrow$(iii). For all bounded open sets
$O\subseteq \mathbb R^d$ with smooth boundary containing
$B$ which is a compact subset, using the assumption  we obtain
$$(\mu(O))^{1/p}\le (\mu(\bar{O}))^{1/p}\lesssim \Big( |\bar{O}|+P_{\mathcal{H}_{\alpha}}(\bar{O})\Big)=  \Big(|{O}|+ P_{\mathcal{H}_{\alpha}}(O)\Big).$$
 Theorem \ref{lemm4}  implies
$$(\mu(B))^{1/p}\lesssim  \mathrm{cap}(B, \mathcal{BV}_{\mathcal{H}_{\alpha}}(\mathbb R^d))\approx   \inf_{\mathfrak{g}\supseteq B}\Big\{P_{\mathcal{H}_{\alpha}}(O)+|O|\Big\}.$$
Then following from (ii) of Corollary \ref{cor8} and the inner
regularity of $\mu$,   we conclude that  (iii) is true.

(iii)$\Rightarrow$(i). Suppose (iii) holds. If $f\in
C^\infty_0(\mathbb R^d)\bigcap
\mathcal{BV}_{\mathcal{H}_{\alpha}}(\mathbb R^d)$, similar to the
proof of Lemma \ref{l31},  we know that $f$ is finite a.e.   with
respect to the measure $\mu$ for $f\in C^\infty_c(\mathbb
R^d)\bigcap \mathcal{BV}_{\mathcal{H}_{\alpha}}(\mathbb R^d)$.
Combining with the layer-cake formula, Theorem \ref{lemm4} and the
co-area formula (\ref{eq2.8}),  we conclude that
\begin{eqnarray*}
\Big(\int_{\mathbb R^d}
|f|^pd\mu\Big)^{1/p}&\le& \Big(\int^\infty_0 \mu(\{x\in
\mathbb R^d: |f(x)|>t\})dt^p\Big)^{1/p}\\
&\le& \int^\infty_0 \frac{d}{dt}\Big(\int^t_0 \mu(\{x\in  \mathbb
R^d:
|f(x)|>s\})ds^p\Big)^{1/p}dt\\
&=&  \int^\infty_0  \Big(\int^t_0 \mu(\{x\in  \mathbb R^d:
|f(x)|>s\})ds^p\Big)^{1/p-1}\mu(\{x\in  \mathbb R^d:
|f(x)|>t\})t^{\alpha-1}dt\\
&\le& \int^\infty_0  \Big(  \mu(\{x\in  \mathbb R^d: |f(x)|>t\})
\Big)^{1/p}dt\\
&\lesssim&   \int^\infty_0 \mathrm{cap}(\{x\in  \mathbb R^d:
|f(x)|>t\},
\mathcal{BV}_{\mathcal{H}_{\alpha}}(\mathbb R^d))dt\\
 &\lesssim&   \int_{0}^\infty \Big[\Big|\Big\{x\in  \mathbb R^d:\
 |f(x)|>s\Big\}\Big|+
P_{\mathcal{H}_{\alpha}}\big(\{x\in  \mathbb R^d:\ |f(x)|>s\}\big)\Big]\,ds\\
&\lesssim&   \|f\|_{L^1}+\parallel \nabla_{\mathcal{H}_{\alpha}} |f|\parallel(\mathbb R^d) \\
&\lesssim&    \|f\|_{L^1}+ \parallel
\nabla_{\mathcal{H}_{\alpha}} f
\parallel(\mathbb R^d) .
\end{eqnarray*}
Hence, combining  Theorem \ref{propos4} with the above proofs we
know that (i) is true.
\end{proof}

If  $\mu$ in the above theorem is taken as the Lebesgue measure, we
can obtain the the following  imbedding result for the
$\alpha$-Hermite case.

\begin{theorem}\label{thm1}
\item{{\rm (i)}} For any $f\in L^{{d}/{(d-1)}}(\mathbb R^d)$   with compact support, the analytic inequality
\begin{equation}\label{eq5}
\parallel f\parallel_{ {d}/{(d-1)}}\lesssim\Big(\int^{\infty}_0
\big(\mathrm{cap}(\{x\in  \mathbb R^d: |f(x)|\ge t\},
\mathcal{BV}_{\mathcal{H}_{\alpha}}(\mathbb
R^d))\big)^{{{d}/{(d-1)}}}dt^{{{d}/{(d-1)}}}\Big)^{{{(d-1)}/{d}}}
\end{equation}
 is equivalent to  the
geometric inequality
\begin{equation}\label{eqq6}
|M|^{{{(d-1)}/{d}}}\lesssim \mathrm{cap}(M, \mathcal{BV}_{\mathcal{H}_{\alpha}}(\mathbb R^d))
\end{equation}
for any compact set $M$ in $ \mathbb R^d$. Moreover, the
inequalities $\mathrm{(\ref{eq5})}$ and $\mathrm{(\ref{eqq6})}$
 are true.

\item{{\rm (ii)}}\label{thm2}  For any $f\in C^{1}_c(\mathbb R^d)$, the analytic inequality
\begin{eqnarray}\label{eq7}
&&\Big(\int^{\infty}_0\big(\mathrm{cap}(\{x\in  \mathbb R^d: |f(x)|\ge
t\}, \mathcal{BV}_{\mathcal{H}_{\alpha}}(\mathbb
R^d))\big)^{{{d}/{(d-1)}}}dt^{{{d}/{(d-1)}}}\Big)^{1-1/d}\\
&&\leq
\int_{\mathbb R^d}|  f(x)|dx+ \int_{\mathbb
R^d}|\nabla_{\mathcal{H}_{\alpha}} f(x)|dx\nonumber
\end{eqnarray}
 is equivalent to  the
geometric inequality
\begin{equation}\label{eq8}
\mathrm{cap}(M, \mathcal{BV}_{\mathcal{H}_{\alpha}}(\mathbb R^d))\le
|M|+ P_{\mathcal{H}_{\alpha}}(M)
\end{equation}
for any connected compact set $M$ in $ \mathbb R^d$ with smooth
boundary. Moreover, the inequalities $\mathrm{(\ref{eq7})}$ and
$\mathrm{(\ref{eq8})}$ are true.

\end{theorem}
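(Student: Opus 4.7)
For Part (i), the plan is to leverage the layer--cake identity
\[
\|f\|_{d/(d-1)}^{d/(d-1)}=\int_{0}^{\infty}|\{|f|>t\}|\,dt^{d/(d-1)}.
\]
Given (\ref{eqq6}) applied to each superlevel set, one has $|\{|f|\geq t\}|^{(d-1)/d}\lesssim \mathrm{cap}(\{|f|\geq t\},\mathcal{BV}_{\mathcal{H}_{\alpha}}(\mathbb R^{d}))$; raising to the $d/(d-1)$ power and inserting into layer--cake yields (\ref{eq5}). Conversely, testing (\ref{eq5}) on $f=1_M$ for compact $M$ collapses the inner integral to $\mathrm{cap}(M,\mathcal{BV}_{\mathcal{H}_{\alpha}}(\mathbb R^{d}))^{d/(d-1)}$, forcing (\ref{eqq6}). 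To verify (\ref{eqq6}) unconditionally, take any $f\in \mathcal{A}(M,\mathcal{BV}_{\mathcal{H}_{\alpha}}(\mathbb R^{d}))$: since $f\equiv 1$ on a neighborhood of $M$, $|M|^{(d-1)/d}\leq \|f\|_{d/(d-1)}\lesssim \|\nabla_{\mathcal{H}_{\alpha}}f\|(\mathbb R^{d})$ by the Sobolev inequality of Theorem \ref{thm2.7}(i), and taking the infimum over $\mathcal{A}(M,\mathcal{BV}_{\mathcal{H}_{\alpha}}(\mathbb R^{d}))$ produces (\ref{eqq6}).

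Part (ii) is more delicate because the exponent $d/(d-1)$ sits inside the outer integral. For the direction (\ref{eq8})$\Rightarrow$(\ref{eq7}) I would first prove the Hardy--type rearrangement estimate
\[
\Big(\int_{0}^{\infty}g(t)^{d/(d-1)}\,dt^{d/(d-1)}\Big)^{(d-1)/d}\lesssim \int_{0}^{\infty}g(t)\,dt
\]
for any nonincreasing $g\geq 0$. This follows by passing via Cavalieri to $\int_{0}^{\infty}g^{p}dt^{p}=\int_{0}^{\infty}p\tau^{p-1}\lambda(\tau)^{p}d\tau$ with $p=d/(d-1)$ and $\lambda(\tau)=|\{g>\tau\}|$, and then using $\tau\lambda(\tau)\leq \int_{0}^{\tau}\lambda\leq \|\lambda\|_{L^{1}}$ (valid since $\lambda$ is nonincreasing) to absorb the $p-1$ surplus powers of $\lambda$, leaving one factor $\|\lambda\|_{L^{1}}=\int g\,dt$. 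Since $t\mapsto \mathrm{cap}(\{|f|\geq t\},\mathcal{BV}_{\mathcal{H}_{\alpha}}(\mathbb R^{d}))$ is nonincreasing, applying this reduces (\ref{eq7}) to
\[
\int_{0}^{\infty}\mathrm{cap}(\{|f|\geq t\},\mathcal{BV}_{\mathcal{H}_{\alpha}}(\mathbb R^{d}))\,dt\lesssim \|f\|_{L^{1}}+\|\nabla_{\mathcal{H}_{\alpha}}f\|(\mathbb R^{d}).
\]
By Sard's theorem, for a.e. $t$ the set $\{|f|\geq t\}$ is a compact set with smooth boundary; decomposing it into its finitely many connected components, applying (\ref{eq8}) to each, and combining with subadditivity of $\mathrm{cap}(\cdot,\mathcal{BV}_{\mathcal{H}_{\alpha}})$ (Theorem \ref{thm8}(iv)) and additivity of $P_{\mathcal{H}_{\alpha}}$ on disjoint components gives $\mathrm{cap}(\{|f|\geq t\})\leq |\{|f|\geq t\}|+P_{\mathcal{H}_{\alpha}}(\{|f|\geq t\})$ pointwise in $t$. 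Ordinary layer--cake for $\|f\|_{L^{1}}$ and the coarea formula of Theorem \ref{thm2.6} for $\|\nabla_{\mathcal{H}_{\alpha}}f\|(\mathbb R^{d})$ then close the estimate.

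For the reverse (\ref{eq7})$\Rightarrow$(\ref{eq8}), given a connected compact $M$ with smooth boundary I would construct $f_{\varepsilon}\in C_{c}^{1}(\mathbb R^{d})$ with $0\leq f_{\varepsilon}\leq 1$, $f_{\varepsilon}\equiv 1$ on $M$, supported in the $\varepsilon$-neighborhood $M_{\varepsilon}$, and
\[
\|f_{\varepsilon}\|_{L^{1}}+\|\nabla_{\mathcal{H}_{\alpha}}f_{\varepsilon}\|(\mathbb R^{d})\longrightarrow |M|+P_{\mathcal{H}_{\alpha}}(M)\quad (\varepsilon\to 0);
\]
this is a mollification construction in the spirit of Theorem \ref{propos4}, and the compact location of $M$ keeps the weight $|x|^{\alpha/2}$ in $\nabla_{\mathcal{H}_{\alpha}}$ harmless. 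Because $\{|f_{\varepsilon}|\geq t\}\supset M$ for every $t\in(0,1]$, monotonicity of the capacity (Theorem \ref{thm8}(ii)) forces the left side of (\ref{eq7}) evaluated at $f_{\varepsilon}$ to dominate $\mathrm{cap}(M,\mathcal{BV}_{\mathcal{H}_{\alpha}}(\mathbb R^{d}))$; applying (\ref{eq7}) and sending $\varepsilon\to 0$ on the right then yields (\ref{eq8}). To verify (\ref{eq8}) unconditionally, apply Theorem \ref{lemm4}(ii) with $A=M_{\varepsilon}$ and let $\varepsilon\to 0$, using the lower semicontinuity of the $\alpha$-Hermite perimeter (Corollary \ref{lem1.1-1}) to control the convergence $P_{\mathcal{H}_{\alpha}}(M_{\varepsilon})\to P_{\mathcal{H}_{\alpha}}(M)$. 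The main obstacle of the whole scheme is the Hardy--type rearrangement inequality in the second paragraph, both in identifying it as the correct bridge and in recognizing that it is the nonincreasing character of the capacity of superlevel sets that makes it directly applicable; once it is in place the four implications chain together mechanically.
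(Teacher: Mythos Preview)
Your proposal is essentially the same approach as the paper's proof. The paper also tests on $f=1_M$ for one direction of (i), uses layer--cake for the other, and for (ii) exploits exactly the Hardy--type inequality you isolate: the paper writes it as the pointwise bound
\[
t^{1/(d-1)}\,\mathrm{cap}(\Omega_t(f))^{d/(d-1)}\le (1-1/d)\,\frac{d}{dt}\Big(\int_0^t \mathrm{cap}(\Omega_r(f))\,dr\Big)^{d/(d-1)},
\]
valid because $t\mapsto\mathrm{cap}(\Omega_t(f))$ is nonincreasing, and integrates in $t$. For the direction (\ref{eq7})$\Rightarrow$(\ref{eq8}) the paper uses the explicit Lipschitz cutoff $f_\delta(x)=(1-\delta^{-1}\mathrm{dist}(x,M))_+$ rather than a mollification, but the mechanism is identical to yours.

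Two small points. First, your unconditional verification of (\ref{eqq6}) via the Sobolev inequality on admissible functions is a clean alternative to the paper's route through the isoperimetric inequality and Theorem~\ref{lemm4}; both are valid. Second, your last step---invoking lower semicontinuity (Corollary~\ref{lem1.1-1}) to get $P_{\mathcal{H}_\alpha}(M_\varepsilon)\to P_{\mathcal{H}_\alpha}(M)$---goes the wrong way: lower semicontinuity only gives $\liminf_\varepsilon P_{\mathcal{H}_\alpha}(M_\varepsilon)\ge P_{\mathcal{H}_\alpha}(M)$, whereas you need the reverse inequality to bound $\mathrm{cap}(M)$ from above. The paper handles this by an explicit estimate showing $P_{\mathcal{H}_\alpha}(M_\delta\setminus M)\to 0$ via the classical perimeter plus a weighted volume term, both of which vanish as $\delta\to 0$ on the thin shell; you would need to insert a similar argument.
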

\begin{proof} We adopt the method in \cite{xiao1} to give the proof.
In what follows,  we always adopt two short notations:
$$\Omega_t(f)=\{x\in  \mathbb R^d:\ |f(x)|\ge t\}$$
and $$\partial\Omega_t(f)=\{x\in  \mathbb R^d:\ |f(x)|=t\}$$ for a
function $f$ defined on $ \mathbb R^d$ and a number $t>0$.

(i). Given a compact set $M\subseteq \mathbb R^d,$ let $f=1_M$. Then
$\parallel f\parallel_{{d}/{(d-1)}}=|M|^{1-1/d}$
and $$ \Omega_t(f)=\left\{\begin{array}{cc} M, \hspace{0.4cm}
\hbox{if} \hspace{0.2cm}
t\in (0,1],\\
\ \ \emptyset,\hspace{0.4cm}\   \hbox{if}\hspace{0.2cm} t\in
(1,\infty).
\end {array}\right. $$
Hence,
\begin{eqnarray*}
&&\int^\infty_0 \Big[ \mathrm{cap}(\Omega_t(f), \mathcal{BV}_{\mathcal{H}_{\alpha}}(\mathbb R^d))\Big]^{{d}/{(d-1)}}dt^{{d}/{(d-1)}}\\
&&\quad =\int^1_0 ( \mathrm{cap}(\Omega_t(f),
\mathcal{BV}_{\mathcal{H}_{\alpha}}(\mathbb
R^d)))^{{d}/{(d-1)}}dt^{{d}/{(d-1)}}+\int^\infty_1(
\mathrm{cap} (\Omega_t(f)),
\mathcal{BV}_{\mathcal{H}_{\alpha}}(\mathbb R^d))^{{d}/{(d-1)}}dt^{{d}/{(d-1)}}\\
&&\quad=( \mathrm{cap}(M,
\mathcal{BV}_{\mathcal{H}_{\alpha}}(\mathbb
R^d)))^{{d}/{(d-1)}},
\end{eqnarray*}
which derives that  (\ref{eq5}) implies (\ref{eqq6}).

Conversely,  we show that (\ref{eqq6}) implies (\ref{eq5}). Suppose
(\ref{eqq6}) holds for any compact set in $ \mathbb R^d$. For $t>0$
and $f$, an $L^{{d}/{(d-1)}}$ integrable   function with compact
support in $ \mathbb R^d$, we use the
  inequality (\ref{eqq6}) to get
$$\Big\|
f\Big\|^{{d}/{(d-1)}}_{{d}/{(d-1)}}=\int^{\infty}_0|\Omega_t(f)|dt^{{d}/{(d-1)}}\lesssim
 \int^{\infty}_0\big( \mathrm{cap}(\Omega_t(f),
\mathcal{BV}_{\mathcal{H}_{\alpha}}(\mathbb
R^d))\big)^{{d}/{(d-1)}}dt^{{d}/{(d-1)}}.$$

Since (\ref{eq5}) is equivalent to (\ref{eqq6}), it suffices to
prove that (\ref{eqq6}) is valid. In fact, for any bounded set $B$
with smooth boundary  containing $M$,  using (ii) of Theorem
\ref{thm2.7}, we have
$$|M|^{1-1/d}\le |B|^{1-1/d}\lesssim P_{\mathcal{H}_{\alpha}}(B)\le\big(|B|+P_{\mathcal{H}_{\alpha}}(B)\big).$$
Theorem \ref{lemm4} implies that  (\ref{eqq6}) holds true.

(ii) For   a connected compact set $M\subseteq \mathbb R^d$ with
smooth boundary, let $R>0$ be such that $M\subseteq B(0,R)$.
 Choose $\delta>0$ such that
$2\delta<{\mathrm{dist}_{\mathbb R^d}({M},\partial B(0,R))}$, where
$\mathrm{dist}_{\mathbb R^d}(M,\partial
 B(0,R))$ represents the Euclidean  distance from ${M}$ to
$ B(0,R)$.

Define the Lipschitz function $$
f_\delta(x)=\left\{\begin{array}{cc}
1-\delta^{-1}{\mathrm{dist}_{\mathbb R^d}
(x,{M})},& \hspace{0.4cm} \hbox{if} \hspace{0.2cm}
 \mathrm{dist}_{\mathbb R^d}(x,{M})<\delta;\\
  0,&\hspace{0.4cm}\   \hbox{if}\hspace{0.2cm} \mathrm{dist}_{\mathbb R^d}(x,{M})\ge\delta.
 \end {array}
 \right. $$

Let $A_\delta$ be the intersection of $ B(0,R)$ with a tubular
neighborhood of $M$ of radius $\delta$. If (\ref{eq7}) holds, then
due to $M\subseteq\Omega_t(f_\delta)$ for $t\in [0,1]$,
$$ \mathrm{cap}(M, \mathcal{BV}_{\mathcal{H}_{\alpha}}(\mathbb R^d))\le \Big(\int^{1}_0
\big( \mathrm{cap}(\Omega_t(f_\delta),
\mathcal{BV}_{\mathcal{H}_{\alpha}}( \mathbb
R^d))\big)^{{d}/{(d-1)}}dt^{{d}/{(d-1)}}\Big)^{1-1/d}\le
\parallel
 f_\delta\parallel_{L^1}+
\parallel
 \nabla_{\mathcal{H}_{\alpha}}f_\delta\parallel_{L^1}.$$

 Via the coarea
 formula   and (i) in Lemma \ref{lem1.1}, we have
\begin{eqnarray*}\parallel \nabla_{\mathcal{H}_{\alpha}}f_\delta\parallel_{L^1}
&\approx&\int^1_0 P_{\mathcal{H}_{\alpha}}(\{x\in \mathbb{R}^d:
f_\delta(x)>t
\})dt\\
&=&\int^1_0 P_{\mathcal{H}_{\alpha}}(\{x\in \mathbb{R}^d:
\mathrm{dist}_{\mathbb
R^d}(x,{M})<\delta(1-t) \})dt\\
&=&\int^1_0 \big(P_{\mathcal{H}_{\alpha}}(\{x\in \mathbb{R}^d:
\mathrm{dist}_{\mathbb R^d}(x,{M})<\delta(1-t)
\})-P_{\mathcal{H}_{\alpha}}(M)\big)dt+P_{\mathcal{H}_{\alpha}}(M).
\end{eqnarray*} Next, we deal with the following integral
$$\int^1_0
\big(P_{\mathcal{H}_{\alpha}}(\{x\in \mathbb{R}^d:
\mathrm{dist}_{\mathbb R^d}(x,{M})<\delta(1-t)
\})-P_{\mathcal{H}_{\alpha}}(M)\big)dt.$$

By Lemma \ref{lem3.6}, we have
\begin{eqnarray*}&&\int^1_0 \big(P_{\mathcal{H}_{\alpha}}(\{x\in \mathbb{R}^d:
\mathrm{dist}_{\mathbb R^d}(x,{M})<\delta(1-t) \})-P_{\mathcal{H}_{\alpha}}(M)\big)dt\\
&&\le \int^1_0 \big(P_{\mathcal{H}_{\alpha}}(\{x\in \mathbb{R}^d:
0<\mathrm{dist}_{\mathbb R^d}(x,{M})<\delta(1-t) \})\big)dt.
\end{eqnarray*}
Denote by
$$E_\delta=\Big\{x\in \mathbb{R}^d: 0<\mathrm{dist}_{\mathbb
R^d}(x,{M})<\delta(1-t) \Big\}, $$ and by $\mathcal {F}'$  the class of all
functions $\varphi'=(\varphi'_1,\varphi'_2,\cdots,\varphi'_{d})\in
C^1_c(B(0,R);\mathbb R^{d})$  such that $$\parallel
\varphi\parallel_{\infty}=\sup_{x\in B(0,R)}(\mid
\varphi'_1(x)\mid^2+\cdots+\mid \varphi'_{d}(x)\mid^2)^{1/2}\le 1.$$
Then \begin{eqnarray*}
P_{\mathcal{H}_{\alpha}}(E_\delta)=\|\nabla_{\mathcal{H}_{\alpha}}
1_{E_\delta}\|(B(0,R))&=&\sup_{\varphi\in \mathcal {F}}\Big\{\int_{E_\delta}
\mathrm{div}_{\mathcal{H}_{\alpha}}\varphi(x)dx\Big\}\\
&\lesssim& \sup_{\varphi\in \mathcal {F}'}\Big\{\int_{E_\delta} \mathrm{div} \varphi'(x)dx\Big\}+ \sup_{\varphi\in \mathcal
{F}'}\Big\{\int_{E_\delta}
 x\cdot \varphi'(x)dx\Big\}\\
&\lesssim&
\Big(P(E_\delta)+\|x\|_{L^\infty(B(0,R))}|E_\delta|\Big)\rightarrow
0\end{eqnarray*} via letting $\delta\rightarrow 0$, where
$P(E_\delta)$ is the classical perimeter of $E_\delta$ and we also
have used  the fact on page 125 in \cite{Mazya}. Therefore, we know
that $P(E_\delta)\rightarrow 0$ when $\delta\rightarrow 0$. Hence,
$\parallel
\nabla_{\mathcal{H}_{\alpha}}f_\delta\parallel_{L^1}\rightarrow
P_{\mathcal{H}_{\alpha}}(M)$ when $\delta\rightarrow 0.$

We also have
\begin{eqnarray*}\parallel f_\delta\parallel_{L^1} &=&
 \frac{1}{\delta}\Big(\int^{\delta}_0 \Big|\Big\{x\in \mathbb{R}^d:
\mathrm{dist}_{\mathbb R^d}(x,{M})<s \Big\}\Big| ds\Big).
\end{eqnarray*}
Then  we  conclude that  $$\parallel
 f_\delta\parallel_{L^1}+
\parallel
 \nabla_{\mathcal{H}_{\alpha}}f_\delta\parallel_{L^1}\rightarrow |M|+P_{\mathcal{H}_{\alpha}}(M) $$ via letting $\delta\rightarrow 0$. Hence,  (\ref{eq8})
is valid.

Suppose (\ref{eq8}) is true for any connected compact set $M$ in $
\mathbb R^d$ with smooth boundary. By the  monotonicity of $
\mathrm{cap}(\cdot, \mathcal{BV}_{\mathcal{H}_{\alpha}}(\mathbb
R^d))$, we conclude that  $t\rightarrow
 \mathrm{cap}(\Omega_t(f), \mathcal{BV}_{\mathcal{H}_{\alpha}}(\mathbb R^d))$ is a decreasing function on
$[0,\infty)$. Then
\begin{eqnarray*}
t^{\frac{1}{d-1}}( \mathrm{cap}(\Omega_t(f), \mathcal{BV}_{\mathcal{H}_{\alpha}}(\mathbb R^d))^{\frac{d}{d-1}}&=&\Big[t \mathrm{cap}(\Omega_t(f), \mathcal{BV}_{\mathcal{H}_{\alpha}}(\mathbb R^d))\Big]^{\frac{1}{d-1}} \mathrm{cap}(\Omega_t(f), \mathcal{BV}_{\mathcal{H}_{\alpha}}(\mathbb R^d))\\
&\le& \Big(\int^t_0   \mathrm{cap}(\Omega_r(f), \mathcal{BV}_{\mathcal{H}_{\alpha}}(\mathbb R^d))dr\Big)^{\frac{1}{d-1}} \mathrm{cap}(\Omega_t(f), \mathcal{BV}_{\mathcal{H}_{\alpha}}(\mathbb R^d))\\
&=&(1-1/d)\frac{d}{dt}\Big(\int^t_0
  \mathrm{cap}(\Omega_r(f), \mathcal{BV}_{\mathcal{H}_{\alpha}}(\mathbb R^d))dr\Big)^{d/(d-1)}.
\end{eqnarray*}

Via (\ref{eq8}) and the above estimate,   we have
\begin{eqnarray*}
&&\int^\infty_0 ( \mathrm{cap}(\Omega_t(f),
\mathcal{BV}_{\mathcal{H}_{\alpha}}(\mathbb
R^d)))^{d/(d-1)}dt^{d/(d-1)}\\
&&\quad=
\frac{d}{d-1}\int^\infty_0
( \mathrm{cap}(\Omega_t(f), \mathcal{BV}_{\mathcal{H}_{\alpha}}(\mathbb R^d)))^{d/(d-1)}t^{{1}/{(d-1)}}dt\\
&&\quad\le \int^\infty_0 \Big[\frac{d}{dt}\Big(\int^t_0
  \mathrm{cap}(\Omega_r(f), \mathcal{BV}_{\mathcal{H}_{\alpha}}(\mathbb R^d))dr\Big)^{d/(d-1)}\Big]dt\\
&&\quad= \Big(\int^\infty_0  \mathrm{cap}(\Omega_t(f),
\mathcal{BV}_{\mathcal{H}_{\alpha}}(\mathbb R^d))
dt\Big)^{d/(d-1)}\\
&&\quad\le  \Big(\int^\infty_0
|\Omega_t(f)|+P_{\mathcal{H}_{\alpha}}(\Omega_t(f))
dt\Big)^{d/(d-1)}\\
&&\quad\approx \Big(\int_{\mathbb R^d}|f|+|
\nabla_{\mathcal{H}_{\alpha}}f|dx\Big)^{d/(d-1)},
\end{eqnarray*} where
we have used the co-area formula (\ref{eq2.8}) in the last step.

Similarly, since   (\ref{eq7}) is equivalent to (\ref{eq8}), it
suffices to check that (\ref{eq8}) is valid for any connected
compact set $M$ in $ \mathbb R^d$ with smooth boundary.   (ii) of
Theorem \ref{lemm4} implies that (\ref{eq8}) is valid.

\end{proof}

\section{$\alpha$-Hermite mean curvature}\label{sec-5}
In this section  we focus on the question whether  every set of
finite restricted $\alpha$-Hermite perimeter in $\mathbb R^{d}$ has
mean curvature in $L^{1}(\mathbb R^{d})$. For the classical case,
please refer to \cite{Barozzi} for the details.

For a given $u\in L^{1}(\mathbb R^{d})$, the Massari type functional
corresponding to the restricted $\alpha$-Hermite perimeter  is defined as
\begin{equation}\label{eqq5.1}\mathscr{F}_{u,
\mathcal{H}_{\alpha}}(E):=\widetilde{P}_{\mathcal{H}_{\alpha}}(E)+\int_{E}u(x)dx,\end{equation}
where $E$ is an arbitrary set of finite restricted $\alpha$-Hermite perimeter in $\mathbb
R^{d}$.

\begin{theorem}\label{thm5-1}
For every set $E$ of finite restricted $\alpha$-Hermite perimeter in $\mathbb
R^{d}$, there exists a function $u\in L^{1}(\mathbb R^{d})$ such
that
$$\mathscr{F}_{u, \mathcal{H}_{\alpha}}(E)\leq \mathscr{F}_{u, \mathcal{H}_{\alpha}}(F)$$
holds for every set $F$ of finite restricted $\alpha$-Hermite perimeter in
$\mathbb R^{d}$.
\end{theorem}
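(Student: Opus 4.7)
The plan is to adapt the penalization/obstacle-problem strategy of Barozzi--Gonzalez--Tamanini \cite{Barozzi} to the restricted $\alpha$-Hermite setting. The crucial feature that makes this adaptation possible is Lemma~\ref{lem-5.1}, which supplies the complement symmetry $\widetilde{P}_{\mathcal{H}_{\alpha}}(F)=\widetilde{P}_{\mathcal{H}_{\alpha}}(F^c)$; the counterexample (\ref{eq-converse-example}) together with Remark~\ref{rem2.13} shows that the \emph{unrestricted} perimeter $P_{\mathcal{H}_{\alpha}}$ does not enjoy this symmetry, which is precisely the obstruction that forces the switch from $P_{\mathcal{H}_{\alpha}}$ to $\widetilde{P}_{\mathcal{H}_{\alpha}}$ in this section.

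Fix $E$ with $\widetilde{P}_{\mathcal{H}_{\alpha}}(E)<\infty$. For each positive integer $h$, I introduce the penalized functional
$$\mathscr{J}_h(F):=\widetilde{P}_{\mathcal{H}_{\alpha}}(F)+h|F\triangle E|,$$
minimized over measurable sets $F\subseteq \mathbb R^d$ of finite restricted $\alpha$-Hermite perimeter. Using the $L^1_{\loc}$ precompactness of characteristic functions with uniformly bounded $\widetilde{P}_{\mathcal{H}_{\alpha}}$ combined with the lower semicontinuity of Lemma~\ref{le-1.17}, the direct method produces a minimizer $E_h$. Comparing $\mathscr{J}_h(E_h)$ with $\mathscr{J}_h(E)=\widetilde{P}_{\mathcal{H}_{\alpha}}(E)$ yields the a priori bound $|E_h\triangle E|\le h^{-1}\widetilde{P}_{\mathcal{H}_{\alpha}}(E)$, so $E_h\to E$ in $L^1(\mathbb R^d)$ and $\widetilde{P}_{\mathcal{H}_{\alpha}}(E_h)\to \widetilde{P}_{\mathcal{H}_{\alpha}}(E)$ as $h\to\infty$.

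From the minimality of $E_h$ one derives, for any competitor $G$ of finite restricted $\alpha$-Hermite perimeter,
$$\widetilde{P}_{\mathcal{H}_{\alpha}}(E_h)-\widetilde{P}_{\mathcal{H}_{\alpha}}(G)\le h|G\triangle E|-h|E_h\triangle E|.$$
Testing successively with $G=E_h\cup F$ and $G=E_h\setminus F$ for arbitrary $F$, and controlling cross-terms by Lemma~\ref{le-1.18}, one extracts a bounded function $u_h:\mathbb R^d\to[-h,h]$ supported in $E_h\triangle E$ (taking values $\pm h$ on $E\setminus E_h$ and $E_h\setminus E$ respectively) such that $E_h$ minimizes $F\mapsto\widetilde{P}_{\mathcal{H}_{\alpha}}(F)+\int_F u_h\,dx$ globally. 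Passing $h\to\infty$ through the Euler--Lagrange inequality using the $L^1$-convergence $1_{E_h}\to 1_E$ and Lemma~\ref{le-1.17} is then what should deliver $\mathscr{F}_{u,\mathcal{H}_{\alpha}}(E)\le \mathscr{F}_{u,\mathcal{H}_{\alpha}}(F)$ for every $F$.

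The main obstacle is the uniform $L^1$ estimate $\|u_h\|_{L^1(\mathbb R^d)}\lesssim \widetilde{P}_{\mathcal{H}_{\alpha}}(E)$, without which the curvature candidate $u_h$ may concentrate and fail to have an $L^1$-weak limit. In \cite{Barozzi} this bound is obtained by comparing $E_h$ simultaneously with $E$ and with $E^c$ (equivalently, bounding $|E\setminus E_h|$ and $|E_h\setminus E|$ separately) and collapsing the two contributions via $P(F)=P(F^c)$; Lemma~\ref{lem-5.1} supplies exactly this ingredient for $\widetilde{P}_{\mathcal{H}_{\alpha}}$, provided one first checks that the constrained test-field class $\mathcal{F}_R$ of Definition~\ref{def-5.1} is closed under the truncations and symmetric cut-offs used in the comparison argument (this is the main technical verification, since the constraint (\ref{equa5.1}) is not a priori preserved by multiplying test fields with cut-offs). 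Once the uniform bound is secured, a standard diagonal extraction yields $u_h\to u\in L^1(\mathbb R^d)$ weakly (and a.e.\ along a subsequence), and the passage to the limit described above finishes the proof.
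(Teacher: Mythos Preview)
Your scheme diverges from the paper's and contains a genuine gap at the final step. The paper does \emph{not} pass to a weak limit of approximate curvatures. Instead, it runs a \emph{one-sided} obstacle problem: it fixes a strictly positive weight $h\in L^1(E)$, sets $\Lambda(F)=\int_F h$, and for an increasing sequence $\lambda_i\uparrow\infty$ with $\lambda_{i+1}-\lambda_i\le c$ minimizes
\[
\mathscr{F}_{\lambda_i}(F)=\widetilde{P}_{\mathcal{H}_\alpha}(F)+\lambda_i\,\Lambda(E\setminus F)\qquad\text{over }F\subset E.
\]
The minimizers $E_i$ form a nested increasing family exhausting $E$, and the curvature is defined \emph{once and for all} on the shells: $u:=-\lambda_i h$ on $E_i\setminus E_{i-1}$, $u:=0$ outside $E$. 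The $L^1$ bound $\int|u|\le c\,\Lambda(E)+\widetilde{P}_{\mathcal{H}_\alpha}(E)$ comes from telescoping $\sum_i\lambda_i\Lambda(E_{i+1}\setminus E_i)\le\widetilde{P}_{\mathcal{H}_\alpha}(E)$, and the minimality of $E$ for $\mathscr{F}_{u,\mathcal{H}_\alpha}$ among $F\subset E$ is proved by an induction on $i$ and a limit in $i$ using Lemma~\ref{le-1.17}. The complement symmetry of Lemma~\ref{lem-5.1} is used only at the very end, to upgrade ``$F\subset E$'' to ``$E\subset F$'' and then (via Lemma~\ref{le-1.18}) to arbitrary $F$.

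Your step ``a standard diagonal extraction yields $u_h\to u\in L^1(\mathbb R^d)$ weakly'' is exactly where the argument breaks. A uniform $L^1$ bound does \emph{not} give weak $L^1$ sequential compactness; by Dunford--Pettis one needs equi-integrability, and your $u_h$ are precisely the opposite: $|u_h|=h$ on a set of measure $\le h^{-1}\widetilde{P}_{\mathcal{H}_\alpha}(E)$, so the family concentrates. The only limit you can extract is weak-$*$ as signed measures, which would give a curvature \emph{measure}, not an $L^1$ function, and the theorem asserts the latter. This is exactly the difficulty the layered construction in \cite{Barozzi} (and in the paper) is designed to avoid: by nesting the $E_i$ and freezing $u$ on each shell, one never has to take a limit of curvature candidates. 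Your concern about whether $\mathcal{F}_R$ is stable under cut-offs is, by contrast, a non-issue here: all that is needed are Lemmas~\ref{lem-5.1}, \ref{le-1.17} and \ref{le-1.18}, which are already available.
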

\begin{proof} Although the result under $\alpha=1$ goes back to the result of \cite{Barozzi} and may be treated as an application of \cite[Theorem 3.1]{BM}, it is still of some interest to present a demonstration.

At first,  for the given set $E$, we need to find a  function $u\in
L^{1}(\mathbb R^{d})$   such that
\begin{equation}\label{eq5.1}
\mathscr{F}_{u, \mathcal{H}_{\alpha}}(E)\leq \mathscr{F}_{u,
\mathcal{H}_{\alpha}}(F)
\end{equation}
holds for every $F$ with either $F\subset E$ or $E\subset F$, then
Theorem \ref{thm5-1} is proved, i.e. (\ref{eq5.1}) holds for every
$F\subset \mathbb R^{d}$. In fact,  by adding the inequalities
(\ref{eq5.1}) corresponding to the test sets $E\cap F$ and $E\cup
F$, we get
$$\begin{cases}
\widetilde{P}_{\mathcal{H}_{\alpha}}(E)+\int_{E}u(x)dx\leq \widetilde{P}_{\mathcal{H}_{\alpha}}(E\cap F)+\int_{E\cap F}u(x)dx;\\
\widetilde{P}_{\mathcal{H}_{\alpha}}(E)+\int_{E}u(x)dx\leq
\widetilde{P}_{\mathcal{H}_{\alpha}}(E\cup F)+\int_{E\cup F}u(x)dx.
\end{cases}$$
Then  noting that
\begin{equation*}\label{eq5.3}
\widetilde{P}_{\mathcal{H}_{\alpha}}(E\cap
F)+\widetilde{P}_{\mathcal{H}_{\alpha}}(E\cup F)\le
\widetilde{P}_{\mathcal{H}_{\alpha}}(E)+\widetilde{P}_{\mathcal{H}_{\alpha}}(F),
\end{equation*}
we can get
\begin{eqnarray*}
2\widetilde{P}_{\mathcal{H}_{\alpha}}(E)+2\int_{E}u(x)dx&\leq&\widetilde{P}_{\mathcal{H}_{\alpha}}(E\cap F)+\widetilde{P}_{\mathcal{H}_{\alpha}}(E\cup F)+\int_{E\cap F}u(x)dx+\int_{E\cup F}u(x)dx\\
&\leq&\widetilde{P}_{\mathcal{H}_{\alpha}}(E)+\widetilde{P}_{\mathcal{H}_{\alpha}}(F)+\int_{E}u(x)dx+\int_{F}u(x)dx,
\end{eqnarray*}
that is, (\ref{eq5.1}) holds for arbitrary $F$. Also, if
(\ref{eq5.1}) holds for $F\subset E$, then   for the sets $F$ such
that $E\subset F$, i.e.   $F^c\subset E^c$,
\begin{eqnarray*}
\widetilde{P}_{\mathcal{H}_{\alpha}}(E)+\int_{E}u(x)dx&=&\widetilde{P}_{\mathcal{H}_{\alpha}}(E^c)+\int_{E^c}u(x)dx-\int_{E^c}u(x)dx+\int_{E}u(x)dx\\
&\leq& \widetilde{P}_{\mathcal{H}_{\alpha}}(F^c)+\int_{F^c}u(x)dx-\int_{E^c}u(x)dx+\int_{E}u(x)dx\\
&=& \widetilde{P}_{\mathcal{H}_{\alpha}}(F)+\int_{F^c}u(x)dx-\int_{E^c}u(x)dx+\int_{E}u(x)dx\\
&=&\mathscr{F}_{u,
\mathcal{H}_{\alpha}}(F)-\int_{F}u(x)dx+\int_{F^c}u(x)dx-\int_{E^c}u(x)dx+\int_{E}u(x)dx\\
&=&\mathscr{F}_{u, \mathcal{H}_{\alpha}}(F)-\int_{F/
E}u(x)dx-\int_{E^C/ F^C}u(x)dx\\
&=&\mathscr{F}_{u, \mathcal{H}_{\alpha}}(F),\end{eqnarray*} where we
have used the fact that $u(\cdot)$ vanishes outside the set $E$.
Hence, we only need to prove that   $u$ defined on $E$ is integrable
and (\ref{eq5.1}) holds for any $F\subset E$.

{\it Step I.} Denote by  $h(\cdot)$   a measurable function
satisfying that $h>0$ on $E$ and $\int_{E}h(x)dx<\infty$, and denote
by $\Lambda$ the (positive and totally finite) measure:
\begin{equation*}
\Lambda(F)=\int_{F}h(x)dx,\ F\subset E.
\end{equation*}
It is obvious that  $\Lambda(F)=0$ if and only if $|F|=0$. For
$\lambda>0$ and $F\subset E$, consider the functional
$$\mathscr{F}_{\lambda}(F):=\widetilde{P}_{\mathcal{H}_{\alpha}}(F)+\lambda\Lambda(E\setminus F).$$
It is well known that  every minimizing sequence is compact in
$L^{1}_{loc}(\mathbb R^{d})$ and the functional is
lower-semicontinuous with respect to the same convergence. Hence, we
conclude that, for every $\lambda>0$, a solution $E_{\lambda}$ to
the problem:
$$\mathscr{F}_{\lambda}(F)\rightarrow \text{min},\ F\subset E.$$
Choose a sequence $\{\lambda_{i}\}$ of positive numbers, strictly
increasing to $\infty$, and denote the corresponding solutions by
$E_{i}\equiv E_{\lambda_{i}}$, so that $\forall i\geq 1$:
\begin{equation}\label{eq5.2}
\mathscr{F}_{\lambda_{i}}(E_{i})\leq \mathscr{F}_{\lambda_{i}}(F)\ \forall\ F\subset E.
\end{equation}
Given $i<j$. Let $F=E_{i}\cap E_{j}$. It follows from (\ref{eq5.2}) that $$\mathscr{F}_{\lambda_{i}}(E_{i})\leq \mathscr{F}_{\lambda_{i}}(E_{i}\cap E_{j}),
$$ that is,
$$\widetilde{P}_{\mathcal{H}_{\alpha}}(E_{i})+\lambda_{i}\Lambda(E\setminus E_{i})
\leq \widetilde{P}_{\mathcal{H}_{\alpha}}(E_{i}\cap
E_{j})+\lambda_{i}\Lambda(E\setminus(E_{i}\cap E_{j})),$$ which
implies
$$\widetilde{P}_{\mathcal{H}_{\alpha}}(E_{i})+\lambda_{i}\int_{E\setminus E_{i}}h(x)dx\leq \widetilde{P}_{\mathcal{H}_{\alpha}}(E_{i}\cap E_{j})+\lambda_{i}\int_{E\setminus(E_{i}\cap E_{j})}h(x)dx.$$
A direct computation gives
$$\widetilde{P}_{\mathcal{H}_{\alpha}}(E_{i})\leq \lambda_{i}\int_{E_{i}\setminus E_{j}}h(x)dx+ \widetilde{P}_{\mathcal{H}_{\alpha}}(E_{i}\cap E_{j}).$$
On the other hand, taking $F= E_{i}\cup E_{j}\subset E$ in (\ref{eq5.2}), we can get
 $\mathscr{F}_{\lambda_{j}}(E_{j})\leq \mathscr{F}_{\lambda_{j}}(E_{i}\cup E_{j})$.
 Hence,
$$\widetilde{P}_{\mathcal{H}_{\alpha}}(E_{j})+\lambda_{j}\int_{E\setminus E_{j}}h(x)dx\leq \widetilde{P}_{\mathcal{H}_{\alpha}}(E_{i}\cup E_{j})
+\lambda_{j}\int_{E\setminus(E_{i}\cup E_{j})}h(x)dx,$$
equivalently,
$$\widetilde{P}_{\mathcal{H}_{\alpha}}(E_{j})+\lambda_{j}\int_{E_{i}\setminus E_{j}}h(x)dx\leq \widetilde{P}_{\mathcal{H}_{\alpha}}(E_{i}\cup E_{j})$$
which implies that
$$\widetilde{P}_{\mathcal{H}_{\alpha}}(E_{i})+\widetilde{P}_{\mathcal{H}_{\alpha}}(E_{j})+\lambda_{j}\int_{E_{i}\setminus E_{j}}h(x)dx\leq \widetilde{P}_{\mathcal{H}_{\alpha}}(E_{i}\cup E_{j})+\lambda_{i}\int_{E_{i}\setminus E_{j}}h(x)dx+ \widetilde{P}_{\mathcal{H}_{\alpha}}(E_{i}\cap E_{j}).$$
Recall that $h>0$. The above estimate, together with (\ref{eq5.3}) and the facts that $\lambda_{i}<\lambda_{j}$, indicates that
$$(\lambda_{j}-\lambda_{i})\Lambda(E_{i}\setminus E_{j})=(\lambda_{j}-\lambda_{i})\int_{E_{i}\setminus E_{j}}h(x)dx=0,$$
that is, $E_{i}\subset E_{j}$ and the sequence of minimizers $\{E_{i}\}$ is increasing. On the other hand, letting $F=E$, we get
$$\widetilde{P}_{\mathcal{H}_{\alpha}}(E_{i})+\lambda_{i}\Lambda(E\setminus E_{i})\leq \widetilde{P}_{\mathcal{H}_{\alpha}}(E)+
\lambda_{i}\Lambda(E\setminus
E)=\widetilde{P}_{\mathcal{H}_{\alpha}}(E)\ \forall i\geq 1,$$ which
deduces that $E_{i}$ converges monotonically and in
$L^{1}_{loc}(\mathbb R^{d})$ to $E$. Via Lemma \ref{le-1.17}, we get
$$\begin{cases}
\widetilde{P}_{\mathcal{H}_{\alpha}}(E)\leq\lim\inf\limits_{i\rightarrow \infty}\widetilde{P}_{\mathcal{H}_{\alpha}}(E_{i})\leq \widetilde{P}_{\mathcal{H}_{\alpha}}(E),\\
\widetilde{P}_{\mathcal{H}_{\alpha}}(E)\leq\lim\inf\limits_{i\rightarrow
\infty}\widetilde{P}_{\mathcal{H}_{\alpha}}(E_{i})\leq\lim\sup\limits_{i\rightarrow
\infty}\widetilde{P}_{\mathcal{H}_{\alpha}}(E_{i})\leq
\widetilde{P}_{\mathcal{H}_{\alpha}}(E),
\end{cases}$$
which means
\begin{equation}\label{eq5.4}
\widetilde{P}_{\mathcal{H}_{\alpha}}(E)=\lim_{i\rightarrow
\infty}\widetilde{P}_{\mathcal{H}_{\alpha}}(E_{i}).
\end{equation}

{\it Step II.} Let $\lambda_{0}=0$ and $E_{0}=\emptyset$, and define
$$u(x)=
\begin{cases}
-\lambda_{i}\cdot h(x),&\ x\in E_{i}\backslash E_{i-1}, i\geq1;\\
0,&\ \text{otherwise}.
\end{cases}$$
Clearly, $u$ is negative almost everywhere on $E$, and
\begin{eqnarray*}
\int_{\mathbb R^{d}}|u(x)|dx&=&\int_{\cup^{\infty}_{i=0}E_{i+1}\backslash E_{i}}|u(x)|dx\\
&=&\sum^{\infty}_{i=0}\int_{E_{i+1}\backslash E_{i}}\lambda_{i+1}\cdot h(x)dx\\
&=&\sum^{\infty}_{i=0}\lambda_{i+1}\Lambda(E_{i+1}\backslash E_{i}).
\end{eqnarray*}
In (\ref{eq5.2}), taking $F=E_{i+1}$, we have
$$\widetilde{P}_{\mathcal{H}_{\alpha}}(E_{i})+\lambda_{i}\Lambda(E\setminus E_{i})\leq \widetilde{P}_{\mathcal{H}_{\alpha}}(E_{i+1})+\lambda_{i}\Lambda(E\setminus E_{i+1}),$$
that is, for every $i\geq 0$,
$$\lambda_{i}\Lambda(E_{i+1}\backslash E_{i})\leq \widetilde{P}_{\mathcal{H}_{\alpha}}(E_{i+1})-\widetilde{P}_{\mathcal{H}_{\alpha}}(E_{i}).$$
Then for sufficiently large $N$,
$$\sum^{N}_{i=0}\lambda_{i}\Lambda(E_{i+1}\backslash E_{i})\leq \sum^{N}_{i=0}\Big[\widetilde{P}_{\mathcal{H}_{\alpha}}(E_{i+1})-\widetilde{P}_{\mathcal{H}_{\alpha}}(E_{i})\Big]=\widetilde{P}_{\mathcal{H}_{\alpha}}(E_{N})
-\widetilde{P}_{\mathcal{H}_{\alpha}}(E_{0})=\widetilde{P}_{\mathcal{H}_{\alpha}}(E_{N}).$$
Letting $N\rightarrow \infty$, (\ref{eq5.4}) indicates that
\begin{equation*}
\sum^{\infty}_{i=0}\lambda_{i}\Lambda(E_{i+1}\backslash E_{i})\leq
\widetilde{P}_{\mathcal{H}_{\alpha}}(E).
\end{equation*}
We make the additional assumption that $0<\lambda_{i+1}-\lambda_{i}\leq c, i\geq 0$, where $c$ is a constant independent of $i$.
Then for any $N>0$,
\begin{eqnarray*}
\sum^{N}_{i=0}(\lambda_{i+1}-\lambda_{i})\Lambda(E_{i+1}\backslash E_{i})&\leq&c\sum^{N}_{i=0}\Lambda(E_{i+1}\backslash E_{i})\\
&=&c \sum^{N}_{i=0}\int_{E_{i+1}\backslash E_{i}}h(x)dx\\
&=&c\int_{\cup^{N}_{i=0}(E_{i+1}\backslash E_{i})}h(x)dx,
\end{eqnarray*}
which gives
$$\sum^{\infty}_{i=0}(\lambda_{i+1}-\lambda_{i})\Lambda(E_{i+1}\backslash E_{i})\leq c\Lambda(E).$$
Then
\begin{eqnarray*}
\int_{\mathbb R^{d}}|u(x)|dx&=&\sum^{\infty}_{i=0}\lambda_{i+1}\Lambda(E_{i+1}\backslash E_{i})\\
&=&\sum^{\infty}_{i=0}(\lambda_{i+1}-\lambda_{i})\Lambda(E_{i+1}\backslash E_{i})+\sum^{\infty}_{i=0}\lambda_{i}\Lambda(E_{i+1}\backslash E_{i})\\
&\leq&c\Lambda(E)+\widetilde{P}_{\mathcal{H}_{\alpha}}(E)<\infty.
\end{eqnarray*}
In conclusion, $u\in L^{1}(\mathbb R^{d})$.

{\it Step III.} We claim that for every $i\geq 1$ the inequality
\begin{equation}\label{eq5.5}
\widetilde{P}_{\mathcal{H}_{\alpha}}(E_{i})\leq
\widetilde{P}_{\mathcal{H}_{\alpha}}(F)+\sum^{i}_{j=1}\lambda_{j}\Lambda((E_{j}\backslash
E_{j-1})\backslash F)
\end{equation}
holds for any $F\subset E$.

For $i=1$, $E_{i-1}=E_{0}=\emptyset$. Then (\ref{eq5.5}) becomes
$$\widetilde{P}_{\mathcal{H}_{\alpha}}(E_{1})\leq \widetilde{P}_{\mathcal{H}_{\alpha}}(F)+\lambda_{1}\Lambda(E_{1}\backslash F),$$
which coincides with (\ref{eq5.2}) for $i=1$.

Now we assume that (\ref{eq5.5}) holds for a fixed $i\geq 1$ and
every $F\subset E$. Take $F\cap E_{i}$ as a test set. Note that
$\{E_{j}\}$ is increasing. It is easy to see that $$(E_{j}\backslash
E_{j-1})\backslash (F\cap E_{i})=(E_{j}\backslash E_{j-1})\backslash
F).
$$ Then
\begin{eqnarray*}
\widetilde{P}_{\mathcal{H}_{\alpha}}(E_{i})&\leq& \widetilde{P}_{\mathcal{H}_{\alpha}}(F\cap E_{i})+\sum^{i}_{j=1}\lambda_{j}\Lambda((E_{j}\backslash E_{j-1})\backslash (F\cap E_{i}))\\
&=&\widetilde{P}_{\mathcal{H}_{\alpha}}(F\cap
E_{i})+\sum^{i}_{j=1}\lambda_{j}\Lambda((E_{j}\backslash
E_{j-1})\backslash F).
\end{eqnarray*}
On the other hand, $E_{i+1}$ is a minimizer of $\mathscr{F}_{\lambda_{i+1}}$. Hence,
$$\mathscr{F}_{\lambda_{i+1}}(E_{i+1})\leq \mathscr{F}_{\lambda_{i+1}}(F\cup E_{i}),$$
and noticing that $$E\backslash E_{i}=(E\backslash E_{i+1})\cup (E_{i+1}\backslash E_{i}),$$ we can get
$$E\backslash(F\cup E_{i})= ((E\backslash E_{i+1})\backslash F)\cup ((E_{i+1}\backslash E_{i})\backslash F).$$
This gives
\begin{eqnarray*}
\widetilde{P}_{\mathcal{H}_{\alpha}}(E_{i+1})+\lambda_{i+1}\Lambda(E\backslash E_{i+1})&\leq&\widetilde{P}_{\mathcal{H}_{\alpha}}(F\cup E_{i})+\lambda_{i+1}\Lambda(E\backslash(F\cup E_{i}))\\
&\leq&\widetilde{P}_{\mathcal{H}_{\alpha}}(F\cup
E_{i})+\lambda_{i+1}\Lambda((E\backslash E_{i+1})\backslash
F)+\lambda_{i+1}\Lambda((E_{i+1}\backslash E_{i})\backslash F).
\end{eqnarray*}
Therefore, we obtain that
\begin{eqnarray*}
&&\widetilde{P}_{\mathcal{H}_{\alpha}}(E_{i})+\widetilde{P}_{\mathcal{H}_{\alpha}}(E_{i+1})+\lambda_{i+1}\Lambda(E\backslash E_{i+1})\\
&&\leq \widetilde{P}_{\mathcal{H}_{\alpha}}(F\cap E_{i})+\sum^{i}_{j=1}\lambda_{j}\Lambda((E_{j}\backslash E_{j-1})\backslash F)\\
&&+\widetilde{P}_{\mathcal{H}_{\alpha}}(F\cup E_{i})+\lambda_{i+1}\Lambda((E\backslash E_{i+1})\backslash F)+\lambda_{i+1}\Lambda((E_{i+1}\backslash E_{i})\backslash F)\\
&&\leq \widetilde{P}_{\mathcal{H}_{\alpha}}(E_{i})+\widetilde{P}_{\mathcal{H}_{\alpha}}(F)+\sum^{i+1}_{j=1}\lambda_{j}\Lambda((E_{j}\backslash E_{j-1})\backslash F)+\lambda_{i+1}\Lambda((E\backslash E_{i+1})\backslash F)\\
&&\leq
\widetilde{P}_{\mathcal{H}_{\alpha}}(E_{i})+\widetilde{P}_{\mathcal{H}_{\alpha}}(F)+\sum^{i+1}_{j=1}\lambda_{j}\Lambda((E_{j}\backslash
E_{j-1})\backslash F)+\lambda_{i+1}\Lambda(E\backslash E_{i+1}),
\end{eqnarray*}
that is, (\ref{eq5.5}) holds for $i+1$. Finally,
\begin{eqnarray*}
\widetilde{P}_{\mathcal{H}_{\alpha}}(E)&=&\lim_{i\rightarrow\infty}\widetilde{P}_{\mathcal{H}_{\alpha}}(E_{i})\\
&\leq& \widetilde{P}_{\mathcal{H}_{\alpha}}(F)+\lim_{i\rightarrow\infty}\sum^{i}_{j=1}\lambda_{j}\Lambda((E_{j}\backslash E_{j-1})\backslash F)\\
&=&\widetilde{P}_{\mathcal{H}_{\alpha}}(F)-\int_{\cup^{\infty}_{i=0}(E_{j}\backslash E_{j-1})\backslash F}u(x)dx\\
&=&\widetilde{P}_{\mathcal{H}_{\alpha}}(F)-\int_{E\backslash F}u(x)dx,
\end{eqnarray*}
 which gives (\ref{eq5.2}).

\end{proof}

\begin{remark}\label{remark-5.1}
In Definition \ref{equa5.1}, taking $\alpha=1$, it is obvious that
(\ref{equa5.1}) holds for all $$\varphi\in C^1_0(\mathbb
R^{d};\mathbb R^{2d}) \ \text{with}\ \ \|\varphi\|_{\infty}\leq 1,
$$
namely,
$
\mathcal{F}_R(\mathbb R^{d})=\mathcal{F}(\mathbb R^{d}).
$

\end{remark}

\vspace{0.5cm}

\noindent{\bf Acknowledgements}. The authors are grateful for Prof. Jie Xiao for many helpful and useful suggestions. In addition:

\begin{itemize}
\item { Jizhen Huang was supported  by
Fundamental Research Funds for the Central Universities (\#
500419772)}.

\item {Pengtao Li was in part supported by National Natural Science Foundation of
China (\# 11871293\ \&\ \# 11571217) \& Shandong Natural Science
Foundation of China (\# ZR2017JL008, \# ZR2016AM05)}.

\item {Yu Liu was supported by National
Natural Science Foundation of China (\# 11671031) \& Beijing
Municipal Science and Technology Project (\# Z17111000220000). }

\end{itemize}

\end{document}